\numberwithin{equation}{section}
\theoremstyle{plain}
\newtheorem{theorem}{Theorem}[section]
\newtheorem{lemma}[theorem]{Lemma}
\newtheorem{proposition}[theorem]{Proposition}
\newtheorem{corollary}[theorem]{Corollary}
\theoremstyle{definition}
\newtheorem{definition}[theorem]{Definition}
\newtheorem{example}[theorem]{Example}
\theoremstyle{remark}
\newtheorem{remark}[theorem]{Remark}
\newcommand{\Z}{\mathbb{Z}}
\newcommand{\I}{\mathbb{I}}
\newcommand{\Pin}{\operatorname{Pin}}
\newcommand{\Peak}{\operatorname{Peak}}
\newcommand{\p}{\mathfrak{p}}
\newcommand\APS{\mathsf{APS}}
\newcommand{\ceil}[1]{\left\lceil#1\right\rceil}
\newcommand{\floor}[1]{\left\lfloor#1\right\rfloor}
\newcommand{\newword}[1]{\emph{\textbf{#1}}}
\title{Pinnacles for Complex Reflection Groups}
\author{Aaron Burnham-Schmidt}
\address[A. Burnham-Schmidt]{Department of Mathematics, University of California, Berkeley, CA,  USA}
\email{\textcolor{blue}{\href{mailto:afburnham@berkeley.edu}{afburnham@berkeley.edu}}}
\thanks{}
\author{Nicolle Gonz\'{a}lez}
\address[N. Gonz\'{a}lez] {Department of Mathematics,
University of British Columbia, Vancouver, BC, Canada }
\address{Institute for Computational and Experimental Research in Mathematics, Providence, RI, USA}
\email{\textcolor{blue}{\href{mailto:nicolle\_sandoval\_gonzalez@brown.edu}{nicolle\_sandoval\_gonzalez@brown.edu}}}
\thanks{The authors thank John Lentfer for helpful conversations}
\begin{document}
\begin{abstract}
We study, characterize, and enumerate the admissible pinnacle sets of nonexceptional complex reflection groups $G(m,p,n)$, which include all generalized symmetric groups $\Z_m \wr S_n$ as special cases. 
This generalizes the work of Davis--Nelson--Petersen--Tenner for symmetric groups $S_n$ and Gonz\'alez--Harris--Rojas Kirby--Smit Vega Garcia--Tenner for signed symmetric groups $\Z_2 \wr S_n$.
As a consequence, we prove a conjecture of Gonz\'alez--Harris--Rojas Kirby--Smit Vega Garcia--Tenner for pinnacles of signed permutations.
\end{abstract}

\maketitle

%\tableofcontents
\section{Introduction}
\emph{Complex reflection groups} are an important family of finite groups that contain all Coxeter groups and symmetry groups of regular polyhedra as special cases. These groups arise as automorphism groups of complex vector spaces and have been central to many modern developments in algebra and combinatorics. In their groundbreaking 1954 paper \cite{Shephard_Todd_1954}, Sheppard and Todd classified all irreducible complex reflection groups into either one of 34 exceptional cases or a member of the infinite family of groups $G(m,p,n)$ of order $m^n n!/p$, which depend on positive integers $m,p,n$ with $p$ dividing $m$. 
These nonexceptional complex reflection groups $G(m,p,n)$ are precisely the reflection groups for the complex polytopes considered in \cite[pg. 378]{Shep53}.
When $m=p=1$ the group $G(1,1,n)$ is the classical type $A_n$ symmetric group $S_n$. More generally, we have that:
\begin{itemize}
\item $G(m,1,n)$ is the \emph{generalized symmetric group} $\Z_m \wr S_n$.
\item $G(2,1,n)$ is the type $B_n$ \emph{signed symmetric group}.
\item $G(2,2,n)$ is the type $D_n$ \emph{signed symmetric group}.
\item $G(m,m,2)$ is the \emph{dihedral group} of order $2m$.
\item $G(2,2,2)$ is the \emph{Klein 4 group}.
\end{itemize}

In this paper we study and enumerate the admissible pinnacle sets of $G(m,n,p)$. The notion of a \emph{pinnacle set} was first studied by Davis--Nelson--Petersen--Tenner \cite{PinnaclesTypeA} in the context of $S_n$ and motivated by the work of Billey--Burdzy--Sagan \cite{BBS} on so-called \emph{peak sets}. Given a permutation $w = w(n)\dots w(1)$ the peak set of $w$ is given by
\[
\Peak(w):= \{ i \in \{1,\dots,n\} \; |\; w(i+1)<w(i)> w(i-1) \}.
\]
Similarly, the \newword{pinnacle set} of $w$ is the set of integers:
\[
\Pin(w):= \{ w(i) \in \{1,\dots,n\} \; |\; i \in \Peak(w) \}.
\]
Informally, if we view a permutation in $S_n$ as a function $w:\{1,\dots,n\} \to \{1,\dots,n\}$, then $\Peak(w)$ is the set of $x$-values and $\Pin(w)$ is the set of $y$-values of the local maxima of the function $w$.

When studying pinnacle sets, several questions naturally arise. The first is determining whether a given subset $P \subseteq \{1,\dots,n\}$ is \newword{admissible} in $S_n$; that is, determining whether there exists a permutation $w \in S_n$ with $\Pin(w)=P$ and if so giving a characterization of when this occurs. The second problem is, for a given $n$, enumerating the number of admissible pinnacle sets of a given cardinality.  

In \cite{PinnaclesTypeA} Davis--Nelson--Petersen--Tenner gave a complete solution to the questions above by providing a closed formula for the number of admissible pinnacle sets with a given maximal value and a recursive formula for the number of permutations with a given pinnacle set. Their work led to a flurry of results searching for more computationally efficient formulas \cite{DHHIN,FNT, DLMSSS, fang, Minnich} and other related properties \cite{rusu, rusu-tenner}. 

Pinnacle sets have also been extended to other finite groups. In \cite{PinSignedPermutations} the second author alongside Harris, Rojas Kirby, Smit Vega Garcia, and Tenner extended the notion of pinnacle sets to type $B_n$ and $D_n$ signed symmetric groups where they characterized and enumerated pinnacle sets for signed permutations. In \cite{MesasStirling} the same authors introduced the related notion of \emph{mesa sets} for Stirling permutations and gave similar enumeration results in terms of Catalan numbers. 
Independently, Peak sets have also been studied and generalized to various contexts \cite{CDOPZ,DHIP,DHIOS,DEHIMO}. 

In this article we give a complete characterization of admissible pinnacle sets and various recursive and closed enumeration formula for the number of admissible pinnacle sets of any given cardinality $d$ for all generalized symmetric groups and all complex reflection groups of the form $G(m,p,n)$. 

\subsection*{Pinnacles for Generalized Symmetric Groups} For any positive integers $m,n$, the \newword{generalized symmetric group} arises as the wreath product group $\Z_m \wr S_n$. Let $\APS_d(m,n)$ denote the collection of admissible pinnacle sets in $\Z_m \wr S_n$ of cardinality less than or equal to $d$. It is easy to see that any admissible set must have cardinality $d \leq \floor{(n-1)/2}$ (see Lemma \ref{lem:maxLength}). In order to enumerate these sets, we utilize certain natural embeddings of these sets and in Theorem \ref{thm: pinCharacter} and Corollary \ref{cor:pinCharacter} give two distinct complete characterizations of when a given subset is admissible in $\Z_m \wr S_n$. With this in hand, we then count $\#\APS_d(m,n)$ in four distinct ways: two recursive and two via closed formulas. 

The first recursion, found in Theorem \ref{thm:recursion}, vastly generalizes a similar recursion in \cite[Proposition 3.11]{PinSignedPermutations} and states that for any positive integers $m,n$ and $d\leq \floor{\frac{n-1}{2}}$:
\[
\#\APS_d(m,n) =\sum_{i=0}^d\binom{n}{i}\#\APS_{d-i}(m-1,n-i).
\]
This recursion leads us to the closed formula in Theorem \ref{thm:pinn-formula} expressing $\#\APS_d(m,n)$ as a partial binomial sum,
\[
\#\APS_d(m,n)= \sum_{i=0}^{d} \binom{n}{i}m^i(-1)^{i+d},
\]
or alternatively as a strictly positive sum in Corollary \ref{cor:pinformula} as,
\[
\#\APS_d(m,n)= \sum_{k=0}^d(m-1)^k \binom{n}{k}\binom{n-k-1}{d-k}.
    \]
 In \cite[Conjecture 5.1]{PinSignedPermutations} Gonz\'alez--Harris--Rojas Kirby--Smit Vega Garcia--Tenner proposed that for the signed symmetric group $\Z_2 \wr S_n$ the number of admissible pinnacle sets recovered the OEIS sequence A119258. In Theorem \ref{thm:recursion2} we prove the recursion below holds for all generalized symmetric groups 
 \[
 \#\APS_d(m,n) = m\;\#\APS_{d-1}(m,n-1)+\#\APS_d(m,n-1),
 \]
 thus verifying \cite[Conjecture 5.1]{PinSignedPermutations} as a special case when $m=2$. 

\subsection*{Pinnacles for Complex Reflection Groups} In order to extend these results to the nonexceptional complex reflection groups we utilize the fact that any $G(m,p,n)$ can be realized as a particular normal subgroup of $\Z_m \wr S_n = G(m,1,n)$. Then, appealing to similar techniques as those in \cite{PinSignedPermutations}, in Theorem \ref{thm:APS-complex-equal} we prove that the number of admissible pinnacle sets in $G(m,p,n)$ of cardinality less than or equal to $d$, denoted $\APS_d(m,p,n)$, coincides with those in $\Z_m \wr S_n $. Namely, for any $p|m$ and $d<\ceil{(n-1)/2}$ we show that:
\[
\APS_d(m,p,n) = \APS_d(m,n).
\]
Thus, any of the enumeration results for generalized symmetric groups given above also enumerate the number of admissible pinnacle sets for almost all (nonexceptional) complex reflection groups. 

The remaining case, when $d=\ceil{(n-1)/2}=\floor{(n-1)/2}$ is maximal, is only achieved when $n$ is odd: namely, when $n=2r+1$ and $d=r$. Unfortunately, this situation has proven to be much more resistant to our methods so we have been unable to provide a closed formula for $\#\APS_r(m,p,2r+1)$. Nonetheless, although we do not compute it explicitly, in Theorem \ref{thm:odd-reduction} and Corollary \ref{cor:reduction} we prove that 
\[
\#\APS_r(m,p,2r+1) = \#\APS_r(p,p,2r+1) + \sum_{i=0}^r\binom{2r+1}{i}(m^i-p^i)(-1)^{i+r}.
\]
Thus, we successfully reduce the computation of $\APS_r(m,p,2r+1)$ to the case when $m=p$.

\subsection*{Structure of the Paper} This article is organized as follows. In \S \ref{Sec:GeneralizedSymGroup} we introduce the necessary background and notation and define pinnacles and admissible pinnacle sets for elements in $\Z_m \wr S_n$. In \S\ref{subsec:characterization} we give the two characterizations of $\APS_d(m,n)$ via Theorem \ref{thm: pinCharacter} and Corollary \ref{cor:pinCharacter}. It is in \S\ref{subsec:enumeration} that we give the main enumeration results in Theorems \ref{thm:recursion}, \ref{thm:pinn-formula}, \ref{thm:recursion2}, and Corollary \ref{cor:pinformula}. Finally, in \S\ref{sec:ComplexReflGroup} we use the results from \S\ref{Sec:GeneralizedSymGroup} to enumerate the admissible pinnacle sets for complex reflection groups in Theorems \ref{thm:APS-complex-equal} and \ref{thm:odd-reduction} and Corollaries \ref{cor:APS-even} and \ref{cor:reduction}.

\section{Pinnacles for Generalized Symmetric Groups} \label{Sec:GeneralizedSymGroup}

For positive integers $m,n$, the \emph{generalized symmetric groups} are the wreath products $\Z_m \wr S_n$ with underlying sets $\Z_m^n \times S_n$ and group operation given by \label{def: groupop}
\[ (a_1,\dots_,a_n, \sigma) \cdot (b_1,\dots,b_n, \omega) = (a_1+b_{\sigma(1)}, \dots, a_n+b_{\sigma(n)}, \sigma\omega).\]
It is known that $\Z_m \wr S_n$ is isomorphic to the group of $n\times n$ permutation matrices whose entries are $m^{th}$ roots of unity $\{\xi^i\;| \xi\neq 1,\; 0\leq i \leq m-1\}$ where $\xi= e^{2\pi i/m}$. 
For $n \in \Z_{>0}$, denote by $[n]:=\{1,\dots n\}$ the set of positive integers up to $n$, and $\xi^i[n]:=\{ \xi^i(1), \xi^i(2),\dots, \xi^i(n)\}$ the analogous set scaled by $\xi^i$. Thus, $\Z_m \wr S_n$ can be equivalently be defined as the set of bijections $w$ on the set $\I_n^m:=\bigcup_{i=0}^{m-1} \xi^i[n]$ satisfying $w(\xi^i x) = \xi^{i} w(x)$ for all $x \in [n]$ and $0\leq i\leq m-1$, where we impose the total ordering:
\[
\xi^{m-1}(n) \prec \xi^{m-1}(n-1)\prec \dots \prec\xi^{i+1}(1) \prec \xi^{i}(n)\prec \dots \prec \xi^{1}(1)\prec n \prec \dots \prec 2\prec 1.
\]
In particular, the condition that $w(\xi^i x) = \xi^{i} w(x)$ for all $x$ and $i$ implies that any permutation $w \in \Z_m \wr S_n$ is uniquely determined by its action on $[n]$. 
Hence, we without loss of generality we identify $w$ with its image, and use one line notation to write $w=w(n)w(n-1)\dots w(1)$ to denote the image of $w$.

\begin{example}
  Let $m=3$ and $n=6$. Then the word 
  \[\xi^2(6)\xi(1)\xi(3)\xi^2(4)\xi^0(2)\xi^2(5) \]
  denotes the permutation $w \in \Z_3 \wr S_6$ sending 
  \[
   6 \mapsto \xi^2(6)\; ,\; 5 \mapsto \xi(1) \;,\; 4 \mapsto \xi(3) , 3 \mapsto \xi^2(4) \;,\; 2 \mapsto \xi^0(2)\;,\; 1 \mapsto\xi^2(5).
  \]
\end{example}

\begin{definition} For a complex number $z \in \mathbb{C}$ denote by $\Vert z \Vert$ the Euclidean norm of $z$. Then, for any subset $P \subseteq \I_n^m$, write $\#P$ for its cardinality and define the set:
\[
|P|:=\{ \Vert x \Vert\;|\; x \in P\}= \{ x \in [n] \;| \; \xi^i(x) \in P \text{ for some } i\}.
\]
\end{definition}

So then, let $w=w(n)\dots w(1)$ be any generalized permutation in  $\Z_m \wr S_n$. 
\begin{definition}
A generalized permutation $w \in\Z_m \wr S_n$ has a \newword{pinnacle} at $w(i)$ if $w(i+1)\prec w(i) \succ w(i-1)$. The \newword{Pinnacle set} of $w$ is the collection
\[
\Pin(w):= \{ w(i) \in \I_n^m \; |\; w(i+1)\prec w(i)\succ w(i-1) \}.
\]
\end{definition}

\begin{remark}
 We will often write the one line presentation of a generalized permutation with the pinnacle values slight raised above the rest, as shown in Example \ref{ex:witness}, to highlight the topography of the permutation. 
\end{remark}

\begin{definition} A subset $P \subseteq \I_n^m$ is an \newword{admissible pinnacle set} for $\Z_m\wr S_n$ if  there exists a generalized permutation $w \in\Z_m\wr S_n$, termed a \newword{witness} for $P$, such that $\Pin(w) = P$.
\end{definition}

In particular, this means that a subset $P\subset \I_n^m$ is \emph{not} an admissible pinnacle set precisely when there exist no generalized permutations $w \in \Z_m\wr S_n$ that are witness permutations for $P$. 

\begin{definition}\label{def:APS}
For any integer $d\geq 0$, let $\APS_d(m,n)$ denote the collection of admissible pinnacle sets $P \subseteq \I_n^m$ for $ \Z_m \wr S_n$ such that $\#P \leq d$, and set $\APS(m,n) := \bigcup_d \APS_d(m,n)$.
\end{definition}

In particular, Definition \ref{def:APS} implies that $\APS(m,n)$ admits the following filtration:
\begin{equation}\label{eq:APSfiltration}
\APS_0(m,n) \subset \APS_1(m,n) \subset\dots \subset \APS_d(m,n)\subset \APS_{d+1}(m,n)\subset \dots \subset \APS(m,n).
\end{equation}

\begin{figure}[ht]
\begin{tikzpicture}[yscale=.5, xscale=1.2]
\draw[black!30, dotted, thick] (-.5,-10.5) -- (10.5,-10.5);
\draw[black!30, dotted, thick] (-.5,-0.5) -- (10.5,-0.5);
\draw[thick] (0.7,10)--(0.7,-20.5)--(10.5,-20.5);
\node at (1,-21)[scale=.75]{$10$};
\node at (2,-21)[scale=.75]{$9$};
\node at (3,-21)[scale=.75]{$8$};
\node at (4,-21)[scale=.75]{$7$};
\node at (5,-21)[scale=.75]{$6$};
\node at (6,-21)[scale=.75]{$5$};
\node at (7,-21)[scale=.75]{$4$};
\node at (8,-21)[scale=.75]{$3$};
\node at (9,-21)[scale=.75]{$2$};
\node at (10,-21)[scale=.75]{$1$};
\node at (0,0)[scale=.75]{$10$};
\node at (0,1)[scale=.75]{$9$};
\node at (0,2)[scale=.75]{$8$};
\node at (0,3)[scale=.75]{$7$};
\node at (0,4)[scale=.75]{$6$};
\node at (0,5)[scale=.75]{$5$};
\node at (0,6)[scale=.75]{$4$};
\node at (0,7)[scale=.75]{$3$};
\node at (0,8)[scale=.75]{$2$};
\node at (0,9)[scale=.75]{$1$};
\begin{scope}[shift={(0,-10)}]
\node at (0,0)[scale=.75]{$\xi(10)$};
\node at (0,1)[scale=.75]{$\xi(9)$};
\node at (0,2)[scale=.75]{$\xi(8)$};
\node at (0,3)[scale=.75]{$\xi(7)$};
\node at (0,4)[scale=.75]{$\xi(6)$};
\node at (0,5)[scale=.75]{$\xi(5)$};
\node at (0,6)[scale=.75]{$\xi(4)$};
\node at (0,7)[scale=.75]{$\xi(3)$};
\node at (0,8)[scale=.75]{$\xi(2)$};
\node at (0,9)[scale=.75]{$\xi(1)$};
\end{scope}
\begin{scope}[shift={(0,-20)}]
\node at (0,0)[scale=.75]{$\xi^2(10)$};
\node at (0,1)[scale=.75]{$\xi^2(9)$};
\node at (0,2)[scale=.75]{$\xi^2(8)$};
\node at (0,3)[scale=.75]{$\xi^2(7)$};
\node at (0,4)[scale=.75]{$\xi^2(6)$};
\node at (0,5)[scale=.75]{$\xi^2(5)$};
\node at (0,6)[scale=.75]{$\xi^2(4)$};
\node at (0,7)[scale=.75]{$\xi^2(3)$};
\node at (0,8)[scale=.75]{$\xi^2(2)$};
\node at (0,9)[scale=.75]{$\xi^2(1)$};
\end{scope}
\node (a) at (1,9){$\bullet$};
\node (b) at (2,6){$\bullet$};
\node (c) at (3,8){$\bullet$};
\node (d) at (4,-7){$\bullet$};
\node (e) at (5,-19){$\bullet$};
\node (f) at (6,-3){$\bullet$};
\node (g) at (7,-20){$\bullet$};
\node (h) at (8,-8){$\bullet$};
\node (i) at (9,4){$\bullet$};
\node (j) at (10,-6){$\bullet$};
\draw[thick] (a)--(b)--(c)--(d)--(e)--(f)--(g)--(h)--(i)--(j);
% \node at (8,8){$\bullet$};
%
\draw[red, thick] (c) circle (10pt);
\draw[red, thick] (f) circle (10pt);
\draw[red, thick] (i) circle (10pt);
%\draw[blue] (4,-10) circle (8pt);
%\draw[blue] (6,-11) circle (8pt);
\end{tikzpicture} 
\caption{The graph for the witness of $P =\{\xi^0(2),\xi^1(3),\xi^0(5)\}$ given by
$w =\xi^0(1)\xi^0(4)\xi^0(2)\xi^1(7)\xi^2(9)\xi^1(3)\xi^2(10)\xi^1(8)\xi^0(5)\xi^1(6) \in \Z_3 \wr S_{10}$. The pinnacle values are circles in red.
}\label{fig:witness}
\end{figure}
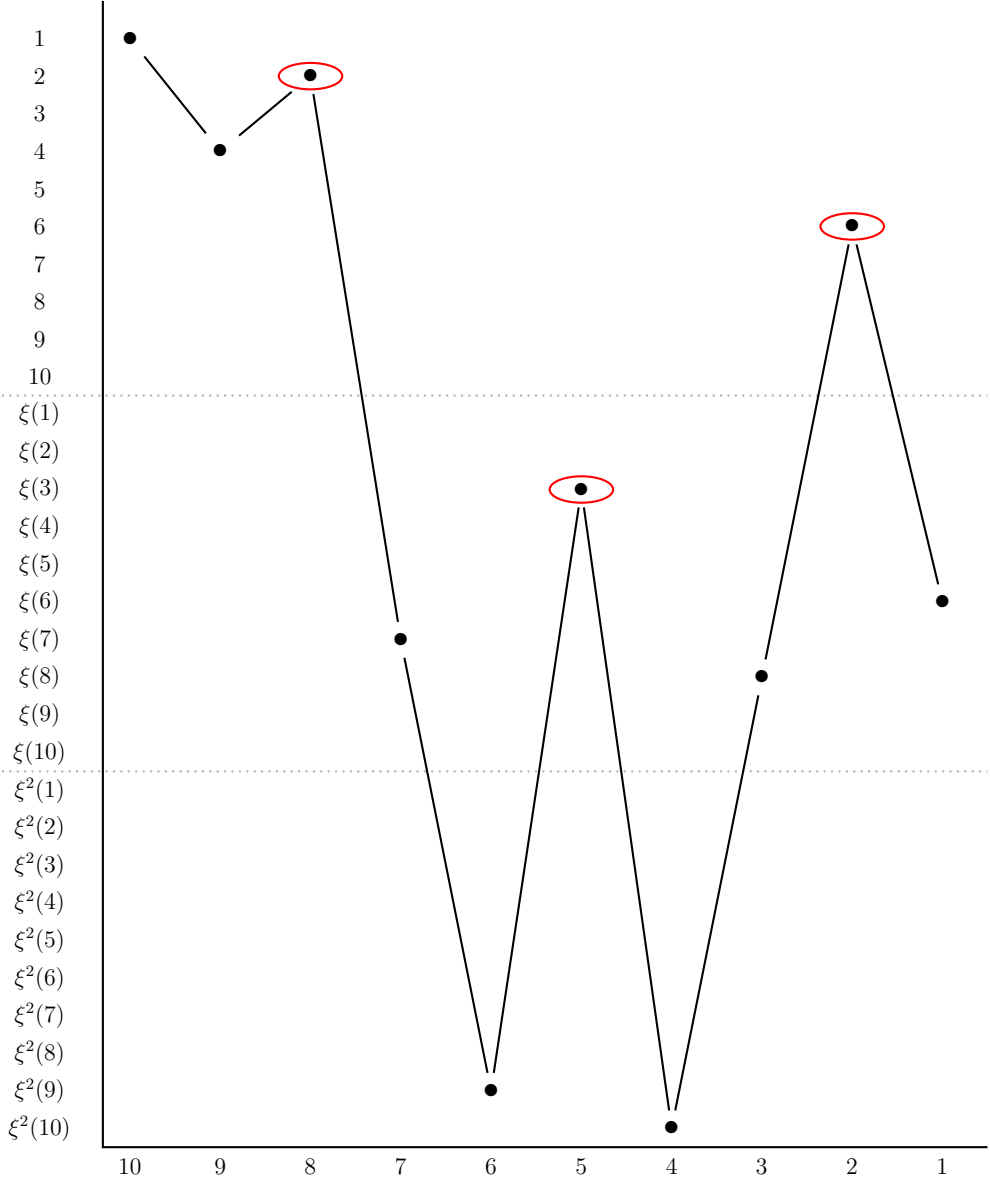

\begin{example}\label{ex:noncanonical witness}
    Consider $P =\{\xi^0(2),\xi^1(3),\xi^0(5)\} \subset \I_{10}^3$. Then $P$ has a witness in $\Z_3 \wr S_{10}$ (see Figure \ref{fig:witness}) given by
\[w =\xi^0(1)\xi^0(4)\xi^0(2)\xi^1(7)\xi^2(9)\xi^1(3)\xi^2(10)\xi^1(8)\xi^0(5)\xi^1(6).\]
Thus, $P \in \APS_3(3,10)$.
\end{example}

\begin{example}\label{ex:witness}
Consider $P = \{\xi^4(3)\prec \xi^3(5)\prec \xi^2(2)\}$. Then the permutation 
\[w=\;\begin{matrix}
   &\xi^2(2)&&\xi^4(3)&&\xi^3(5)&
\\
\xi^4(7)&&\xi^4(6)&&\xi^4(4)&&\xi^4(1)& \xi^4(8)& \xi^4(9)& \xi^4(10)
\end{matrix}\]
is a witness permutation for $P$ since $\Pin(w)=P$. Thus, $P \in \APS_3(4,10)$.
In particular, witnesses are not unique as the permutation $\sigma$ below also satisfies $\Pin(w)=P$, 
\[\sigma=\;\begin{matrix}
    &&&\xi^3(5)&&\xi^2(2)&& \xi^4(3) \\ 
    \xi^3(9) & \xi^4(8) & \xi^3(10)&&\xi^3(6)&&\xi^4(4)&&\xi^4(1)& \xi^1(7).
\end{matrix}
\]
\end{example}

\begin{example}\label{ex:trivial}
For any choice of $m,n$ the empty set is always an admissible pinnacle set for $\Z_m\wr S_n$ since we always have that $\Pin(id)=\emptyset$. 
\end{example}

\begin{example}\label{ex:inadmissible}
    Consider the set $S = \{\xi^4(3) \prec \xi^2(3) \prec \xi^0(1)\} \subset \I_7^5$. We claim $S$ is not an admissible pinnacle set in $\Z_5 \wr S_7$. To see this, suppose $S$ was admissible with witness $w$. Then necessarily, $w(i) = \xi^4(3)$ and $w(j) = \xi^2(3)$ for some $i\neq j \in \{ 1, \dots, 7\}$. Since $w(\xi^k x) = \xi^k(x)$ for any $k$ and $x$, then $w(\xi^1 i) = w(\xi^3 j) = 3$, contradicting the fact that $w$ is a bijection on $\I_7^5$.
\end{example}

The following lemma generalizes Example \ref{ex:inadmissible}.

\begin{lemma} \label{lem: multiplicity} 
    If $P \in \APS(m, n)$ with $P = \{ \xi^{a_1}(p_1) \prec \xi^{a_2}(p_2) \prec \dots \prec \xi^{a_d}(p_d)\}$ then $p_i \neq p_j$ for all $i \neq j$.
\end{lemma}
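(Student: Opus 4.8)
The plan is to argue by contradiction, essentially by generalizing the pigeonhole/bijection argument already sketched in Example \ref{ex:inadmissible}. Suppose $P = \{\xi^{a_1}(p_1) \prec \dots \prec \xi^{a_d}(p_d)\}$ is admissible with witness $w \in \Z_m \wr S_n$, and suppose toward a contradiction that $p_i = p_j =: p$ for some $i \neq j$. Since $\Pin(w) = P$, both $\xi^{a_i}(p)$ and $\xi^{a_j}(p)$ occur as values of $w$ on the fundamental domain $[n]$; that is, there exist distinct $x, y \in [n]$ with $w(x) = \xi^{a_i}(p)$ and $w(y) = \xi^{a_j}(p)$. The distinctness $x \neq y$ follows because $w$ is a bijection and $\xi^{a_i}(p) \neq \xi^{a_j}(p)$ (these are literally different elements of $P$, which is a set).

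The key step is then to exploit the equivariance relation $w(\xi^k z) = \xi^k w(z)$ to produce two distinct inputs with the same output. From $w(x) = \xi^{a_i}(p)$ we get, applying the relation with the shift needed to cancel $a_i$, that $w(\xi^{m - a_i} x) = \xi^{m-a_i} \xi^{a_i}(p) = \xi^{0}(p) = p$ (working with exponents modulo $m$). Similarly $w(\xi^{m-a_j} y) = p$. Now $\xi^{m-a_i} x$ and $\xi^{m-a_j} y$ are both elements of $\I_n^m$, and I must check they are distinct: since $x, y \in [n]$ are both honest positive integers in the fundamental domain and $x \neq y$, the elements $\xi^{m-a_i}x$ and $\xi^{m-a_j}y$ coincide only if $m - a_i \equiv m - a_j \pmod m$ and $x = y$; but $x \neq y$ already rules this out (indeed even if $a_i = a_j$ we'd need $x = y$). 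Hence $w$ sends two distinct elements of $\I_n^m$ to the same value $p$, contradicting injectivity of $w$ on $\I_n^m$. Therefore $p_i \neq p_j$ for all $i \neq j$.

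There is essentially no serious obstacle here; the only thing requiring a little care is the bookkeeping of exponents of $\xi$ modulo $m$ and making sure the two constructed preimages are genuinely distinct. One subtlety worth stating explicitly: the claim $x \neq y$ uses that $\xi^{a_i}(p_i)$ and $\xi^{a_j}(p_j)$ are distinct \emph{as elements of the set} $P$ — if one allowed $a_i = a_j$ and $p_i = p_j$ simultaneously these would be the same element and there would be nothing to prove, so the hypothesis $i \neq j$ together with $P$ being a set already forces $(a_i, p_i) \neq (a_j, p_j)$, and the interesting case is precisely $p_i = p_j$ with $a_i \neq a_j$. I would phrase the writeup to handle this cleanly: assume $p_i = p_j$, deduce $a_i \neq a_j$ from $i \neq j$ (otherwise the two elements of $P$ coincide), then run the equivariance argument above to contradict bijectivity.
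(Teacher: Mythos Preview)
Your proposal is correct and follows essentially the same approach as the paper: assume $p_i = p_j$, locate $x \neq y$ in $[n]$ with $w(x) = \xi^{a_i}(p_i)$ and $w(y) = \xi^{a_j}(p_j)$, then use the equivariance $w(\xi^k z) = \xi^k w(z)$ to produce $w(\xi^{m-a_i}x) = p = w(\xi^{m-a_j}y)$, contradicting bijectivity. Your write-up is a bit more careful than the paper's in explicitly verifying that the two constructed preimages are distinct and in noting that $a_i \neq a_j$ is forced, but the argument is the same.
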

\begin{proof}
For a contradiction, suppose that $p_i = p_j$ for some $i\neq j$. Then since $P$ is admissible there exists $w \in \Z_m \wr S_n$ with $\Pin(w) = P$. Thus, $w(x) =\xi^{a_i}(p_i)$ and $w(y) =\xi^{a_j}(p_j)$ for some $x\neq y \in [n]$, which implies $w(\xi^{m-a_i}x)=p_i=w(\xi^{m-a_j}y)$, contradicting that fact that $w$ is a bijection.
\end{proof}

\subsection{Characterizing Admissible Pinnacle sets for $\Z_m\wr S_n$} \label{subsec:characterization}
In order to give a complete characterization of the admissible pinnacle sets it will be useful to subdivide any $P \in \APS(m,n)$ as follows. 
For any $0\leq i \leq m-1$, let $\pi_i(P) := P \cap \xi^i[n]$ so that 
\begin{align}
    P &=\bigcup_{i=0}^{m-1}\pi_i(P).
\end{align}
Additionally, since any $P \in \APS(m,n)$ admits potentially many witness permutations, we will often select a unique representative from the set of all possible witnesses in which the pinnacle and non-pinnacle values appear in increasing order when read left to right. 

\par For our first characterization, utilizing the periodicity condition referenced earlier we can cap the length of each admissible pinnacle set quite nicely. This result directly coincides with that of \cite{PinSignedPermutations} and \cite{PinnaclesTypeA}.

\begin{lemma} \label{lem:maxLength}
    For any $w \in \Z_m\wr S_n$, $\#Pin(w) \leq \floor{\frac{n-1}{2}}$. Thus, $\APS_d(m, n) \setminus \APS_{\floor{\frac{n-1}{2}}}(m, n) =\emptyset$ for any $d>\floor{\frac{n-1}{2}}$.
\end{lemma}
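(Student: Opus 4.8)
The plan is to reduce the statement to the classical symmetric group case that is already established in \cite{PinnaclesTypeA}. The key observation is that the periodicity condition $w(\xi^i x) = \xi^i w(x)$ means that $w$ is completely determined by its one-line presentation $w = w(n)\cdots w(1)$ on $[n]$, and that the relative order of the values $w(1), \dots, w(n)$ (with respect to $\prec$) is a bona fide linear order on an $n$-element set. Thus, passing to the unique order-isomorphism from $\{w(1), \dots, w(n)\}$ (ordered by $\prec$) to $[n] = \{1 < \dots < n\}$ gives a permutation $\bar w \in S_n$, and by construction $i \in \Peak(w) \iff i \in \Peak(\bar w)$: the peak positions depend only on the relative order of consecutive entries, which is preserved. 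Hence $\#\Pin(w) = \#\Peak(w) = \#\Peak(\bar w) = \#\Pin(\bar w)$.

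First I would make the order-isomorphism explicit: given $w \in \Z_m \wr S_n$, note that the entries $w(n), \dots, w(1)$ are $n$ distinct elements of $\I_n^m$, totally ordered by $\prec$; let $\phi\colon \{w(1),\dots,w(n)\} \to [n]$ be the unique increasing bijection and set $\bar w(i) := \phi(w(i))$. Then $w(i+1) \prec w(i) \succ w(i-1)$ holds if and only if $\bar w(i+1) < \bar w(i) > \bar w(i-1)$, so $\Peak(w) = \Peak(\bar w)$ and therefore $\#\Pin(w) = \#\Pin(\bar w)$. Second, I would invoke the known bound for $S_n$: for any $\bar w \in S_n$ one has $\#\Pin(\bar w) \le \floor{\frac{n-1}{2}}$ (this is \cite[Lemma 1]{PinnaclesTypeA}, and follows from the fact that peaks cannot occur at positions $1$ or $n$ and no two peaks can be adjacent, so among the $n-2$ interior positions at most $\ceil{\frac{n-2}{2}} = \floor{\frac{n-1}{2}}$ can be peaks). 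Combining gives $\#\Pin(w) \le \floor{\frac{n-1}{2}}$.

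Alternatively, if one prefers a self-contained argument avoiding the reduction, I would argue directly: peaks of $w$ occur at positions $i \in \{2, \dots, n-1\}$ (since positions $1$ and $n$ have only one neighbor), and if $i$ is a peak then $i+1$ is not a peak (as $w(i) \succ w(i+1)$ forces $w(i+1) \not\succ w(i)$). So $\Peak(w)$ is a subset of $\{2, \dots, n-1\}$ containing no two consecutive integers, and any such subset has size at most $\floor{\frac{n-1}{2}}$ by a standard counting argument. Since $|\Pin(w)| = |\Peak(w)|$ by definition, the bound follows. The second sentence of the lemma is then immediate: if $d > \floor{\frac{n-1}{2}}$ then no pinnacle set of cardinality strictly greater than $\floor{\frac{n-1}{2}}$ exists, so $\APS_d(m,n) = \APS_{\floor{\frac{n-1}{2}}}(m,n)$ and the set difference is empty.

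I do not anticipate a serious obstacle here; the only point requiring minor care is confirming that the total order $\prec$ restricted to the $n$ values of a given $w$ really is a linear order (it is, being the restriction of a linear order on $\I_n^m$), so that the order-isomorphism to $[n]$ is well-defined and the peak condition transfers verbatim. Everything else is the standard "no two adjacent peaks, none at the ends" combinatorial bound.
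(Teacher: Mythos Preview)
Your proof is correct. Your alternative direct argument (no two peaks adjacent, none at the endpoints, hence $\#\Peak(w)\le\floor{(n-1)/2}$) is essentially the paper's own argument, which phrases the same count as ``$d$ pinnacles force at least $d+1$ non-pinnacle values, so $2d+1\le n$.'' Your primary approach via standardization---replacing $w$ by the order-isomorphic $\bar w\in S_n$ and invoking the $S_n$ bound from \cite{PinnaclesTypeA}---is a genuinely different route; it is clean and makes explicit that the bound depends only on the order type of the word, at the cost of importing the $S_n$ result rather than proving it in place. One point worth noting: the paper's proof goes further than the lemma statement and exhibits, for each $m$ and $n$, an explicit $w\in\Z_m\wr S_n$ with $\#\Pin(w)=\floor{(n-1)/2}$, so that the bound is tight. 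You omit this, which is fine for the lemma as stated, but the tightness is used immediately afterward to justify that the filtration terminates nontrivially at $\APS_{\floor{(n-1)/2}}(m,n)$.
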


\begin{proof}
Suppose $\sigma$ is a generalized permutation in $\Z_m\wr S_n$ with exactly $d$ pinnacles. Then, since any pinnacle $\sigma(x_i)$ of $\sigma$ must satisfy $\sigma(x_{i-1}) \prec \sigma(x_i) \succ \sigma(x_{i+1})$, then $w$ must have at least $d+1$ non pinnacle values. Hence, $2d+1\leq n$. 

To see the bound is tight, for $m>1$ consider $w \in \Z_m \wr S_n$ given by
   \[
    w = \begin{cases}
    \xi^{m-1}(n)\; \xi^0(n-1)\; \xi^{m-1}(n-2)\; \dots\; \xi^{0}(2)\; \xi^{m-1}(1) &; n \text{ even,} \\
    \xi^{m-1}(n)\; \xi^0(n-1)\; \xi^{m-1}(n-2)\; \dots\;\xi^{0}(3) \;\xi^{m-1}(2)\; \xi^{m-1}(1) &; n \text{ odd. }
        \end{cases}
   \]
    Then, $\#\Pin(w) = \floor{\frac{n-1}{2}}$.
The case when $m=1$ follows from \cite[Lemma 2.1]{PinnaclesTypeA}.    
\end{proof}

As a consequence of Lemma \ref{lem:maxLength} the filtration in \eqref{eq:APSfiltration} becomes:
\begin{equation}\label{eq:APSfiltration2}
\APS_0(m,n) \subset \APS_1(m,n) \subset \dots \subset \APS_{\floor{\frac{n-1}{2}}}(m,n)=\APS(m,n).
\end{equation}

In fact, we will see that as consequence of Theorem \ref{thm:pinn-formula} $\APS_d(m,n)$ will in fact be nonempty for all $0\leq d \leq \floor{\frac{n-1}{2}}$.

\begin{definition} \label{def:witness}
Suppose $P = \{ \xi^{a_1}(p_1) \prec \dots \prec \xi^{a_d}(p_d)\}$ is an admissible pinnacle set for $\Z_m \wr S_n$, with $[n]\setminus \{p_1,\dots,p_d\}= \{v_1\prec \dots\prec v_{n-d}\}$. The \newword{canonical witness} $\omega_P \in \Z_m \wr S_n$ of $P$ is the generalized permutation such that $\omega_P(j) := \xi^{m-1}(v_{n-d-j+1})$ for $1\leq j \leq n-2d$ and 
\[
\omega_P(n-2i+1) := \xi^{a_{i}}(p_{i}) 
\qquad \text{and} \qquad 
\omega_P(n-2i+2):= \xi^{m-1}(v_{i})
\]
for each $1\leq i \leq d$. That is, $\omega_P$ has the form, 
\begin{equation*}
\begin{matrix}
    & \xi^{a_1}(p_1) && \xi^{a_2}(p_2) & \dots && \xi^{a_d}(p_d)  
    \\
    \xi^{m-1}(v_{1}) &&\xi^{m-1}(v_{2}) && \dots & \xi^{m-1}(v_{d}) && \xi^{m-1}(v_{d+1}) \dots \xi^{m-1}(v_{n-d}).      
\end{matrix}
\end{equation*}
\end{definition}

It is important to note that in the canonical witness both the pinnacle values and non-pinnacle values appear in increasing order from left to right. 

\begin{example}
    Continuing from Example \ref{ex:noncanonical witness} with $P =\{\xi^1(3)\prec \xi^0(5)\prec \xi^0(2)\} \in \APS_3(3,10)$, in Figure \ref{fig:canonicalwitness} we see that the canonical witness for $P$ is given by
\[\omega_P =
\begin{matrix}
&\xi^1(3)&&\xi^0(5)&&\xi^0(2)&&&&\\
\xi^2(10)&&\xi^2(9)&&\xi^2(8)&&\xi^2(7)&\xi^2(6)&\xi^2(4)&\xi^2(1).
\end{matrix}
\] 
\end{example}

\begin{example}
From Example \ref{ex:witness} we know that $P = \{\xi^4(3)\prec \xi^3(5)\prec \xi^2(2)\} \in \APS(4,10)$ with witness permutation $w = \xi^4(7)\;\xi^2(2)\;\xi^4(6)\;\xi^4(3)\;\xi^4(4)\;\xi^3(5)\;\xi^4(1)\;\xi^4(8)\; \xi^4(9)\; \xi^4(10)$. Observe that when reading the pinnacles (resp. non-pinnacles) of $w$ from left to right they do not appear in increasing order. 

On the other hand, the canonical witness of $P$ is the permutation $\omega_P \in \Z_4 \wr S_{10}$,
\[\omega_P=\;\begin{matrix}
   &\xi^4(3)&&\xi^3(5)&&\xi^2(2)&
\\
\xi^4(10)&&\xi^4(9)&&\xi^4(8)&&\xi^4(7)& \xi^4(6)& \xi^4(4)& \xi^4(1),
\end{matrix}\]
where the pinnacles (resp. non-pinnacles) do appear in increasing order from left to right.
\end{example}

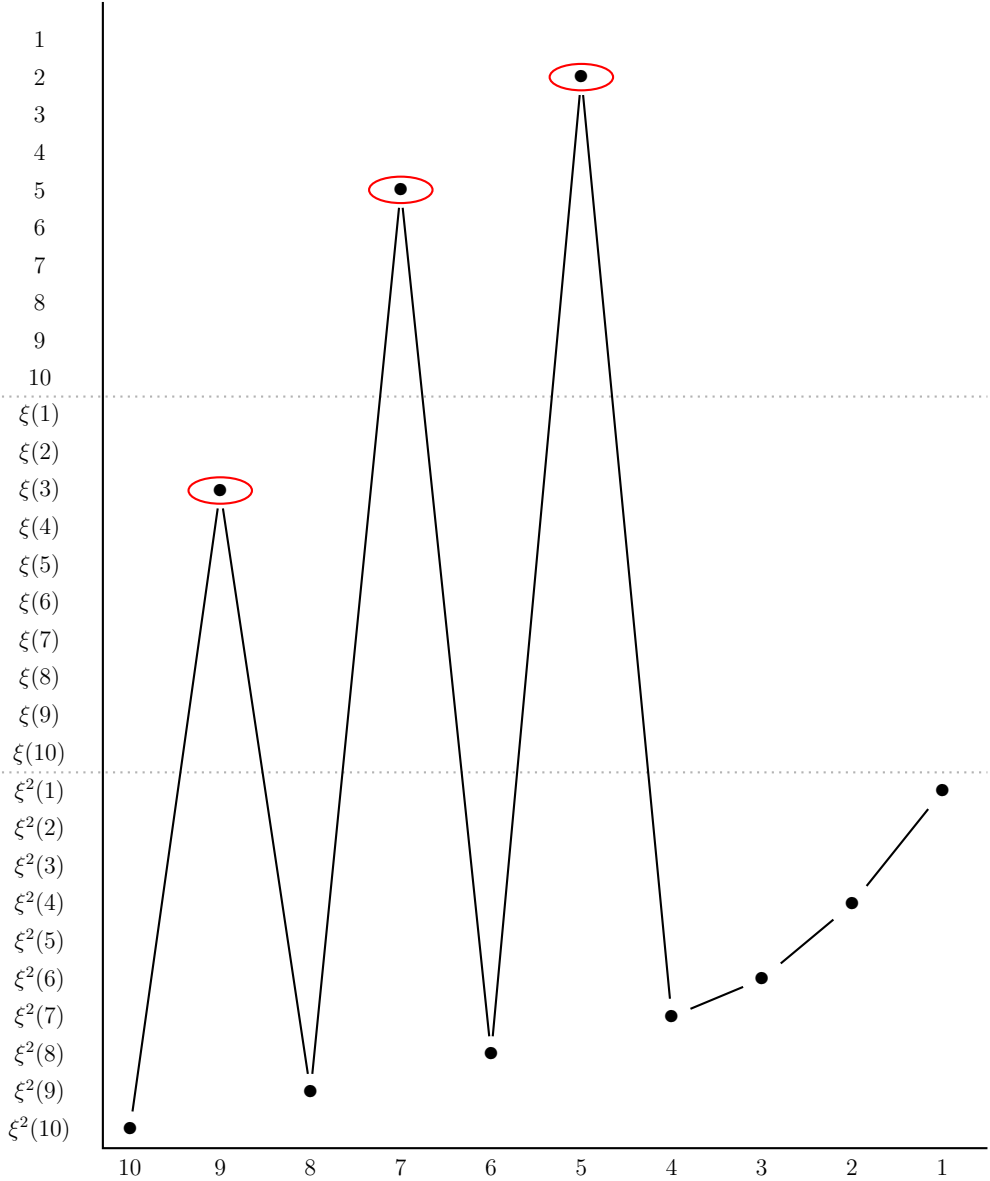
\begin{figure}[ht] 
\begin{tikzpicture}[yscale=.5, xscale=1.2]
%\draw (-1,8) node {(a)};
\draw[black!30, dotted, thick] (-.5,-10.5) -- (10.5,-10.5);
\draw[black!30, dotted, thick] (-.5,-0.5) -- (10.5,-0.5);
\draw[thick] (0.7,10)--(0.7,-20.5)--(10.5,-20.5);
\node at (1,-21)[scale=.75]{$10$};
\node at (2,-21)[scale=.75]{$9$};
\node at (3,-21)[scale=.75]{$8$};
\node at (4,-21)[scale=.75]{$7$};
\node at (5,-21)[scale=.75]{$6$};
\node at (6,-21)[scale=.75]{$5$};
\node at (7,-21)[scale=.75]{$4$};
\node at (8,-21)[scale=.75]{$3$};
\node at (9,-21)[scale=.75]{$2$};
\node at (10,-21)[scale=.75]{$1$};
\node at (0,0)[scale=.75]{$10$};
\node at (0,1)[scale=.75]{$9$};
\node at (0,2)[scale=.75]{$8$};
\node at (0,3)[scale=.75]{$7$};
\node at (0,4)[scale=.75]{$6$};
\node at (0,5)[scale=.75]{$5$};
\node at (0,6)[scale=.75]{$4$};
\node at (0,7)[scale=.75]{$3$};
\node at (0,8)[scale=.75]{$2$};
\node at (0,9)[scale=.75]{$1$};
\begin{scope}[shift={(0,-10)}]
\node at (0,0)[scale=.75]{$\xi(10)$};
\node at (0,1)[scale=.75]{$\xi(9)$};
\node at (0,2)[scale=.75]{$\xi(8)$};
\node at (0,3)[scale=.75]{$\xi(7)$};
\node at (0,4)[scale=.75]{$\xi(6)$};
\node at (0,5)[scale=.75]{$\xi(5)$};
\node at (0,6)[scale=.75]{$\xi(4)$};
\node at (0,7)[scale=.75]{$\xi(3)$};
\node at (0,8)[scale=.75]{$\xi(2)$};
\node at (0,9)[scale=.75]{$\xi(1)$};
\end{scope}
% \node at (0,-1)[scale=.75]{$\xi(1)$};
% \node at (0,-2)[scale=.75]{$\xi(2)$};
% \node at (0,-3)[scale=.75]{$\xi(3)$};
% \node at (0,-4)[scale=.75]{$\xi(4)$};
% \node at (0,-5)[scale=.75]{$\xi(5)$};
% \node at (0,-6)[scale=.75]{$\xi(6)$};
\begin{scope}[shift={(0,-20)}]
\node at (0,0)[scale=.75]{$\xi^2(10)$};
\node at (0,1)[scale=.75]{$\xi^2(9)$};
\node at (0,2)[scale=.75]{$\xi^2(8)$};
\node at (0,3)[scale=.75]{$\xi^2(7)$};
\node at (0,4)[scale=.75]{$\xi^2(6)$};
\node at (0,5)[scale=.75]{$\xi^2(5)$};
\node at (0,6)[scale=.75]{$\xi^2(4)$};
\node at (0,7)[scale=.75]{$\xi^2(3)$};
\node at (0,8)[scale=.75]{$\xi^2(2)$};
\node at (0,9)[scale=.75]{$\xi^2(1)$};
\end{scope}
% \node at (0,-7)[scale=.75]{$\xi^2(1)$};
% \node at (0,-8)[scale=.75]{$\xi^2(2)$};
% \node at (0,-9)[scale=.75]{$\xi^2(3)$};
% \node at (0,-10)[scale=.75]{$\xi^2(4)$};
% \node at (0,-11)[scale=.75]{$\xi^2(5)$};
% \node at (0,-12)[scale=.75]{$\xi^2(6)$};

%\bar{7}\bar{4} \bar{6} 1 \bar{5}2 \bar{3}
\node (a) at (1,-20){$\bullet$};
\node (b) at (2,-3){$\bullet$};
\node (c) at (3,-19){$\bullet$};
\node (d) at (4,5){$\bullet$};
\node (e) at (5,-18){$\bullet$};
\node (f) at (6,8){$\bullet$};
\node (g) at (7,-17){$\bullet$};
\node (h) at (8,-16){$\bullet$};
\node (i) at (9,-14){$\bullet$};
\node (j) at (10,-11){$\bullet$};
\draw[thick] (a)--(b)--(c)--(d)--(e)--(f)--(g)--(h)--(i)--(j);
% \node at (8,8){$\bullet$};
%
\draw[red, thick] (b) circle (10pt);
\draw[red, thick] (d) circle (10pt);
\draw[red, thick] (f) circle (10pt);
%\draw[blue] (4,-10) circle (8pt);
%\draw[blue] (6,-11) circle (8pt);
\end{tikzpicture} 
\caption{The graph for the canonical witness permutation $\omega_P$ for $P =\{\xi^1(3)\prec \xi^0(5)\prec \xi^0(2)\} \in \APS_3(3,10)$ given by
$\omega_P =\xi^2(10)\xi^1(3)\xi^2(9)\xi^0(5)\xi^2(8)\xi^0(2)\xi^2(7)\xi^2(6)\xi^2(4)\xi^2(1) \in \Z_3 \wr S_{10}$. The pinnacle values are circles in red.
}\label{fig:canonicalwitness}
\end{figure}

Naturally, we then need to prove that this witness $\omega_P$ has the property that $Pin(\omega_P) = S$ for any given $S\in \APS(m,n)$.

\begin{lemma}
For any given $P \in \APS_d(m, n)$, the canonical witness $\omega_P \in \Z_m \wr S_n$ satisfies $Pin(\omega_P) = P$. Thus, the canonical witness is a witness. 
\end{lemma}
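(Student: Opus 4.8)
The plan is to verify directly from Definition \ref{def:witness} that the proposed word $\omega_P$ realizes $P$ as its pinnacle set, by analyzing the local up-down pattern at every position. Write $P = \{\xi^{a_1}(p_1)\prec\dots\prec\xi^{a_d}(p_d)\}$ and $[n]\setminus\{p_1,\dots,p_d\}=\{v_1\prec\dots\prec v_{n-d}\}$. First I would record the key order facts coming from the total order $\prec$ on $\I_n^m$: every value of the form $\xi^{m-1}(x)$ is $\preceq$ every value of the form $\xi^{a}(y)$ with $a<m-1$ (indeed $\xi^{m-1}(1)$ is the $\prec$-largest among the $\xi^{m-1}$-block, and it sits just below $\xi^{m-2}(n)$ in the order), so in $\omega_P$ every pinnacle candidate $\xi^{a_i}(p_i)$ with $a_i<m-1$ automatically dominates its two $\xi^{m-1}$-neighbors; and within the $\xi^{m-1}$-block the order is $\xi^{m-1}(n)\prec\dots\prec\xi^{m-1}(1)$, i.e.\ smaller underlying integer means $\prec$-larger. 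I would also note the annoyance that a "pinnacle value" $\xi^{a_i}(p_i)$ could itself lie in the $\xi^{m-1}$-block (when $a_i=m-1$); Lemma \ref{lem: multiplicity} guarantees the $p_i$ are distinct from each other, but not from the $v_j$'s — wait, they are: the $v_j$ are defined as $[n]\setminus\{p_1,\dots,p_d\}$, so $\{p_i\}$ and $\{v_j\}$ are disjoint as subsets of $[n]$, hence $\xi^{m-1}(p_i)\neq\xi^{m-1}(v_j)$ always. This disjointness is what I will lean on to compare a pinnacle entry against a neighboring non-pinnacle entry even when both sit in the top $\xi^{m-1}$-block.

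Next I would split the positions of $\omega_P$ into three regions and check each. Region I: positions $1\le j\le n-2d$, where $\omega_P(j)=\xi^{m-1}(v_{n-d-j+1})$; as $j$ increases the index $n-d-j+1$ decreases, so by the order fact above these entries are strictly $\prec$-increasing; hence for $1\le j\le n-2d-1$ we have $\omega_P(j-1)\prec\omega_P(j)\prec\omega_P(j+1)$ (with $\omega_P(0)$ undefined / treated as $-\infty$ at position $1$), so none of these is a pinnacle, and moreover the last such entry $\omega_P(n-2d)=\xi^{m-1}(v_{d+1})$ transitions into position $n-2d+1=\omega_P(n-2d+1)$. Region II: the "ascending" part, positions $n-2d+1,\dots,n$, which alternate $\xi^{m-1}(v_i)$ (even offset) and $\xi^{a_i}(p_i)$ (odd offset $n-2i+1$). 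Here I must check (a) each $\omega_P(n-2i+1)=\xi^{a_i}(p_i)$ is a pinnacle, i.e.\ it is $\prec$-larger than both neighbors $\omega_P(n-2i)=\xi^{m-1}(v_{i-1})$ (or, at $i=d$, the Region I entry $\xi^{m-1}(v_{d+1})$... let me re-index: the left neighbor of position $n-2i+1$ is position $n-2i+2$, which is $\xi^{m-1}(v_{i-1})$ for $i\ge 2$ and is $\xi^{m-1}(v_{d+1})$-region for $i=1$... I'd set up the indices carefully) and $\omega_P(n-2i+2)=\xi^{m-1}(v_i)$; and (b) each $\omega_P(n-2i+2)=\xi^{m-1}(v_i)$ is \emph{not} a pinnacle, which is clear since both its neighbors are pinnacle entries of the form $\xi^{a_j}(p_j)$ and... hmm, I need them to be $\prec$-larger, which holds if $a_j<m-1$ but needs the disjointness argument if $a_j=m-1$, in which case $\xi^{m-1}(p_j)$ vs $\xi^{m-1}(v_i)$ is decided by comparing $p_j$ and $v_i$ as integers, and I claim the construction still works because... this is exactly the delicate point (see below). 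Region III is subsumed: position $n$ is $\xi^{a_d}(p_d)$, a pinnacle with only a left neighbor, still fine since a pinnacle only needs $\omega_P(i+1)\prec\omega_P(i)\succ\omega_P(i-1)$ and by the stated convention the missing neighbor is treated as $-\infty$; I'd double check the boundary conventions against the paper's definition of $\Pin$.

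The main obstacle is the case $a_i = m-1$ for some $i$, i.e.\ a pinnacle value living in the same $\xi^{m-1}$-block as all the "valley" fillers $\xi^{m-1}(v_j)$. In that situation, whether $\xi^{m-1}(p_i)\succ\xi^{m-1}(v_{i-1})$ and $\xi^{m-1}(p_i)\succ\xi^{m-1}(v_i)$ depends on the integer comparisons $p_i<v_{i-1}$ and $p_i<v_i$, which are \emph{not} automatic — and this is presumably why the canonical witness is built so that the $v_j$'s are used in \emph{increasing} order ($v_1$ near the top pinnacle $\xi^{a_d}(p_d)$... again I'd fix orientation) while the pinnacles are also in increasing order. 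I expect the resolution to be a counting/interleaving argument: I would prove that admissibility of $P$ forces, for each $i$, enough small non-pinnacle values to be available — concretely that $\#\{v_j : v_j < p_i\} \ge$ (something like) $2i-1$ minus the pinnacles below $p_i$, which is the standard "pinnacle admissibility inequality" that must have been established (or will be established) en route to Theorem \ref{thm: pinCharacter}. If that characterization is not yet available at this point in the paper, I would instead argue self-containedly: order the pinnacles by norm-then-argument as in $P$, pair the $i$-th pinnacle with $v_i$, and show by a direct inequality on indices that $v_i \prec \xi^{a_i}(p_i)$ and $v_{i-1}\prec\xi^{a_i}(p_i)$ using only that $|P|$ and the $v$'s partition $[n]$ together with the hypothesis that $P$ is admissible (so no "overcrowding" at any level can occur). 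Once the comparisons in Region II are nailed down, the rest is a routine check that the alternating pattern produces exactly $d$ pinnacles, located precisely at the entries $\xi^{a_i}(p_i)$, giving $\Pin(\omega_P)=P$.
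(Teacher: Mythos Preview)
Your plan matches the paper's proof: both reduce to checking $\xi^{m-1}(v_i)\prec\xi^{a_i}(p_i)\succ\xi^{m-1}(v_{i+1})$ for each $i$, dispatch the case $a_i\neq m-1$ trivially, and for $a_i=m-1$ invoke the admissibility hypothesis (some witness exists) to conclude that at least $i+1$ of the non-pinnacle integers $v_j$ lie above $p_i$ in the integer order, which is exactly the ``counting/interleaving'' argument you describe. One indexing slip worth fixing before you write it up: by Definition~\ref{def:witness} (take $i=1$), $\omega_P(n)=\xi^{m-1}(v_1)$, not $\xi^{a_d}(p_d)$, so neither endpoint of $\omega_P$ carries a pinnacle value and no $-\infty$ boundary convention is needed.
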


\begin{proof}
If $P = \{ \xi^{a_1}(p_1) \prec \dots \prec \xi^{a_d}(p_d)\}$ and  $[n]\setminus \{p_1,\dots,p_d\}= \{v_1\prec \dots\prec v_{n-d}\}$, to see that $\omega_P$ has the form given in Definition \ref{def:witness} we first note that by Lemma \ref{lem:maxLength} $d<n-d$, so that the permutation is well-defined. 

Now, to see that $\Pin(\omega_P) =P$ it suffices to prove that $\xi^{m-1}(v_i) \prec \xi^{a_i}(p_i) \succ \xi^{m-1}(v_{i+1})$ for all $1\leq i \leq d$. Evidently, if $a_i \neq m-1$ then the inequalities hold. 
So suppose $a_i = m-1$. Then, since $\xi^{a_j}(p_j)\prec \xi^{a_i}(p_i)$ for all $1\leq j<i$ and $P$ is admissible, then there must exist at least $i+1$ values $\xi^{m-1}(v_1),\dots,\xi^{m-1}(v_{i+1})\in \xi^{m-1}[n]$ that satisfy the desired inequalities $\xi^{m-1}(v_j) \prec \xi^{m-1}(p_j) \succ \xi^{m-1}(v_{j+1})$ for each $ 1 \leq j \leq i$. Thus, $\Pin(\omega_P) =P$.
\end{proof}

In analogy to Definition 2.4 of \cite{PinSignedPermutations} we consider the following special sets.

\begin{definition}
   An subset $S \subseteq \I_n^m$ is \newword{colored} if $S = \pi_i(S)$ for some $i \in \{0,\dots,m-1\}$.
\end{definition}

\begin{proposition}\label{prop:colored}
    Every colored set $S \subseteq \I_n^m$ is an admissible pinnacle set in $\Z_m \wr S_N$ for some integer $N$. 
\end{proposition}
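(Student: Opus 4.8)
The plan is to realize $S$ directly as the pinnacle set of a canonical witness inside a suitably enlarged generalized symmetric group. Let $i$ be the color with $S = \pi_i(S)$ and write $S = \{\xi^i(p_1) \prec \cdots \prec \xi^i(p_d)\}$. If $d = 0$ then $S = \emptyset$ is admissible in every $\Z_m \wr S_N$ by Example~\ref{ex:trivial}, so assume $d \geq 1$. Because $S$ is a set whose elements $\xi^i(p_j)$ all lie in the $i$-th color block, on which the displayed order restricts to $\xi^i(N) \prec \cdots \prec \xi^i(1)$, we have $p_1 > p_2 > \cdots > p_d \geq 1$; in particular the $p_j$ are distinct and $p_1 \geq d$. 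I would then set $N := p_1 + d + 1$ and let $\{v_1 > v_2 > \cdots > v_{N-d}\} := [N] \setminus \{p_1, \dots, p_d\}$. The point of this choice of $N$ is the elementary observation that the integers $N, N-1, \dots, p_1+1$ are all non-pinnacle integers (none equals any $p_j \leq p_1$), so that $v_1, \dots, v_{d+1}$ are exactly $N, N-1, \dots, p_1+1$; hence $v_j \geq p_1 + 1 > p_1$ for every $j \leq d+1$.

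Next I would take $w := \omega_S \in \Z_m \wr S_N$ to be the permutation given by the formula of Definition~\ref{def:witness} with $a_1 = \cdots = a_d = i$. Since $N - 2d = p_1 + 1 - d \geq 1$ this has the prescribed shape, and the norms of its entries are $\{p_1, \dots, p_d\} \sqcup \{v_1, \dots, v_{N-d}\} = [N]$, so $w$ is a genuine element of $\Z_m \wr S_N$. It then remains to check $\Pin(w) = S$. Arguing exactly as in the proof of the preceding lemma — each non-pinnacle entry $\xi^{m-1}(v_j)$ of $\omega_S$ is either an endpoint of the one-line word, a strict valley lying below both its neighbors, or an interior point of the increasing tail $\xi^{m-1}(v_{d+1}) \prec \cdots \prec \xi^{m-1}(v_{N-d})$, hence not a pinnacle (a reduction which does not use admissibility) — it suffices to verify $\xi^{m-1}(v_j) \prec \xi^i(p_j) \succ \xi^{m-1}(v_{j+1})$ for $1 \leq j \leq d$. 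If $i \neq m-1$ these hold automatically, since $\xi^i(x) \succ \xi^{m-1}(y)$ for all $x, y \in [N]$. If $i = m-1$ the inequalities are equivalent to $v_j > p_j$ and $v_{j+1} > p_j$ as integers, and since $j, j+1 \leq d+1$ we get $v_j, v_{j+1} \geq v_{d+1} = p_1 + 1 > p_1 \geq p_j$. Therefore $\Pin(w) = S$, so $S \in \APS_d(m, N)$.

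I do not expect a genuine obstacle here. The only delicate point is the case $i = m-1$: the color block $\xi^{m-1}[N]$ is the bottom of the total order, so it no longer automatically dominates the valley values, and one must instead enlarge $N$ to manufacture enough fresh small entries of color $m-1$ to sit below the pinnacles — precisely what padding $N$ up to $p_1 + d + 1$ accomplishes. The rest is the routine bookkeeping (already carried out in the preceding lemma) confirming that no entry of the canonical witness other than the designated ones can be a pinnacle.
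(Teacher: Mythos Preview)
Your proof is correct and follows essentially the same route as the paper: build the canonical-witness permutation in a suitably enlarged $\Z_m \wr S_N$ and verify the pinnacle inequalities, the only nontrivial case being the bottom color $i=m-1$. The single difference is bookkeeping in the choice of $N$: you take $N=p_1+d+1$, which is a bit sharper than the paper's $N=2\ell+1$ with $\ell$ the largest of the $p_j$ (the paper writes $\min$, an evident slip), but the argument is otherwise the same.
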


\begin{proof}
Suppose $P = \{ \xi^{a}(p_1) \prec \dots \prec \xi^{a}(p_d)\}$ for some $a\in \{0,\dots,m-1\}$. Let $N=2\ell+1$ where $\ell = \min(p_1,\dots,p_d)$ and set $\{v_1\prec \dots\prec v_{N-d}\}=[N]\setminus \{p_1,\dots,p_d\}$. As in Definition \ref{def:witness} we can form a permutation, 
\[
w = \xi^{m-1}(v_1) \xi^{a}(p_1) \xi^{m-1}(v_2) \dots \xi^{m-1}(v_d)\xi^{a}(p_d)\xi^{m-1}(v_{d+1}) \dots \xi^{m-1}(v_{N-d}).
\]
Evidently, if $a>m-1$ then $\Pin(w) = P,$ so suppose $a=m-1$. Since $\{ p_1,\dots,p_d\} \subseteq \{1,\dots,\ell\}$, then $d\leq \ell$ in which case $N-d\leq \ell+1$. Hence, the set $\{v_1,\dots,v_{d+1}\} \subseteq \{\ell+1,\dots,N\}$ which implies that $v_j<p_i$ for all $1\leq j \leq d+1$ and $1\leq i \leq d$. Thus, $\xi^{m-1}(v_i) \prec \xi^{m-1}(p_i) \succ \xi^{m-1}(v_{i+1})$ for all $1\leq i \leq d$, so $\Pin(w) = P$.
\end{proof}

\begin{example}
    Let $P = \{\xi^2 (6),\xi^2(2), \xi^2(3)  \} \subset \I^4_6$. Following the construction in the proof of Proposition \ref{prop:colored}, we can take $\ell = 6$ so that $N=13$. Thus, setting
    \[V = \xi^3( [13]\setminus \{6,3,2\})=\{\xi^3(13),..., \xi^3(7), \xi^3(5),\xi^3(4),\xi^3(1)\}\]
    we obtain the permutation $w \in \Z_5\wr S_{13}$ with $\Pin(w) = P$ given by, 
    \[w=
    \begin{matrix} 
    &\xi^2(6)&&\xi^2(2)&&\xi^2(3)&
\\
\xi^3(13)&&\xi^3(12)&&\xi^3(11)&&\xi^3(10)\; \dots \;\xi^3(1).
    \end{matrix}
    \]
\end{example}

\begin{corollary}\label{cor:signed}
For any $1\leq i < m-1$ and $P \subset \xi^i[n]$, if $\#P \leq \lfloor \frac{n-1}{2} \rfloor$ then $P \in \APS_{\#P}(m,n)$.
\end{corollary}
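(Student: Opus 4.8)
The plan is to write down an explicit witness, obtained by specializing the canonical-witness construction of Definition \ref{def:witness}. Let $P = \{\xi^{i}(p_1) \prec \dots \prec \xi^{i}(p_d)\} \subseteq \xi^{i}[n]$ with $1 \le i \le m-2$ and $d := \#P \le \lfloor\frac{n-1}{2}\rfloor$ (the case $d=0$ being trivial, cf. Example \ref{ex:trivial}). Since $d \le \lfloor\frac{n-1}{2}\rfloor$ we have $2d+1 \le n$, hence $d < n-d$; writing $[n]\setminus\{p_1,\dots,p_d\} = \{v_1 \prec \dots \prec v_{n-d}\}$, the generalized permutation
\[
w \;=\; \xi^{m-1}(v_1)\,\xi^{i}(p_1)\,\xi^{m-1}(v_2)\,\xi^{i}(p_2)\cdots\xi^{m-1}(v_d)\,\xi^{i}(p_d)\,\xi^{m-1}(v_{d+1})\cdots\xi^{m-1}(v_{n-d})
\]
is a well-defined element of $\Z_m \wr S_n$; it is precisely $\omega_P$ in the sense of Definition \ref{def:witness} with every $a_j = i$. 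I claim $\Pin(w) = P$, which gives $P \in \APS_{\#P}(m,n)$ at once.

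The verification rests on a single feature of the total order: since $i \le m-2 < m-1$, every element of $\xi^{i}[n]$ is $\succ$ every element of $\xi^{m-1}[n]$. Consequently each $\xi^{i}(p_j)$ with $1 \le j \le d$ lies strictly above both of its neighbours $\xi^{m-1}(v_j)$ and $\xi^{m-1}(v_{j+1})$ in $w$, so it is a pinnacle. Conversely, no $\xi^{m-1}(v_j)$ is a pinnacle: for $2 \le j \le d$ its left neighbour is the larger value $\xi^{i}(p_{j-1})$; the entry $\xi^{m-1}(v_1)$ is leftmost in $w$; and on the tail $\xi^{m-1}(v_{d+1}),\dots,\xi^{m-1}(v_{n-d})$ the values strictly decrease in $\prec$ (because $v_{d+1} < \dots < v_{n-d}$ and $\xi^{m-1}(\cdot)$ reverses the integer order), while the first tail entry $\xi^{m-1}(v_{d+1})$ has the larger left neighbour $\xi^{i}(p_d)$. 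Hence no $\xi^{m-1}(v_j)$ is a local maximum, and $\Pin(w) = \{\xi^{i}(p_1),\dots,\xi^{i}(p_d)\} = P$.

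This is essentially the $a \neq m-1$ branch of the argument behind Proposition \ref{prop:colored}, where padding $[n]$ up to an odd length was needed only to accommodate the color $m-1$; here that padding is unnecessary, and the hypothesis $d \le \lfloor\frac{n-1}{2}\rfloor$ is exactly what lets the alternating-then-decreasing layout of $w$ fit inside $\Z_m \wr S_n$. I do not expect a real obstacle: the only point demanding care is the bookkeeping in the previous paragraph confirming that the non-pinnacle slots genuinely fail to be pinnacles — in particular the behaviour at the two ends of $w$ and at the seam between the alternating block and the decreasing tail.
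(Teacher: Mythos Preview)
Your proof is correct and follows exactly the paper's route: the paper's own proof is the one-liner ``this follows immediately from the proof in Proposition~\ref{prop:colored} by setting $n=N$,'' and you have simply unpacked that argument, writing down the canonical witness and using that $\xi^{i}[n]$ dominates $\xi^{m-1}[n]$ whenever $i<m-1$. One small slip in the bookkeeping: under the paper's order on $[n]$ the relation $v_1\prec\cdots\prec v_{n-d}$ means $v_1>\cdots>v_{n-d}$ as integers, so the tail $\xi^{m-1}(v_{d+1}),\dots,\xi^{m-1}(v_{n-d})$ is \emph{increasing} in $\prec$, not decreasing --- but since all you need is monotonicity (and that the seam value $\xi^{m-1}(v_{d+1})$ sits below its left neighbour $\xi^{i}(p_d)$, while the rightmost entry is $w(1)$), your conclusion $\Pin(w)=P$ is unaffected.
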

\begin{proof}
This follows immediately from the proof in Proposition \ref{prop:colored} by setting $n=N$.
\end{proof}

Let $\mathcal{P}(\I_n^m)$ denote the power set of $\I_n^m$.

\begin{definition} \label{def:psi-map}
For integers $m,n\geq 0$ and $k>0$, let $\xi = e^{\frac{2\pi i}{ m}}$ and $\zeta = e^{\frac{2\pi i}{m+k}}$ be primitive $m^{th}$ and $(m+k)^{th}$ roots of unity, respectively. The assignment $\xi^{a}(x) \mapsto \zeta^{a+k}(x)$ induces two maps:
\begin{itemize}
\item[(I)] The first is the map of groups,  
\[\psi_k: \Z_{m} \wr S_n \longrightarrow \Z_{m+k }\wr S_n\]
which acts on a permutation $w$ by sending each $w(i)=\xi^a(x)$ to $\Psi_k(w(i)):= \zeta^{a+k}(x)$ and extending multiplicatively so that
\[
w=w(n)\dots w(1) \longmapsto \Psi_k(w(n)) \dots \Psi_k(w(1))=: \Psi_k(w).
\]
\item[(II)] The second is the map of power sets,
\[
\Psi_k: \mathcal{P}(\I_n^m) \longrightarrow \mathcal{P}(\I_n^{m+k})
\]
which sends a set $S \subset \I_n^m$ with elements $\{ \xi^{a_i}(x_i)\}_{i \in I}$ to the set $\Psi_k(S) \subset \I_n^{m+k}$ comprised of elements $\{ \zeta^{a_i+k}(x_i)\}_{i \in I}$.
\[
S = \{ \xi^{a_i}(x_i)\}_{i \in I} \longmapsto \{ \zeta^{a_i+k}(x_i)\}_{i \in I}=: \Psi_k(S)
\]
\end{itemize}

\begin{remark}
It is important to note that the map $\psi_k$ in Definition \ref{def:psi-map} is not group homomorphism. Indeed, given $w, \sigma$ in $\Z_m\wr S_n$ sending $w(i)=\xi^{a_i}(x_i)$ and $\sigma(j)=\xi^{b_j}(i_j)$ then for each $j \in [n]$:
\[\psi_k(w \circ \sigma)(j) = \zeta^{b_j+a_{i_j}+k}(x_{i_j})\neq \zeta^{b_j+a_{i_j}+2k}(x_{i_j}) =(\psi_k(w) \circ \psi_k(\sigma))(j).\] 
\end{remark}
\end{definition}

While this property would be nice to have, we do not need it for the purposes of this paper. Far more importantly, $\psi_k$ is injective, and as we will see shortly, \emph{preserves} canonical witnesses.

\begin{lemma} \label{lem: imageSize}
    For any subset $S$, we have $\#S = \# \Psi_k(S)$.
\end{lemma}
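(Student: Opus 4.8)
The statement to prove is that $\Psi_k$ preserves cardinality, i.e. $\#S = \#\Psi_k(S)$ for any subset $S \subseteq \I_n^m$.

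The plan is to show that $\Psi_k$ restricted to $\I_n^m$ is an injective set map into $\I_n^{m+k}$, from which the cardinality claim follows immediately by restricting this injection to the finite subset $S$. Concretely, $\Psi_k$ sends an element $\xi^a(x) \in \I_n^m$ (with $0 \le a \le m-1$ and $x \in [n]$) to $\zeta^{a+k}(x) \in \I_n^{m+k}$. The key observation is that an element of $\I_n^{m+k}$ has a \emph{unique} representation of the form $\zeta^b(y)$ with $0 \le b \le m+k-1$ and $y \in [n]$ — this is precisely the defining parametrization of $\I_n^{m+k}$. Since $0 \le a \le m-1$ and $k > 0$, we have $k \le a+k \le m+k-1$, so the exponent $a+k$ is already a legal residue (no reduction modulo $m+k$ is needed), and the norm $y = x$ is recorded unchanged.

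First I would note that $\Psi_k$ is well-defined as a map $\I_n^m \to \I_n^{m+k}$, precisely because $a+k$ lies in the valid range $\{0, 1, \dots, m+k-1\}$. Then I would verify injectivity: suppose $\Psi_k(\xi^{a}(x)) = \Psi_k(\xi^{a'}(x'))$, i.e. $\zeta^{a+k}(x) = \zeta^{a'+k}(x')$. By uniqueness of the $\zeta$-representation with exponents in $\{0,\dots,m+k-1\}$, we get $a+k = a'+k$ and $x = x'$, hence $a = a'$ and the two original elements coincide. Therefore $\Psi_k|_{\I_n^m}$ is an injection. Applying this to the elements of $S = \{\xi^{a_i}(x_i)\}_{i \in I}$, the indexing set $I$ maps bijectively onto the index set of $\Psi_k(S) = \{\zeta^{a_i+k}(x_i)\}_{i\in I}$ with no collisions, so $\#\Psi_k(S) = \#I = \#S$.

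There is essentially no obstacle here; the only subtlety worth flagging explicitly is that one must use $k > 0$ (guaranteed by the hypothesis in Definition \ref{def:psi-map}) to ensure $a + k \le m+k-1$ is automatically a reduced residue, so that distinct $(a,x)$ pairs genuinely produce distinct symbols $\zeta^{a+k}(x)$ — if one were allowed $k \le 0$ or reductions modulo $m+k$, two different exponents could collapse. With that in place the proof is a one-line consequence of the uniqueness of the standard parametrization of $\I_n^{m+k}$.
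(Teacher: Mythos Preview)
Your proof is correct and follows the same approach as the paper: both establish that the element-wise assignment $\xi^a(x) \mapsto \zeta^{a+k}(x)$ is injective, whence cardinalities are preserved. Your version is more careful in spelling out why the exponent $a+k$ lands in the valid range $\{0,\dots,m+k-1\}$ without reduction, whereas the paper simply asserts that distinct inputs yield distinct outputs; one minor quibble is that the inequality $a+k \le m+k-1$ follows from $a \le m-1$ alone and does not actually require $k>0$.
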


\begin{proof}
Since evidently for any $x\neq y \in [n]$ and $a\neq b \in \{0,\dots,m-1\}$ we have that $\zeta^{a+k}(x) \neq \zeta^{b+k}(y)$, then necessarily $S$ and $\Psi_k(S)$ have the same cardinality.
\end{proof}

\begin{lemma}\label{lem:injective}
For any nonnegative integer $k$, the map $\Psi_k$ is injective. Thus, $\psi_k$ is also injective. 
\end{lemma}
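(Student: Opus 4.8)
The plan is to observe that $\Psi_k$ is nothing more than the map on power sets induced by an injective function on the underlying alphabet $\I_n^m$, and then invoke the elementary fact that injectivity of a function lifts to injectivity of the induced map on power sets (and, for $\psi_k$, on tuples).

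First I would isolate the element-level map $\iota_k \colon \I_n^m \to \I_n^{m+k}$ defined by $\xi^a(x) \mapsto \zeta^{a+k}(x)$ for $a \in \{0,\dots,m-1\}$ and $x \in [n]$. The content of the proof of Lemma~\ref{lem: imageSize} is precisely that $\iota_k$ is injective: since $0 \le a \le m-1$ we have $k \le a+k \le m+k-1$, so the exponent $a+k$ already lies in the standard range $\{0,\dots,m+k-1\}$, whence distinct pairs $(a,x)$ are sent to distinct elements $\zeta^{a+k}(x)$. By the very definition of $\Psi_k$ in Definition~\ref{def:psi-map}(II), we have $\Psi_k(S) = \iota_k(S)$ (the forward image) for every $S \subseteq \I_n^m$.

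Next I would record the general principle: if $f\colon A \to B$ is injective then $S \mapsto f(S)$ is an injective map $\mathcal{P}(A)\to\mathcal{P}(B)$. Indeed, if $f(S) = f(T)$ and $a \in S$, then $f(a) \in f(T)$, so $f(a) = f(t)$ for some $t \in T$, and injectivity of $f$ gives $a = t \in T$; thus $S \subseteq T$, and by symmetry $S = T$. Applying this with $f = \iota_k$ shows $\Psi_k$ is injective. For the group map $\psi_k$, recall that a generalized permutation $w \in \Z_m\wr S_n$ is completely determined by the tuple $(w(1),\dots,w(n))$ via its one-line notation, and by construction $\psi_k(w)$ is determined by $(\iota_k(w(1)),\dots,\iota_k(w(n)))$. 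If $\psi_k(w) = \psi_k(w')$ then $\iota_k(w(j)) = \iota_k(w'(j))$ for all $j$, so injectivity of $\iota_k$ forces $w(j) = w'(j)$ for all $j$, hence $w = w'$.

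There is no genuine obstacle in this argument; the only point needing (minor) care is the bookkeeping on exponent ranges that guarantees $\iota_k$ is well defined and injective, and that is already supplied by Lemma~\ref{lem: imageSize}. The write-up can therefore be kept short, essentially citing that lemma for the injectivity of $\iota_k$ and then applying the forward-image principle twice, once for $\Psi_k$ and once (coordinatewise) for $\psi_k$.
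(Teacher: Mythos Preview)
Your proposal is correct and follows essentially the same approach as the paper: both arguments reduce to the injectivity of the element-level assignment $\xi^a(x)\mapsto\zeta^{a+k}(x)$ and then lift this to sets (for $\Psi_k$) and to one-line tuples (for $\psi_k$). The only cosmetic difference is that you package the lift via the general forward-image principle, whereas the paper lists the elements of $S$ and $S'$ in increasing order and matches them index by index; both are valid and neither introduces any new idea.
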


\begin{proof}
    Suppose $S,S' \subseteq \I^m_n$ are two subsets such that $S =\{ \xi^{a_1}(x_1)\prec \dots \prec \xi^{a_d}(x_d)\}$ and $S' =\{ \xi^{b_1}(y_1)\prec \dots \prec \xi^{b_e}(y_e)\}$ with $\Psi_k(S) = \Psi_k(S')$. Then by Lemma \ref{lem: imageSize} then $e=d$. Thus, $\zeta^{a_i+k}(x_i) = \zeta^{b_i+k}(y_i)$ for all $1\leq i \leq d$, which implies that $x_i = y_i$ and $a_i = b_i$ for all $i$; consequently, $S=S'$.
    Since $\psi_k(w)$ is defined pointwise on each $w(i)$ for any $w \in Z_m \wr S_n$, then the second claim follows. 
\end{proof}

\begin{theorem} \label{thm: witnessBiject}
    For any $k \geq 0$ the map $\psi_k: \Z_m \wr S_n \to \Z_{m+k}\wr S_n$ preserves witnesses. That is, $\psi_k(w)$ is a witness for $\Psi_k(P)$ for any witness $w$ of $P \in \APS(m,n)$. In particular, $\psi_k(\omega_P) = \omega_{\Psi_k(P)}$.
\end{theorem}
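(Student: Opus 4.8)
The plan is to isolate one structural fact, namely that $\Psi_k$ is a strict order embedding for the total orders $\prec$, and then deduce both assertions by bookkeeping. Recall that on $\I_n^m$ we have $\xi^a(x)\prec\xi^b(y)$ exactly when $a>b$, or $a=b$ and $x>y$. The map $\Psi_k\colon\xi^a(x)\mapsto\zeta^{a+k}(x)$ adds the constant $k$ to every exponent and leaves the norm $x$ fixed; moreover $a\in\{0,\dots,m-1\}$ forces $a+k\in\{k,\dots,m+k-1\}\subseteq\{0,\dots,m+k-1\}$, so no reduction modulo $m+k$ occurs and $\Psi_k$ indeed lands in $\I_n^{m+k}$. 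Since $a>b\iff a+k>b+k$ and the norm comparison is untouched, it follows that $u\prec v\iff\Psi_k(u)\prec\Psi_k(v)$ for all $u,v\in\I_n^m$. This is the only place the precise shape of $\prec$ enters, and it is the step I would check most carefully; everything else is formal.

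Granting this, witness preservation is immediate. If $w=w(n)\cdots w(1)$ is a witness for $P\in\APS(m,n)$, then $\psi_k(w)$ has a one-line word of the same length $n$ with $\psi_k(w)(i)=\Psi_k(w(i))$ for every $i$, by Definition~\ref{def:psi-map}. For each interior index $i$ (that is, $1<i<n$), the order embedding gives $\Psi_k(w(i+1))\prec\Psi_k(w(i))\succ\Psi_k(w(i-1))$ if and only if $w(i+1)\prec w(i)\succ w(i-1)$, so $\psi_k(w)$ has a pinnacle at position $i$ precisely when $w$ does. Applying the set map $\Psi_k$ to the resulting pinnacle values yields
\[
\Pin(\psi_k(w))=\{\,\Psi_k(w(i))\;:\;w(i+1)\prec w(i)\succ w(i-1)\,\}=\Psi_k(\Pin(w))=\Psi_k(P),
\]
so $\Psi_k(P)\in\APS(m+k,n)$ with witness $\psi_k(w)$.

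For the final clause I would compare the two canonical witnesses entry by entry. Write $P=\{\xi^{a_1}(p_1)\prec\dots\prec\xi^{a_d}(p_d)\}$ and $[n]\setminus\{p_1,\dots,p_d\}=\{v_1\prec\dots\prec v_{n-d}\}$. The order embedding shows that $\Psi_k(P)=\{\zeta^{a_1+k}(p_1)\prec\dots\prec\zeta^{a_d+k}(p_d)\}$ is already listed in increasing order, and since $\Psi_k$ preserves norms we have $|\Psi_k(P)|=\{p_1,\dots,p_d\}=|P|$, so the sorted complement $\{v_1\prec\dots\prec v_{n-d}\}$ feeding the recipe of Definition~\ref{def:witness} for $\Psi_k(P)$ in $\Z_{m+k}\wr S_n$ is literally the same one that feeds the recipe for $\omega_P$. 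Applying $\psi_k$ pointwise to the three families of values in Definition~\ref{def:witness} replaces each $\xi^{m-1}(v)$ by $\zeta^{(m+k)-1}(v)$ and each $\xi^{a_i}(p_i)$ by $\zeta^{a_i+k}(p_i)$, which are exactly the values defining $\omega_{\Psi_k(P)}$; hence $\psi_k(\omega_P)=\omega_{\Psi_k(P)}$. Apart from keeping the reverse-flavored ordering $\prec$ and this indexing data straight, I anticipate no real obstacle.
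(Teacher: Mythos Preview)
Your proposal is correct and follows essentially the same approach as the paper: both arguments rest on the observation that uniformly shifting all exponents by $k$ preserves the total order $\prec$, so pinnacle positions are unchanged and the canonical witness recipe transports verbatim. Your version is slightly more explicit than the paper's (you state the order-embedding property and the ``if and only if'' for pinnacle positions outright, which cleanly rules out any new pinnacles appearing in $\psi_k(w)$), but the underlying idea is identical.
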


\begin{proof}
Suppose $P \in \APS(m,n)$ with  witness $w$ and pinnacle set $\{\xi^{a_1}(x_1) , \dots ,\xi^{a_d}(x_d)\}$. Then, since $\Psi_k(P)=\{ \zeta^{a_1+k}(x_1), \dots , \zeta^{a_d+k}(x_d)\}$ and all powers of $\zeta$ are increased uniformly, so that any non-pinnacle values $\xi^{b_i}(y_i) \prec \xi^{a_i}(x_i) \succ \xi^{b_{i+1}}(y_{i+1})$ are mapped to $\zeta^{b_i+k}(y_i) \prec \zeta^{a_i+k}(x_i) \succ \zeta^{b_{i+1}+k}(y_{i+1})$, then $\psi_k(w)$ is indeed a witness for $\Psi_k(P)$. 

In the special case when $w = \omega_P$ as in Definition \ref{def:witness}, then since $\zeta^{a_{i}+k}(p_{i})\prec \zeta^{a_{i+1}+k}(p_{i+1})$ for all $1\leq i <d$ and $\zeta^{m-1+k}(v_j) \prec \zeta^{m-1+k}(v_{j+1})$ for all $1\leq j <n-d$, it follows that $\psi_k(\omega_P)$ is in fact the canonical witness for $\Psi_k(P)$.
\end{proof}

\begin{example}
Let $\xi=e^{2 \pi i / 5}$ and $\zeta = e^{2\pi i/8}$. Consider $P = \{\xi^4(3), \xi^3(2)\} \in \APS(5, 5)$ with canonical witness $\omega_P = \xi^4(5)\xi^4(3)\xi^4(4)\xi^3(2)\xi^4(1)$. Then if $k=3$, the map $\psi_3$ sends $\omega_P$ to the permutation in $\Z_8 \wr S_5$ given by
\[
\psi_3(\omega_P) = \zeta^7(5)\zeta^7(3)\zeta^7(4)\zeta^6(2)\zeta^7(1).
\]
As expected $\psi_3(\omega_P)= \omega_{\Psi_3(P)}$. 

\end{example}

\begin{corollary}\label{cor: APSimage}
For any nonnegative integer $k$, 
$\Psi_k(\APS_d(m,n)) =\APS_d(m+k,n) \cap \mathcal{P}(\I_n^{m+k}\setminus \I_n^{k})$. 
\end{corollary}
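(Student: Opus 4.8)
The plan is to prove the two inclusions of the claimed set equality; injectivity of $\Psi_k$ is already available from Lemma~\ref{lem:injective}, so all that is genuinely at stake is the description of the image. The observation driving both directions is the one already exploited in the proof of Theorem~\ref{thm: witnessBiject}: the assignment $\xi^a(x)\mapsto \zeta^{a+k}(x)$ is an \emph{order isomorphism} from $\I_n^m$ onto $\I_n^{m+k}\setminus\I_n^k$, because in both total orders one compares first by color (a higher color being smaller) and then by absolute value, and adding the constant $k$ to every color respects this comparison.

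For the inclusion $\Psi_k(\APS_d(m,n))\subseteq \APS_d(m+k,n)\cap\mathcal{P}(\I_n^{m+k}\setminus\I_n^k)$, I would take $P\in\APS_d(m,n)$ with a witness $w$. By Theorem~\ref{thm: witnessBiject}, $\psi_k(w)$ is a witness for $\Psi_k(P)$, so $\Psi_k(P)$ is admissible in $\Z_{m+k}\wr S_n$; by Lemma~\ref{lem: imageSize} it has cardinality $\#P\le d$, hence $\Psi_k(P)\in\APS_d(m+k,n)$. Moreover every element of $\Psi_k(P)$ has color $a+k\ge k$, which is exactly the assertion $\Psi_k(P)\subseteq\I_n^{m+k}\setminus\I_n^k$. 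This direction is routine.

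For the reverse inclusion, let $Q\in\APS_d(m+k,n)$ with $Q\subseteq\I_n^{m+k}\setminus\I_n^k$. Since every element of $Q$ has color at least $k$, the order isomorphism above is invertible on $Q$, producing a unique $P\subseteq\I_n^m$ with $\Psi_k(P)=Q$ (subtract $k$ from each color). It remains to check $P\in\APS_d(m,n)$: the cardinality is $\#P=\#Q\le d$ by Lemma~\ref{lem: imageSize}, while $\#Q\le\floor{(n-1)/2}$ by Lemma~\ref{lem:maxLength}, so the canonical witnesses below are well defined. Now consider the canonical witness $\omega_Q$ of $Q$, which is a genuine witness for $Q$ by the lemma following Definition~\ref{def:witness}: its pinnacle entries lie in $Q$ and its non-pinnacle entries are the $\zeta^{m+k-1}(v_j)$, so \emph{every} entry of $\omega_Q$ lies in $\I_n^{m+k}\setminus\I_n^k$. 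Applying the inverse order isomorphism entrywise to $\omega_Q$ leaves the underlying $S_n$-part untouched and hence yields a well-defined $w\in\Z_m\wr S_n$; since an order isomorphism preserves the pinnacle condition position by position, $\Pin(w)$ is the image of $\Pin(\omega_Q)=Q$, that is $\Pin(w)=P$. Thus $P$ is admissible, $P\in\APS_d(m,n)$, and $Q=\Psi_k(P)$ lies in $\Psi_k(\APS_d(m,n))$, completing the equality.

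The only delicate points — and the ones I expect to need care in the writeup — are verifying that $\xi^a(x)\mapsto\zeta^{a+k}(x)$ really is an order isomorphism onto $\I_n^{m+k}\setminus\I_n^k$, and that the canonical witness $\omega_Q$ keeps all of its entries inside that sub-poset (which is precisely why one must use $\omega_Q$ rather than an arbitrary witness, whose non-pinnacle values could carry small color); once these are in place the rest is pure transport of structure. An alternative to naming the inverse map is to observe directly that $\psi_k(\omega_P)=\omega_Q$ by the same computation as in the proof of Theorem~\ref{thm: witnessBiject}, whence $\Psi_k(\Pin(\omega_P))=\Pin(\omega_Q)=Q=\Psi_k(P)$ and injectivity of $\Psi_k$ forces $\Pin(\omega_P)=P$.
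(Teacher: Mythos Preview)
Your proposal is correct, and in fact it is more complete than the paper's own proof. The paper's argument only writes out the forward inclusion $\Psi_k(\APS_d(m,n))\subseteq \APS_d(m+k,n)\cap\mathcal{P}(\I_n^{m+k}\setminus\I_n^k)$, citing Theorem~\ref{thm: witnessBiject} to obtain a witness $\psi_k(\omega_P)$ for $\Psi_k(P)$ and noting that the colors land in the range $\{k,\dots,m+k-1\}$; the reverse inclusion is left implicit.

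Your treatment of the reverse inclusion is the natural one and supplies what the paper omits: given $Q$ admissible in $\Z_{m+k}\wr S_n$ with all colors $\ge k$, you pull back via the canonical witness $\omega_Q$, whose non-pinnacle entries carry the maximal color $m+k-1\ge k$ and hence lie in the image of $\psi_k$. Your emphasis that an arbitrary witness would not do here (its non-pinnacle colors could be $<k$) is exactly the subtlety, and your framing of $\xi^a(x)\mapsto\zeta^{a+k}(x)$ as an order isomorphism onto $\I_n^{m+k}\setminus\I_n^k$ is the clean way to see that pinnacles are preserved under pullback. The alternative route you sketch at the end (forming $\omega_P$ formally, noting $\psi_k(\omega_P)=\omega_Q$, and using injectivity) is equally valid and is in the spirit of how the paper uses $\psi_k(\omega_P)=\omega_{\Psi_k(P)}$ elsewhere.
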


\begin{proof}
By construction for any $P \in \APS(m,n)$ we have that $\Psi_k(P) \subset \bigcup_{i=k}^{m+k-1} \zeta^i[n]$. Moreover, since by Theorem \ref{thm: witnessBiject} we also have that $\psi_{k}(\omega_P) = \omega_{\Psi_k(P)}$, then $\Psi_k(P)$ is admissible in $\Z_{m+k}\wr S_n$ with witness $\psi_{k}(\omega_P)$. 
\end{proof}

\begin{corollary} \label{cor: APSFiltrationM}
 For any positive integers $m,n,k,d$ the map $\Psi_k$ induces the following filtration on $\APS_d(m,n)$, 
 \[
\Psi_{m-1}(\APS_d(1, n)) \subset\Psi_{m-2}(\APS_d(2, n)) \subset \dots \subset\Psi_1(\APS_d(m-1, n)) \subset \APS_d(m, n).
\]
\end{corollary}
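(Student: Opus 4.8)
The plan is to rewrite every term of the chain using Corollary~\ref{cor: APSimage}, after which each inclusion becomes a transparent containment of power sets. First I would fix the reading of the notation: for $1 \le j \le m-1$ the symbol $\Psi_{m-j}$ appearing in the statement is the power-set map of Definition~\ref{def:psi-map} whose source is $\Z_j \wr S_n$ --- that is, with ``$m$'' equal to $j$ and ``$k$'' equal to $m-j$ --- so that $\Psi_{m-j} \colon \mathcal{P}(\I_n^j) \to \mathcal{P}(\I_n^m)$ and the hypothesis $k = m-j \ge 1$ of that definition is satisfied.

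With this dictionary in place, I would apply Corollary~\ref{cor: APSimage}, which gives for every $1 \le j \le m-1$
\[
\Psi_{m-j}\bigl(\APS_d(j,n)\bigr) \;=\; \APS_d(m,n) \,\cap\, \mathcal{P}\bigl(\I_n^m \setminus \I_n^{\,m-j}\bigr).
\]
Then I would invoke the obvious monotonicity: as $j$ increases from $1$ to $m-1$ the index $m-j$ decreases, so $\I_n^{\,m-j} \subseteq \I_n^m$ shrinks, the complements $\I_n^m \setminus \I_n^{\,m-j}$ grow, and hence so do the sets $\APS_d(m,n) \cap \mathcal{P}(\I_n^m \setminus \I_n^{\,m-j})$. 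Stringing these containments together is exactly the asserted filtration
\[
\Psi_{m-1}(\APS_d(1,n)) \;\subset\; \cdots \;\subset\; \Psi_1(\APS_d(m-1,n)),
\]
and the final link $\Psi_1(\APS_d(m-1,n)) = \APS_d(m,n) \cap \mathcal{P}(\I_n^m \setminus \I_n^{1}) \subseteq \APS_d(m,n)$ is immediate (when $m = 1$ the chain is empty and there is nothing to prove).

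There is essentially no obstacle here; the only point that demands attention is bookkeeping, since the subscript of $\Psi$ does not by itself record which generalized symmetric group is its source, so one must consistently interpret $\Psi_{m-j}$ as emanating from $\Z_j \wr S_n$ before invoking Corollary~\ref{cor: APSimage}. Alternatively, one could sidestep Corollary~\ref{cor: APSimage} for the interior inclusions: the power-set maps satisfy $\Psi_{m-j-1} \circ \Psi_1 = \Psi_{m-j}$ on $\mathcal{P}(\I_n^j)$, because both shift the exponents --- which lie in $\{0, \dots, j-1\}$ --- by the same total amount with no modular wraparound, while $\Psi_1$ sends $\APS_d(j,n)$ into $\APS_d(j+1,n)$ by Theorem~\ref{thm: witnessBiject} together with Lemma~\ref{lem: imageSize}; composing then gives $\Psi_{m-j}(\APS_d(j,n)) = \Psi_{m-j-1}(\Psi_1(\APS_d(j,n))) \subseteq \Psi_{m-j-1}(\APS_d(j+1,n))$.
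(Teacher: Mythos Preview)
Your proposal is correct and matches the paper's intent: the paper states this corollary with no proof, treating it as an immediate consequence of Corollary~\ref{cor: APSimage}, which is exactly your first argument. Your observation about the bookkeeping of the source of $\Psi_{m-j}$, and your alternative via the composition identity $\Psi_{m-j-1}\circ\Psi_1=\Psi_{m-j}$, are both sound and make explicit what the paper leaves implicit.
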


Recall that by \cite[Lemma 3.8]{PinSignedPermutations} for any totally ordered set $X$ and any nonnegative integer $d$, we can define
\begin{equation}\label{eq:APS=set}
\APS_d(X) = \APS_d([\#X]) := \APS_d(1,\#X)
\end{equation}
so that admissibility fundamentally depends only on the cardinality of the underlying set.

We are now ready to give a complete characterization of admissible pinnacle sets for the generalized symmetric group $\Z_m \wr S_n$.

\begin{theorem}\label{thm: pinCharacter}
    A set $P=\bigcup_{i=0}^{m-1}\pi_i(P) \subseteq \I_n^m$ is contained in $\APS_d(m,n)$ 
    if and only if:
    \begin{enumerate}
        \item $d\leq \floor{\frac{n-1}{2}}$,
        \item $|\pi_i(P)| \bigcap |\pi_j(P)| =  \emptyset  \ \forall j \neq i \in [m-1]$, and
    \item[(3)] $P\setminus \pi_0(P) \in \Psi_1\Big(\APS_{d-\#\pi_0(P)}\Big(m-1,n-\#\pi_0(P)\Big)\Big)$.
    \end{enumerate}
\end{theorem}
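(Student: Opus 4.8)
The plan is to prove the biconditional by establishing the necessity of the three conditions for any $P \in \APS_d(m,n)$ and then the sufficiency by explicitly building a witness (the canonical one) out of the data guaranteed by the conditions. For necessity, suppose $P = \Pin(w)$ for some witness $w \in \Z_m \wr S_n$ with $\#P \le d$. Condition (1) is immediate from Lemma \ref{lem:maxLength}. Condition (2) is a restatement of Lemma \ref{lem: multiplicity}: if $x \in |\pi_i(P)| \cap |\pi_j(P)|$ with $i \ne j$, then both $\xi^i(x)$ and $\xi^j(x)$ lie in $P$, forcing two distinct preimages whose $\xi$-translates collide, contradicting bijectivity. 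For condition (3), the key observation is that the $\xi^0$-colored part $\pi_0(P)$ behaves differently from the rest: the elements of $P \setminus \pi_0(P)$ all lie in $\bigcup_{i=1}^{m-1}\xi^i[n]$, which is exactly the image of $\Psi_1$ on $\mathcal P(\I_{n'}^{m-1})$ for an appropriate $n'$. So I would argue that $\Psi_1^{-1}(P \setminus \pi_0(P))$ — which is well-defined and of cardinality $d - \#\pi_0(P)$ by Lemmas \ref{lem: imageSize} and \ref{lem:injective} — is an admissible pinnacle set in $\Z_{m-1}\wr S_{n - \#\pi_0(P)}$. To see admissibility after removing $\pi_0(P)$, I would take the canonical witness $\omega_P$ and show that deleting the $\binom{}{}$ positions carrying the $\xi^0$-pinnacles together with one adjacent non-pinnacle each, then relabeling the surviving absolute values $1, \dots, n-\#\pi_0(P)$ in an order-preserving way, produces a valid witness in the smaller group; the point is that $\xi^0$-pinnacles are the \emph{largest} possible values (in the $\prec$ order, the $\xi^0[n]$ block is at the bottom, i.e., largest), so they sit at the extreme positions and can be excised without disturbing the remaining peak structure.

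For sufficiency, assume (1)–(3) hold. By (3) and Corollary \ref{cor: APSimage}, there is a witness $w' \in \Z_{m-1}\wr S_{n-\#\pi_0(P)}$ with $\Pin(w') = \Psi_1^{-1}(P \setminus \pi_0(P))$, and by Theorem \ref{thm: witnessBiject} its image $\psi_1(w')$ is a witness for $P \setminus \pi_0(P)$ inside $\Z_m \wr S_{n - \#\pi_0(P)}$, using only colors $\{1,\dots,m-1\}$ and absolute values from some $(n-\#\pi_0(P))$-subset of $[n]$. I would then reinsert the $\xi^0$-colored pinnacles of $\pi_0(P)$. Here the natural move is to directly construct $\omega_P$ as in Definition \ref{def:witness}: interleave all $d$ pinnacle values $\xi^{a_1}(p_1) \prec \dots \prec \xi^{a_d}(p_d)$ (now including the $\xi^0$ ones, which are the $\prec$-largest and hence appear at the right end of the pinnacle list) with $\xi^{m-1}$-colored non-pinnacles drawn from $[n]\setminus\{p_1,\dots,p_d\}$. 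Condition (1) guarantees $d < n-d$ so there are enough non-pinnacles, and condition (2) guarantees the $p_i$ are distinct so $\omega_P$ is a well-defined permutation in $\Z_m \wr S_n$. That $\Pin(\omega_P) = P$ is exactly the content of the lemma immediately following Definition \ref{def:witness}. I should double-check that conditions (2) and (3) together are what is needed for the $p_i$ to be distinct: (3) gives distinctness of absolute values within $P\setminus\pi_0(P)$ after applying $\Psi_1^{-1}$ and invoking Lemma \ref{lem: multiplicity} in the smaller group, while (2) with $j = 0$ separates $|\pi_0(P)|$ from all the others, and Lemma \ref{lem: multiplicity} in $\Z_m \wr S_n$ handles distinctness inside $\pi_0(P)$ itself.

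The main obstacle I anticipate is the necessity direction of condition (3), specifically showing that after stripping the $\xi^0$-pinnacles from a witness one still has a witness in the smaller group — one must be careful that removing a pinnacle position and one of its flanking valleys does not accidentally create a \emph{new} pinnacle among the survivors, and that the order-preserving relabeling of absolute values genuinely lands in $\Z_{m-1}\wr S_{n-\#\pi_0(P)}$ (i.e., that the $\xi^0$ color is never forced on a surviving value). Working with the canonical witness $\omega_P$ rather than an arbitrary witness simplifies this considerably: in $\omega_P$ the $\xi^0$-pinnacles are consecutive at the right end of the pinnacle block, so the excision is clean. I would therefore route the necessity argument through $\omega_P$, invoking the lemma after Definition \ref{def:witness} to know $\omega_P$ is available, rather than arguing about an arbitrary witness. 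The remaining steps — cardinality bookkeeping, the distinctness checks, and the identification of $\Psi_1$'s image with $\bigcup_{i=1}^{m-1}\xi^i[n]$ — are routine given Lemmas \ref{lem: imageSize}, \ref{lem:injective}, \ref{lem: multiplicity} and Corollary \ref{cor: APSimage}.
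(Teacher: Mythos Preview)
Your overall strategy matches the paper's: for necessity, strip the $\xi^0$-pinnacles from the canonical witness $\omega_P$; for sufficiency, start from a witness for $P\setminus\pi_0(P)$ guaranteed by (3) and re-insert the $\xi^0$-pinnacles. Two issues, one minor and one genuine.

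In the necessity step, you should delete \emph{only} the $\xi^0$-pinnacle entries from $\omega_P$, not ``one adjacent non-pinnacle each.'' Removing just those $\#\pi_0(P)$ values leaves a word of length $n-\#\pi_0(P)$ on the absolute values $[n]\setminus|\pi_0(P)|$ with colors in $\{1,\dots,m-1\}$; via \eqref{eq:APS=set} and $\Psi_1^{-1}$ this is exactly the witness you need in $\Z_{m-1}\wr S_{n-\#\pi_0(P)}$. Your own count of $n-\#\pi_0(P)$ surviving absolute values is the correct one and is inconsistent with also removing non-pinnacles (which would land you in $S_{n-2\#\pi_0(P)}$, not matching condition (3)). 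Since in $\omega_P$ the $\xi^0$-pinnacles sit consecutively at the right end of the pinnacle block and the tail of non-pinnacles is $\prec$-increasing, deleting them alone creates no new pinnacles --- this is the clean excision you alluded to.

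The real gap is in the sufficiency step: your appeal to the lemma following Definition~\ref{def:witness} is circular. That lemma is stated for $P\in\APS_d(m,n)$, which is precisely what you are trying to establish, and its proof genuinely uses admissibility to handle the $a_i=m-1$ pinnacles. The paper avoids this by applying the canonical-witness lemma only to $P\setminus\pi_0(P)$ --- which condition (3) does certify as admissible --- to obtain $\omega_{P\setminus\pi_0(P)}$ explicitly. It then inserts the elements of $\pi_0(P)$ into the tail and verifies \emph{directly} that the result has pinnacle set $P$: each inserted $\xi^0(p_i)$ is automatically $\succ$ any $\xi^{m-1}(v_j)$, condition (1) guarantees the tail is long enough, and the $\xi^{m-1}$-colored pinnacles inherited from $\omega_{P\setminus\pi_0(P)}$ remain pinnacles. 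You already set up this route with $\psi_1(w')$; you just need to carry out the insertion and direct check rather than shortcut through the lemma applied to $P$ itself.
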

\begin{proof}
Suppose $P \in \APS_d(m,n)$. Then by Lemma \ref{lem:maxLength} and Lemma \ref{lem: multiplicity} conditions (1) and (2) hold. 
To see condition (3) is also true suppose $P$ has canonical witness, $\omega_P$ as in Definition \ref{def:witness}. Since the pinnacles of $P$ appear in increasing order from left to right in $\omega_P$, then removing the values in $\pi_0(P)$ from $\omega_P$ yields a permutation $\sigma$ on the set $\bigcup_{j=1}^{m-1} \xi^{j}([n]\setminus \pi_0(P))$ with $\Pin(\sigma)=P\setminus \pi_0(P)$.

Hence, by Corollary \ref{cor: APSimage} and Equation \ref{eq:APS=set},
\[P\setminus \pi_0(P) \in \APS_{d-\#\pi_0(P)}\left( \bigcup_{j=1}^{m-1} \xi^{j}\Big([n]\setminus \pi_0(P)\Big)\right)=\Psi_1\Big(\APS_{d-\#\pi_0(P)}\Big(m-1,n-\#\pi_0(P)\Big)\Big).\]

Now assume that $P=\bigcup_{i=0}^{m-1}\pi_i(P) \subseteq \I_n^m$ satisfies conditions (1)-(3).
Then if $P = \{ \xi^{a_1}(p_1)\prec \dots \prec \xi^{d}(p_d)\}$ and $P\setminus \pi_0(P) = \{ \xi^{a_1}(p_1)\prec \dots \prec \xi^{\ell}(p_{\ell})\}$ for some $\ell\leq d$, by condition (3) $P\setminus \pi_0(P)$ must have a canonical witness of the form, 
\begin{equation}\label{eq:smallwitness} 
\omega_{P\setminus\pi_0(P)}=\begin{matrix}&\xi^{a_1}(p_1)&&\xi^{a_\ell}(p_{\ell})\\
\xi^{m-1}(v_1)&& \dots \xi^{m-1}(v_{\ell})&& \xi^{m-1}(v_{\ell+1})\dots \xi^{m-1}(v_{n-d})
\end{matrix}. 
\end{equation}
Since by (1) $n-d-\ell > d-(\ell-1)$ and evidently, any $\xi^0(p_i) \in \pi_0(P)$ satisfies $\xi^{m-1}(v_i) \prec \xi^0(p_i) \succ \xi^{m-1}(v_{i+1})$ for any $\ell+1\leq i \leq d$, then the elements in $\pi_0(P)$ can be inserted into the tail of $\omega_{P\setminus\pi_0(P)}$ as follows: 
\begin{equation*}
\omega_{P\setminus\pi_0(P)}\sqcup \pi_0(P)=\begin{matrix}&&\xi^{0}(p_\ell)&&\xi^{0}(p_{d})
\\
\dots&\xi^{m-1}(v_{\ell+1})&& \dots \xi^{m-1}(v_{d})&& \xi^{m-1}(v_{d+1})\dots \xi^{m-1}(v_{n-d})
\end{matrix}. 
\end{equation*}
Since by (2) none of the pinnacle values are repeated, then in fact $\Pin\Big(\omega_{P\setminus\pi_0(P)}\sqcup \pi_0(P)\Big) =P$ with
$\omega_{P\setminus\pi_0(P)}\sqcup \pi_0(P) = \omega_P$ so that $P \in \APS_d(m,n)$.

\end{proof}

It turns out that there is a symmetry between the necessary condition in Theorem \ref{thm: pinCharacter} for the subset $\pi_0(P)$ and the subset $\pi_{m-1}(P)$, as either of these determine the admissibility of the set.

\begin{corollary}\label{cor:pinCharacter}
    A set $P=\bigcup_{i=0}^{m-1}\pi_i(P) \subseteq \I_n^m$ is contained in $\APS_d(m,n)$ 
    if and only if:
    \begin{enumerate}
        \item $d\leq \floor{\frac{n-1}{2}}$,
        \item $|\pi_i(P)| \bigcap |\pi_j(P)| =  \emptyset  \ \forall j \neq i \in [m-1]$, and
        \item $\pi_{m-1}(P) \in \Psi_{m-1}\left(\APS_{\#\pi_{m-1}}\left(1, n- \sum_{j\neq m-1}\# \pi_j(P)\right)\right).$
    \end{enumerate}
\end{corollary}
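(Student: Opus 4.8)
The plan is to deduce Corollary~\ref{cor:pinCharacter} from Theorem~\ref{thm: pinCharacter} by induction on $m$, iterating the ``peel off the top color $\pi_0$'' step until only $\pi_{m-1}(P)$ remains. Write $c_i:=\#\pi_i(P)$. Conditions (1) and (2) of the Corollary are verbatim conditions (1) and (2) of Theorem~\ref{thm: pinCharacter}, so it is enough to show that, granting (1) and (2), condition (3) of Theorem~\ref{thm: pinCharacter} is equivalent to condition (3) of the Corollary.

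When $m=1$ there is nothing to prove: then $\pi_{m-1}(P)=\pi_0(P)=P$, the map $\Psi_{m-1}=\Psi_0$ is the identity, and the Corollary is just the definition of $\APS_d(1,n)$ read through \eqref{eq:APS=set}. For the inductive step, Theorem~\ref{thm: pinCharacter} rewrites membership as
\[
P\in\APS_d(m,n)\iff (1),\ (2),\ \text{and}\quad P':=\Psi_1^{-1}\!\big(P\setminus\pi_0(P)\big)\in\APS_{d-c_0}\big(m-1,\,n-c_0\big).
\]
Before invoking the inductive hypothesis on $P'$ I check the analogues of (1) and (2) for it: since $|\pi_j(P')|=|\pi_{j+1}(P)|$ for $0\le j\le m-2$, norm--disjointness carries over from (2); and $2(d-c_0)+1\le n-c_0$ is immediate from $2d+1\le n$. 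Hence the inductive hypothesis (Corollary~\ref{cor:pinCharacter} for $m-1$ colors) applies to $P'$, and since its conditions (1) and (2) for $P'$ now hold automatically, the membership of $P'$ above is equivalent to its condition (3), namely
\[
\pi_{m-2}(P')\in\Psi_{m-2}\Big(\APS_{\#\pi_{m-2}(P')}\big(1,\ (n-c_0)-\textstyle\sum_{0\le j\le m-3}\#\pi_j(P')\big)\Big).
\]

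It remains to identify this with condition (3) of the Corollary for $P$, and that is where the work lies. From $\pi_j(P')=\Psi_1^{-1}(\pi_{j+1}(P))$ and Lemma~\ref{lem: imageSize} one reads off $\#\pi_{m-2}(P')=c_{m-1}$ and $(n-c_0)-\sum_{0\le j\le m-3}\#\pi_j(P')=n-\sum_{j\ne m-1}c_j$, while $\Psi_k\circ\Psi_\ell=\Psi_{k+\ell}$ on the relevant power sets (immediate from Definition~\ref{def:psi-map}); applying the injective map $\Psi_1$ to the displayed membership then turns $\pi_{m-2}(P')$ into $\pi_{m-1}(P)$ and $\Psi_{m-2}$ into $\Psi_{m-1}$, producing exactly
\[
\pi_{m-1}(P)\in\Psi_{m-1}\Big(\APS_{c_{m-1}}\big(1,\ n-\textstyle\sum_{j\ne m-1}c_j\big)\Big).
\]
This is the ``symmetry'' referred to before the statement: iterating the top-color peeling $m-1$ times leaves the single bottom color $m-1$ in the role earlier played by the top color $0$. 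The delicate point throughout is the running bookkeeping of the two parameters $d$ and $n$, carried out in tandem with the convention \eqref{eq:APS=set}, which is what lets us regard $\pi_{m-1}(P)$ --- whose norms live in $[n]$ --- as a subset of a totally ordered set of the correct size $n-\sum_{j\ne m-1}c_j$. In particular one must carry along the cardinality bound $\#P\le d$ that is part of $P\in\APS_d(m,n)$: it keeps each intermediate index $d-\sum_{j<i}c_j$ from going negative, and, since $\Psi_{m-1}^{-1}(\pi_{m-1}(P))$ has cardinality exactly $c_{m-1}$, it is what lets the running index be rewritten as $c_{m-1}$ in the final form. Everything else is a direct appeal to Theorem~\ref{thm: pinCharacter} (or its $(m-1)$-color instance) together with the elementary inequalities above.
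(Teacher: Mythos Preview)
Your proof is correct and takes a genuinely different route from the paper. The paper argues directly by manipulating canonical witnesses: in the forward direction it extracts from $\omega_P$ the subword consisting of the $\xi^{m-1}$-colored pinnacles together with all the non-pinnacle values $\xi^{m-1}(v_j)$, recognizing this subword as the canonical witness for $\pi_{m-1}(P)$ in the smaller ambient set; in the reverse direction it starts from the canonical witness of $\pi_{m-1}(P)$ and inserts the remaining pinnacles $\bigcup_{j\ne m-1}\pi_j(P)$ into its tail, color block by color block, to rebuild $\omega_P$. You instead proceed by induction on $m$, applying Theorem~\ref{thm: pinCharacter} once to strip off $\pi_0(P)$ and then invoking the $(m-1)$-color instance of the Corollary on $P'=\Psi_1^{-1}(P\setminus\pi_0(P))$; the composition law $\Psi_1\circ\Psi_{m-2}=\Psi_{m-1}$ together with the index bookkeeping $\#\pi_{m-2}(P')=c_{m-1}$ and $(n-c_0)-\sum_{j\le m-3}\#\pi_j(P')=n-\sum_{j\ne m-1}c_j$ then completes the identification. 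Your approach is slicker in that it never touches a witness permutation explicitly and makes the Corollary a purely formal consequence of the Theorem; the paper's constructive approach, on the other hand, makes transparent \emph{why} the color $m-1$ is the distinguished one---its pinnacles are the only ones that must genuinely compete with the non-pinnacle entries $\xi^{m-1}(v_j)$ for the order relation $\prec$, whereas pinnacles of any other color sit automatically above them. One small technicality: your base case $m=1$ uses $\Psi_0$ as the identity, which is the natural convention but lies strictly outside Definition~\ref{def:psi-map} (stated only for $k>0$); this is harmless but worth a word.
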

\begin{proof}
Suppose $P \in \APS(m,n)$. In light of Theorem \ref{thm: pinCharacter}, it suffices to prove (3) holds.   
So let $\omega_P$ as in Definition \ref{def:witness}. If $\pi_{m-1}(P)=\emptyset$ then (3) is trivially satisfied. So suppose not and let $d_{m-1}=\#\pi_{m-1}(P)$ so that $a_k=m-1$ for all $1\leq k\leq d_{m-1}$. Then the subword of $\omega_P$ obtained by omitting all values $\xi^{a_i}(p_i)$ for $i>d_{m-1}$ given by
\begin{equation*} 
\begin{matrix}&\xi^{m-1}(p_1)&&\xi^{m-1}(p_{d_{m-1}})\\
\xi^{m-1}(v_1)&& \dots \xi^{m-1}(v_{d_{m-1}})&& \xi^{m-1}(v_{d_{m-1}+1})\dots \xi^{m-1}(v_{n-d})
\end{matrix} 
\end{equation*}
is precisely the canonical witness $\omega_{\pi_{m-1}(P)}$ for $\pi_{m-1}(P)$ in  $\APS_d\Big(\xi^{m-1}[n] \setminus \bigcup_{j\neq i}\xi^{m-1}|\pi_j(P)|\Big)$. From here (3) follows since by Corollary \ref{cor: APSimage} and Equation \eqref{eq:APS=set} we have that,
\begin{align*}
\APS_{d_{m-1}}\left(\xi^{m-1}[n] \setminus \bigcup_{j\neq m-1}\xi^{m-1}|\pi_j(P)|\right) &= \Psi_{m-1}\left(\APS_{d_{m-1}}\left([n] \setminus \bigcup_{j\neq m-1}|\pi_j(P)|\right)\right)\\
&=\Psi_{m-1}\left(\APS_{d_{m-1}}\left(1, n- \sum_{j\neq m-1}\# \pi_j(P)\right)\right).
\end{align*}
For the reverse direction suppose (1)-(3) holds. By (3) we have that the canonical witness of $\pi_{m-1}(P)$ is given by Equation \eqref{eq:smallwitness}. So then, suppose for each $j \neq m-1$ we have $\pi_k(P) = \{\xi^k(p_{k,1})\prec \dots \xi^k(p_{k,d_k})\}$ with $d_k = \# \pi_k(P)$. Since by (1), $2d = \sum_i 2d_i \leq n-1$ then we can insert the remaining values $\bigcup_{j\neq m-1} \pi_j(P)$ into $\omega_{\pi_{m-1}(P)}$ after $\xi^{m-1}(v_{d_{m-1}+1})$ and obtain the canonical witness permutation $\omega_{P}$. Specifically, the values in the set $\pi_k(P)$ can be inserted between every $\xi^{m-1}(v_\ell)$ and $\xi^{m-1}(v_{\ell+1})$ for $\ell \in \{ 1+\sum_{j=k}^{m-1} d_j, \dots, 1+\sum_{j=k+1}^{m-1} d_j\}$ as follows,
\[
\begin{matrix}&\xi^{k}(p_{k,1})&&&\xi^{k}(p_{k,d_{k}})
\\
\dots \xi^{m-1}(v_{1+\sum_{j=k}^{m-1} d_j})&& \dots & \xi^{m-1}(v_{\sum_{j=k+1}^{m-1} d_j})&& \xi^{m-1}(v_{1+\sum_{j=k+1}^{m-1} d_j})\dots 
\end{matrix}
\]
Since by (2) the elements $p_{i,j}$ are all distinct, then the construction above yields a well defined permutation (in fact $\omega_P$) in $\Z_m \wr S_n$ with pinnacle set $P$. Thus, $P \in \APS_d(m,n)$.

\end{proof}

\subsection{Enumerating Pinnacle Sets for Generalized Symmetric Groups}\label{subsec:enumeration}

Given the complete characterization of the admissible pinnacle sets for $\Z_m \wr S_n$ above, we are now ready to compute the cardinalities of these sets. In this section we give four distinct formulas for these values, two recursive and two closed. To that effect we define the following values.

\begin{definition}
For any integers $m,n>0$ and $0 \leq d \leq \floor{\frac{n-1}{2}}$, define the function \[\p_{m,n}(d):=\#\APS_d(m,n).\] 
\end{definition}

The first recursion we provide is a generalization of \cite[Proposition 3.11]{PinSignedPermutations} (where $m=1$) which counts the number of admissible pinnacle sets in $\Z_m\wr S_n$ in terms of those in $\Z_{m-1} \wr S_i$ as $i$ ranges over $\{n-d,\dots,n\}$.

\begin{theorem}\label{thm:recursion}
For any $m,n\geq 1$ and $0 \leq d \leq \floor{\frac{n-1}{2}}$,
the function $\p_{m,n}(d)$ satisfies the recursion
\[
\p_{m, n}(d) =\sum_{i=0}^d\binom{n}{i}\p_{m-1, n-i}(d-i)
\]
where 
$\p_{1, n}(d)=\binom{n-1}{d}$
and $\p_{m,n}(0)=1$.
\end{theorem}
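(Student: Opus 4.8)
The plan is to prove the recursion directly from the characterization in Theorem~\ref{thm: pinCharacter}, by partitioning the admissible pinnacle sets of $\Z_m \wr S_n$ according to the size of their $\pi_0$-part. First I would fix $m,n \geq 1$ and $0 \leq d \leq \floor{\frac{n-1}{2}}$, and for each $0 \leq i \leq d$ consider the subcollection of those $P \in \APS_d(m,n)$ with $\#\pi_0(P) = i$. Since the sets $\pi_0(P)$ and $P \setminus \pi_0(P)$ together determine $P$, and $\APS_d(m,n) = \bigsqcup_{i=0}^{d} \{P : \#\pi_0(P)=i\}$, it suffices to show that the number of $P$ with $\#\pi_0(P)=i$ equals $\binom{n}{i}\p_{m-1,n-i}(d-i)$.

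The key step is to set up a bijection between $\{P \in \APS_d(m,n) : \#\pi_0(P)=i\}$ and the set of pairs $(A, Q)$ where $A \subseteq [n]$ with $\#A = i$ and $Q \in \Psi_1(\APS_{d-i}(m-1,n-i))$ on the ground set $\bigcup_{j=1}^{m-1}\xi^j([n]\setminus A)$. Given $P$, one takes $A = |\pi_0(P)|$ (which has size $i$, giving the factor $\binom{n}{i}$) and $Q = P \setminus \pi_0(P)$; by Theorem~\ref{thm: pinCharacter}(3) this $Q$ lies in $\Psi_1(\APS_{d-i}(m-1, n-i))$, and by Lemma~\ref{lem: imageSize} and Theorem~\ref{thm: witnessBiject} the number of such $Q$ is exactly $\#\APS_{d-i}(m-1,n-i) = \p_{m-1,n-i}(d-i)$, independent of which $i$-subset $A$ is chosen. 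Conversely, given any $i$-subset $A$ and any valid $Q$ on $\bigcup_{j=1}^{m-1}\xi^j([n]\setminus A)$, one recovers $P = Q \sqcup \{\xi^0(a) : a \in A\}$; the reverse direction of Theorem~\ref{thm: pinCharacter} (the explicit insertion of $\pi_0$-values into the tail of the canonical witness, as carried out in its proof) guarantees $P \in \APS_d(m,n)$, and condition~(2) is automatic since $A$ is disjoint from $|Q|$ by construction. This establishes the count $\#\{P : \#\pi_0(P)=i\} = \binom{n}{i}\p_{m-1,n-i}(d-i)$, and summing over $i$ gives the recursion.

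For the base cases: $\p_{m,n}(0) = 1$ holds because the only admissible set of cardinality $0$ is $\emptyset$ (it is admissible via $\Pin(\mathrm{id}) = \emptyset$ as in Example~\ref{ex:trivial}), and $\p_{1,n}(d) = \binom{n-1}{d}$ is the type $A$ count: by the convention \eqref{eq:APS=set}, $\APS_d(1,n) = \APS_d([n])$, and this is the content of \cite[Lemma 2.1 / Theorem]{PinnaclesTypeA} that the number of admissible pinnacle sets of $S_n$ of size at most $d$ is $\binom{n-1}{d}$ — alternatively one can cite that this follows from the recursion itself together with Corollary~\ref{cor:pinformula}, but it is cleanest to invoke the type $A$ result directly.

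The main obstacle I anticipate is being careful about the bookkeeping when passing between ``pinnacle sets on the abstract ground set $\bigcup_{j=1}^{m-1}\xi^j([n]\setminus A)$'' and ``elements of $\APS_{d-i}(m-1, n-i)$'': one must invoke Equation~\eqref{eq:APS=set} (admissibility depends only on the cardinality of the totally ordered ground set, here $\#([n]\setminus A) = n-i$) together with Corollary~\ref{cor: APSimage}, which identifies $\Psi_1(\APS_{d-i}(m-1,n-i))$ with exactly the admissible pinnacle sets supported away from $\xi^0[\,\cdot\,]$. The subtlety is that the ground set $[n]\setminus A$ is not literally $[n-i]$, only order-isomorphic to it, so one should explicitly note that admissibility (and the counts $\p$) are preserved under order isomorphism before concluding the number of valid $Q$ is $\p_{m-1,n-i}(d-i)$ for every choice of $A$. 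Once that is handled, the remaining verifications — that the two maps $P \mapsto (A,Q)$ and $(A,Q) \mapsto P$ are mutually inverse, and that both land in the claimed sets — are routine consequences of Theorem~\ref{thm: pinCharacter} and the canonical witness construction.
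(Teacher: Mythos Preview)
Your proposal is correct and follows essentially the same approach as the paper: both partition $\APS_d(m,n)$ according to $i = \#\pi_0(P)$, invoke Theorem~\ref{thm: pinCharacter} to identify $P\setminus\pi_0(P)$ with an element of $\Psi_1(\APS_{d-i}(m-1,n-i))$, and count the $\binom{n}{i}$ choices for $|\pi_0(P)|$. The paper wraps this in an induction on $m$ that is not actually needed (the inductive hypothesis is never used beyond the definition of $\p_{m-1,\cdot}$), so your direct bijective formulation is in fact a slight streamlining; your explicit attention to the order-isomorphism between $[n]\setminus A$ and $[n-i]$ via \eqref{eq:APS=set} is also a point the paper leaves implicit.
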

\begin{proof}
We proceed by induction on $m$. If $m=1$, then by \cite[Proposition 3.11]{PinSignedPermutations} $\p_{1,n}(d) =\binom{n-1}{d}$ as claimed. 

So suppose the recursion holds for all $\p_{k,n}(d)$ with $k<m$. By Lemma \ref{lem: imageSize} we know that for all $\ell\geq 0$ 
\[ \# \Psi_1(\APS_\ell(m-1,n))=\p_{m-1,n}(\ell).\]
Moreover, by Corollary \ref{cor: APSFiltrationM} we know that $\Psi_1(\APS_d(m-1,n)) \subset \APS_d(m,n)$ so 
\[\#(\APS_d(m,n)\setminus \Psi_1(\APS_d(m-1,n))) = \p_{m,n}(d) - \p_{m-1,n}(d).\]
Similarly, we also know that by Corollary \ref{cor: APSimage} the set $\Psi_1(\APS_d(m-1,n))$ consists of all subsets $S \in APS(m,n)$ with $\pi_0(S) = \emptyset$. Thus, a set $P$ is contained in $\APS_d(m,n)\setminus \Psi_1(\APS_d(m-1,n))$ if and only if $\pi_0(P) \neq \emptyset$. So let $\#\pi_0(P)=i$ and recall that by Theorem \ref{thm: pinCharacter} we have that 
\[P\setminus \pi_0(P) \in \Psi_1\left(\APS_{d-i}(m-1,n-i)\right).\]
Then, by the induction hypothesis and Lemma \ref{lem: imageSize}, we obtain that
\[
\# \Psi_1(\APS_{d-i}(m-1,n-i)) = \p_{m-1,n-i}(d-i).
\]
Since for each for some $1\leq i \leq d$ there are exactly $\binom{n}{i}$ ways to choose the pinnacles in $\pi_0(P)$, then there are precisely
\[
 \sum_{i=0}^d\binom{n}{i}\p_{m-1, n-i}(d-i)
\]
subsets $P$ in $\APS_d(m,n)\setminus \Psi_1(\APS_d(m-1,n))$, from which the result follows.
 \end{proof}

\begin{table}[ht]
    \centering
    \begin{tabular}{|c||*{13}{c|}c||}
        \hline
        m, n & 3 & 4 & 5 & 6 & 7 & 8 & 9 & 10 & 11 & 12\\
        \hline\hline
        $\#\APS(1, n)$ & 2 & 3 & 6 & 10 & 20 & 35 & 70 & 126 & 252 & 462\\
        \hline
        $\#\APS(2, n)$ & 5 & 7 & 31 & 49 & 209 & 351 & 1471 & 2561 & 10625 & 18943\\
        \hline 
        $\#\APS(3, n)$ & 8 & 11 & 76 & 118 & 776 & 1283 & 8236 & 14146 & 89528 & 157742\\
        \hline
        $\#\APS(4, n)$ & 11 & 15 & 141 & 217 & 1931 & 3167 & 27421 & 46761 & 398331 & 697359\\ 
        \hline 
        $\#\APS(5, n)$ & 14 & 19 & 226 & 346 & 3884 & 6339 & 69106 & 117326 & 1256804 & 2191534 \\
        \hline
        $\#\APS(6, n)$ & 17 & 23 & 331 & 505 & 6845 & 11135 & 146395 & 247801 & 3198557 & 5562287 \\ 
        \hline
        $\#\APS(7, n)$ & 20 & 27 & 456 & 694 & 11024 & 17891 & 275416 & 465186 & 7026480 & 12194958\\ 
        \hline
        $\#\APS(8, n)$ & 32 & 31 & 601 & 913 & 16631 & 26943 & 475321 & 801521 & 13868183 & 24033247 \\ 
        \hline 
        $\#\APS(9, n)$ & 26 & 35 & 766 & 1162 & 23876 & 38627 & 768286 & 1293886 & 25231436 & 43674254\\ 
        \hline  
        $\#\APS(10, n)$ & 29 & 39 & 951 & 1441 & 32969 & 53279 & 1179511 & 1984401 & 43059609 & 74463519 \\
        \hline
    \end{tabular} 
    \caption{Cardinalities of $\APS(m, n)$ for all $1 \leq m \leq 10$, and $3 \leq n \leq 12$.}
\end{table}

It turns out that the cardinality of $\APS_d(m,n)$ can also be expressed beautifully as the first $d+1$ terms of $(-1)^d(m-1)^n$. This gives us the first closed formula for $\p_{m,n}(d)$.

\begin{theorem}\label{thm:pinn-formula}
    For any integers $m,n>0$ and $0 \leq d \leq \floor{\frac{n-1}{2}}$, we have that 
\[
\p_{m,n}(d)= \sum_{i=0}^{d} \binom{n}{i}m^i(-1)^{i+d}.
\]
\end{theorem}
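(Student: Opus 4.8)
The plan is to prove the formula by induction on $m$, feeding the recursion of Theorem~\ref{thm:recursion} into the inductive hypothesis and then collapsing the resulting double sum with the binomial theorem.

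\textbf{Base case.} For $m=1$ the claim reads $\binom{n-1}{d}=\sum_{i=0}^d\binom{n}{i}(-1)^{i+d}$, which is the classical partial alternating binomial identity $\sum_{i=0}^d(-1)^i\binom{n}{i}=(-1)^d\binom{n-1}{d}$ (a one-line induction on $d$ using Pascal's rule) multiplied through by $(-1)^d$. So the base case is immediate from $\p_{1,n}(d)=\binom{n-1}{d}$.

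\textbf{Inductive step.} Assume the formula holds for $m-1$ and all valid parameters. First I would check that the hypothesis legitimately applies to each summand: for $0\le i\le d$ one has $0\le d-i$, and since $d\le\floor{\frac{n-1}{2}}$ we get $d-i\le\frac{n-1}{2}-i\le\frac{n-i-1}{2}$, hence $d-i\le\floor{\frac{n-i-1}{2}}$; moreover $n-i\ge d+1\ge 1$. Now substitute Theorem~\ref{thm:recursion} and the inductive formula to obtain
\[
\p_{m,n}(d)=\sum_{i=0}^d\binom{n}{i}\sum_{j=0}^{d-i}\binom{n-i}{j}(m-1)^j(-1)^{j+d-i}.
\]
Next I would use the identity $\binom{n}{i}\binom{n-i}{j}=\binom{n}{i+j}\binom{i+j}{i}$, reindex by $k=i+j$ (so $i$ runs from $0$ to $k$ and $k$ from $0$ to $d$), and note the parity cancellation $(-1)^{j+d-i}=(-1)^{k+d-2i}=(-1)^{k+d}$. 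This rewrites the sum as
\[
\p_{m,n}(d)=\sum_{k=0}^d\binom{n}{k}(-1)^{k+d}\sum_{i=0}^k\binom{k}{i}(m-1)^{k-i}=\sum_{k=0}^d\binom{n}{k}(-1)^{k+d}m^k,
\]
where the inner sum is evaluated by the binomial theorem as $((m-1)+1)^k=m^k$. This is precisely the claimed closed form.

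I do not anticipate a genuine obstacle here; the only points requiring care are the reindexing of the triangular summation region $\{i,j\ge 0,\ i+j\le d\}$ into $\{0\le i\le k\le d\}$, the parity cancellation in the sign exponent, and confirming the inductive hypothesis is available for the shifted triple $(m-1,\,n-i,\,d-i)$ — all of which are routine once spelled out.
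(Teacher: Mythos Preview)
Your proof is correct, and it is substantially cleaner than the paper's own argument. Both proofs proceed by induction on $m$ with the same base case and both feed Theorem~\ref{thm:recursion} into the inductive hypothesis to obtain the double sum $\sum_{i}\binom{n}{i}\sum_{j}\binom{n-i}{j}(m-1)^j(-1)^{j+d-i}$. From there the two diverge. The paper expands $(m-1)^j$ as $\sum_a\binom{j}{a}m^a(-1)^{j-a}$, producing a triple sum in which the coefficient of each $m^a$ must be isolated and simplified through a chain of binomial manipulations, culminating in an appeal to the identity $\sum_j\frac{(-1)^j}{x+j}\binom{d}{j}=x^{-1}\binom{d+x}{d}^{-1}$ from \cite{ConcreteMathematics}. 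Your approach instead leaves $(m-1)^j$ intact, applies the trinomial revision $\binom{n}{i}\binom{n-i}{j}=\binom{n}{i+j}\binom{i+j}{i}$, reindexes via $k=i+j$, observes the sign collapse $(-1)^{j+d-i}=(-1)^{k+d}$, and evaluates the resulting inner sum $\sum_{i=0}^k\binom{k}{i}(m-1)^{k-i}=m^k$ by the binomial theorem in one stroke. The gain is considerable: your argument replaces roughly a page of calculation with three lines and avoids any external identity beyond Pascal and the binomial theorem.
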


\begin{proof}
    Evidently $ \sum_{i=0}^{d} \binom{n}{i}(-1)^{i+d} = \binom{n-1}{d}=\p_{1,n}(d)$. We proceed by induction on $m$. 

    Suppose that $\p_{m-1,\ell}(d)= \sum_{i=0}^{d} \binom{\ell}{i}(m-1)^i(-1)^{i+d}$ for all positive integers $\ell$ and $d\leq \floor{(\ell-1)/2}$. Then, 
\begin{align}
    \p_{m, n}(d) &=\sum_{k=0}^d\binom{n}{k}\p_{m-1, n-k}(d-k) = \sum_{k=0}^d\binom{n}{k}\sum_{i=0}^{d-k} \binom{n-k}{i}(m-1)^i(-1)^{i+d-k} \nonumber \\
    &=  \sum_{k=0}^d\binom{n}{k}\sum_{i=0}^{d-k} \binom{n-k}{i}(-1)^{i+d-k} \sum_{a =0}^i \binom{i}{a}m^{a}(-1)^{i-a} \nonumber \\
    &= \sum_{a=0}^d m^a \sum_{j=0}^{d-a}\sum_{k=0}^{d-(j+a)} \binom{n}{k}\binom{n-k}{j+a}\binom{j+a}{a}. \label{eq:B}
    \end{align}
    Having factored out $m$, we see that the coefficient of $m^a$ in \eqref{eq:B} is given by,
\begin{align}
    \sum_{j=0}^{d-a}\sum_{k=0}^{d-(j+a)} \binom{n}{k}\binom{n-k}{j+a}\binom{j+a}{a} = \sum_{j=0}^{d-a} \binom{n}{j+a}\binom{j+a}{a} \sum_{k=0}^{d-(j+a)} \binom{n-(j+a)}{j}(-1)^{d+k+a}\nonumber\\
    = \sum_{j=0}^{d-a} \binom{n}{j+a}\binom{j+a}{a}\binom{n-(j+a)-1}{d-(j+a)}(-1)^j = \binom{n}{a} \sum_{j=0}^{d-a} (-1)^j \binom{n-a}{j}\binom{n-(j+a)-1}{d-(j+a)}.\label{eq:D}
\end{align}
So now, since $\binom{n}{j}\binom{n-1-j}{n-1-d}=\frac{n-d}{n-j} \binom{n}{d}\binom{d}{j}$ it follows that, 
\begin{align}\label{eq:A}
    \sum_{j=0}^{d-a} (-1)^j \binom{n-a}{j}\binom{n-(j+a)-1}{d-(j+a)} =(n-d) \binom{n-a}{d-a} \sum_{j=0}^{d-a} \frac{(-1)^j }{n-a-j}\binom{d-a}{j}.
\end{align}
Moreover, since for any $1\leq x \leq d$ we have $\sum_{j=0}^d \frac{(-1)^j}{x+j}\binom{d}{j} = x^{-1}\binom{d+x}{d}^{-1}$ (see \cite[Eq. (5.41)]{ConcreteMathematics}), then sending $j\mapsto d-a-j$ it follows that the sum in \eqref{eq:A} is equal to,
\begin{align}
\sum_{j=0}^{d-a} \frac{(-1)^j }{n-a-j}\binom{d-a}{d-a-j} = 
\sum_{j=0}^{d-a} 
\frac{(-1)^{d-a-j}}{n-d+j}\binom{d-a}{j}
= 
(n-d)^{-1}\binom{n-a}{d-a}^{-1}(-1)^{d-a}\label{eq:C}
\end{align}
Thus, combining Equations \eqref{eq:B}, \eqref{eq:D},\eqref{eq:A}, \eqref{eq:C} we obtain the desired result, 
\begin{align*}
    \p_{m,n}(d)&=\sum_{a=0}^d m^a \sum_{j=0}^{d-a}\sum_{k=0}^{d-(j+a)} \binom{n}{k}\binom{n-k}{j+a}\binom{j+a}{a}\\
    &=\sum_{a=0}^d m^a \binom{n}{a} \sum_{j=0}^{d-a} (-1)^j \binom{n-a}{j}\binom{n-(j+a)-1}{d-(j+a)}\\
    &=\sum_{a=0}^d m^a \binom{n}{a} (-1)^{d+a}.
\end{align*}
\end{proof}

The formula in Theorem \ref{thm:pinn-formula} can also be expressed as a purely positive sum without signs. The following result provides the second closed form for $\p_{m,n}(d)$.

\begin{corollary}\label{cor:pinformula}
For any integers $m,n>0$ and $0 \leq d \leq \floor{\frac{n-1}{2}}$ we have,
\[
\p_{m,n}(d)= \sum_{k=0}^d(m-1)^k \binom{n}{k}\binom{n-k-1}{d-k}.
    \]
\end{corollary}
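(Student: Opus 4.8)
The plan is to derive the positive-sum formula in Corollary~\ref{cor:pinformula} directly from the signed formula in Theorem~\ref{thm:pinn-formula} by a binomial-coefficient manipulation. Starting from $\p_{m,n}(d)= \sum_{i=0}^{d} \binom{n}{i}m^i(-1)^{i+d}$, I would write $m^i = ((m-1)+1)^i = \sum_{k=0}^{i}\binom{i}{k}(m-1)^k$, substitute this in, and then interchange the order of summation so that $(m-1)^k$ is summed with a fixed outer index $k$. After the swap the coefficient of $(m-1)^k$ becomes $\sum_{i=k}^{d}\binom{n}{i}\binom{i}{k}(-1)^{i+d}$. Using the subset-of-a-subset identity $\binom{n}{i}\binom{i}{k}=\binom{n}{k}\binom{n-k}{i-k}$, this factors as $\binom{n}{k}\sum_{i=k}^{d}\binom{n-k}{i-k}(-1)^{i+d}$, and reindexing $j=i-k$ turns the inner sum into $\sum_{j=0}^{d-k}\binom{n-k}{j}(-1)^{j+d-k}$.

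The key remaining step is to recognize that $\sum_{j=0}^{d-k}\binom{n-k}{j}(-1)^{j+d-k}$ is exactly the type $A$ count $\p_{1,n-k}(d-k)=\binom{n-k-1}{d-k}$, which is the base case already established in Theorem~\ref{thm:recursion} (equivalently the identity $\sum_{i=0}^{d}\binom{N}{i}(-1)^{i+d}=\binom{N-1}{d}$ that is invoked at the start of the proof of Theorem~\ref{thm:pinn-formula}). Applying this with $N=n-k$ gives $\binom{n-k-1}{d-k}$ as the inner sum, so the coefficient of $(m-1)^k$ is $\binom{n}{k}\binom{n-k-1}{d-k}$, and summing over $k$ from $0$ to $d$ yields precisely $\sum_{k=0}^{d}(m-1)^k\binom{n}{k}\binom{n-k-1}{d-k}$, as claimed.

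I would present this as a short chain of displayed equalities, being careful to keep the range of each summation variable correct (in particular that $i$ ranges only up to $d$, so after the swap $k$ ranges up to $d$ and $j=i-k$ up to $d-k$), and to note that terms with $i<k$ vanish so extending or restricting the inner range is harmless. The only genuine content is the partial-sum-of-alternating-binomials identity, which is already used earlier in the paper, so no new lemma is needed. The main obstacle — really just a bookkeeping matter — is managing the triple index carefully through the interchange of summation and making sure the hockey-stick-type collapse $\sum_{j=0}^{d-k}\binom{n-k}{j}(-1)^{j+d-k}=\binom{n-k-1}{d-k}$ is cited rather than re-derived. An alternative, slightly longer route would be to reprove Corollary~\ref{cor:pinformula} directly by induction on $m$ using the recursion in Theorem~\ref{thm:recursion} together with a Vandermonde-type identity, but deducing it from the already-proved closed formula is cleaner and I would take that path.
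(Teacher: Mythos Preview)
Your proposal is correct and follows essentially the same route as the paper's own proof: both start from Theorem~\ref{thm:pinn-formula}, expand $m^i=((m-1)+1)^i$, swap the sums, apply $\binom{n}{i}\binom{i}{k}=\binom{n}{k}\binom{n-k}{i-k}$, reindex, and then collapse the inner alternating sum via $\sum_{j=0}^{d-k}(-1)^{j+d-k}\binom{n-k}{j}=\binom{n-k-1}{d-k}$. There is no substantive difference in method or in the identities invoked.
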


\begin{proof}
    The proof follows from direct computation. From Theorem \ref{thm:pinn-formula} we obtain:
\begin{align*}
 \p_{m,n}(d)
&= \sum_{i=0}^{d} (-1)^{i+d} \binom{n}{i}m^i
= \sum_{i=0}^{d} (-1)^{i+d} \binom{n}{i}\sum_{k=0}^i \binom{i}{k}(m-1)^k\\
&= \sum_{k=0}^d \sum_{i=k}^{d} (-1)^{i+d} \binom{n}{i} \binom{i}{k} (m-1)^k
= \sum_{k=0}^d \sum_{i=k}^{d} (-1)^{i+d} \binom{n}{k} \binom{n-k}{i-k}(m-1)^k\\
&= \sum_{k=0}^d  (m-1)^k\binom{n}{k} (-1)^{d+k} \sum_{i=0}^{d-k} (-1)^{i} \binom{n-k}{i}
= \sum_{k=0}^d  (m-1)^k\binom{n}{k}  \binom{n-1-k}{d-k}.
\end{align*}
\end{proof}

In \cite{PinSignedPermutations} the set of admissible pinnacle sets for the usual symmetric group $S_n$ and the signed symmetric group $\Z_2 \wr S_n$ were enumerated. Indeed, in \cite[Proposition 3.11]{PinSignedPermutations} and \cite[Theorem 3.12]{PinSignedPermutations} it was shown that
\[
\#\APS(1,n) = \binom{n-1}{\floor{\frac{n}{2}}}
\qquad \text{and} \qquad
\#\APS(2,n) = \sum_{k=0}^{\floor{\frac{n}{2}}}{\binom{n}{k}\binom{n-1-k}{\floor{\frac{n-1}{2}}-k}}.
\]
 Thus, our results also easily recover and fully generalize the results in \cite{PinSignedPermutations}.

\begin{corollary}\label{cor:APS}
    The number of admissible pinnacle sets in $\Z_m \wr S_n$ for any $n\geq 2$ and $m\geq 1$ is given by:
    \[
\#\APS(m,n)= (-1)^{\floor{\frac{n-1}{2}}}\sum_{i=0}^{\floor{\frac{n-1}{2}}} \binom{n}{i}m^i(-1)^{i}= \sum_{k=0}^{\floor{\frac{n-1}{2}}}(m-1)^k \binom{n}{k}\binom{n-k-1}{\floor{\frac{n-1}{2}}-k}.
    \]
In particular, $\#\APS(m,n)$ grows exponentially with $n$ but polynomially with $m$.
\end{corollary}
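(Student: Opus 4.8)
The plan is to deduce Corollary \ref{cor:APS} directly from Theorem \ref{thm:pinn-formula} and Corollary \ref{cor:pinformula} by specializing to the maximal cardinality $d=\floor{\frac{n-1}{2}}$. Recall from the filtration \eqref{eq:APSfiltration2} that $\APS_{\floor{\frac{n-1}{2}}}(m,n)=\APS(m,n)$, so $\#\APS(m,n)=\p_{m,n}\!\left(\floor{\frac{n-1}{2}}\right)$. Plugging $d=\floor{\frac{n-1}{2}}$ into the formula of Theorem \ref{thm:pinn-formula} gives
\[
\#\APS(m,n)=\sum_{i=0}^{\floor{\frac{n-1}{2}}}\binom{n}{i}m^i(-1)^{i+\floor{\frac{n-1}{2}}}=(-1)^{\floor{\frac{n-1}{2}}}\sum_{i=0}^{\floor{\frac{n-1}{2}}}\binom{n}{i}m^i(-1)^i,
\]
which is the first claimed expression. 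Plugging the same value of $d$ into Corollary \ref{cor:pinformula} gives
\[
\#\APS(m,n)=\sum_{k=0}^{\floor{\frac{n-1}{2}}}(m-1)^k\binom{n}{k}\binom{n-k-1}{\floor{\frac{n-1}{2}}-k},
\]
which is the second expression. This handles both equalities with essentially no work beyond the substitution; I should note that the hypothesis $d\le\floor{\frac{n-1}{2}}$ of those results is met with equality here, and that $n\geq 2$ guarantees $\floor{\frac{n-1}{2}}\geq 0$ so the formulas are nonvacuous.

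The remaining assertion is the growth statement: $\#\APS(m,n)$ grows exponentially in $n$ but polynomially in $m$. For the polynomial-in-$m$ claim, observe from the closed form $\sum_{i=0}^{\floor{\frac{n-1}{2}}}\binom{n}{i}m^i(-1)^{i+\floor{\frac{n-1}{2}}}$ that, for fixed $n$, $\#\APS(m,n)$ is a polynomial in $m$ of degree exactly $\floor{\frac{n-1}{2}}$ (the leading coefficient $\binom{n}{\floor{\frac{n-1}{2}}}$ is nonzero), hence it grows like $\Theta(m^{\floor{\frac{n-1}{2}}})$. For the exponential-in-$n$ claim, fix $m\geq 2$ and use the strictly positive form from Corollary \ref{cor:pinformula}: every summand is nonnegative, so $\#\APS(m,n)$ is bounded below by any single term, e.g. the $k=0$ term $\binom{n-1}{\floor{\frac{n-1}{2}}}$, which is the central binomial coefficient and grows like $\Theta(2^n/\sqrt{n})$; an upper bound of $(m-1)^n\cdot\text{(polynomial)}$ or simply $\#\APS(m,n)\le \#\APS(m,n)\le$ the full binomial-type sum $\le m^n$ shows the growth is at most exponential. (For $m=1$ one has $\#\APS(1,n)=\binom{n-1}{\floor{n/2}}$ directly, still $\Theta(2^n/\sqrt n)$.) I expect no real obstacle here; the only mild care needed is to state precisely what ``grows exponentially/polynomially'' means — I will interpret it as: for fixed $m\ge 2$, $\#\APS(m,n)=2^{\Theta(n)}$, and for fixed $n$, $\#\APS(m,n)=\Theta_n(m^{\floor{(n-1)/2}})$ — and to invoke the positive formula of Corollary \ref{cor:pinformula} rather than the alternating one so that term-by-term bounds are immediate.

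Thus the proof is almost entirely a two-line substitution followed by elementary estimates on binomial coefficients; the main (very minor) point to get right is aligning the sign $(-1)^{i+d}$ with the pulled-out global sign $(-1)^{\floor{\frac{n-1}{2}}}$ and making the growth claims quantitatively precise using the nonnegative summation in Corollary \ref{cor:pinformula}.
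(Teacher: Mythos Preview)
Your proposal is correct and matches the paper's intent: the corollary is stated without proof there, as it is meant to follow immediately by substituting $d=\floor{\frac{n-1}{2}}$ into Theorem~\ref{thm:pinn-formula} and Corollary~\ref{cor:pinformula} together with the filtration~\eqref{eq:APSfiltration2}. Your additional work making the growth claims precise (polynomial of degree $\floor{(n-1)/2}$ in $m$, and $2^{\Theta(n)}$ via the central binomial coefficient from the $k=0$ term of the positive formula) goes beyond what the paper bothers to justify, but is correct and appropriate.
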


In \cite[Prop. 3.11]{PinSignedPermutations} it was shown that
$\p_{1, n}(d)=\p_{1,n-1}(d-1) + \p_{1,n-1}(d)=\binom{n-1}{d}$. 
A similar recursion for the case when $m=2$ was also conjectured in \cite[Conjecture 5.1]{PinSignedPermutations}. 
As an easy consequence of Theorem \ref{thm:pinn-formula} we prove and generalize these recursions for all relevant values of $m,n,d$. In particular, the recursion below differs from that in Theorem \ref{thm:recursion} in that it expresses the number of admissible pinnacle sets in $\Z_m \wr S_n$ in terms of those in $\Z_m\wr S_{n-1}$ as opposed to those in $\Z_{m-1} \wr S_i$ for $n-d\leq i \leq n$.

\begin{theorem}\label{thm:recursion2}
    For any positive integers $m,n$ and $d\leq \floor{\frac{n-1}{2}}$ we have:
    \begin{equation}\label{eq:recursion2}
\p_{m,n}(d) = m\;\p_{m,n-1}(d-1)+\p_{m,n-1}(d). \end{equation}
Thus, at $m=2$ \eqref{eq:recursion2} recovers the sequence \cite[A119258]{OEIS}, so that Conjecture 5.1 in \cite{PinSignedPermutations} is true.
\end{theorem}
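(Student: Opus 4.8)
The plan is to derive the recursion \eqref{eq:recursion2} purely algebraically from the closed formula in Theorem \ref{thm:pinn-formula}, rather than by a combinatorial bijection. Writing $\p_{m,n}(d) = \sum_{i=0}^{d}\binom{n}{i}m^i(-1)^{i+d}$, I would compute the right-hand side $m\,\p_{m,n-1}(d-1) + \p_{m,n-1}(d)$ by substituting this formula with $n$ replaced by $n-1$ and reindexing, then collect the coefficient of each $m^i$ and check it equals $\binom{n}{i}(-1)^{i+d}$.

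\emph{Key steps.} First I would expand
\[
m\,\p_{m,n-1}(d-1) = \sum_{i=0}^{d-1}\binom{n-1}{i}m^{i+1}(-1)^{i+d-1} = \sum_{j=1}^{d}\binom{n-1}{j-1}m^{j}(-1)^{j+d},
\]
after the shift $j=i+1$ (the sign $(-1)^{i+d-1}=(-1)^{j+d-2}=(-1)^{j+d}$). Second, write $\p_{m,n-1}(d) = \sum_{j=0}^{d}\binom{n-1}{j}m^{j}(-1)^{j+d}$. Third, add the two sums termwise: the coefficient of $m^j(-1)^{j+d}$ is $\binom{n-1}{j-1}+\binom{n-1}{j}$ for $1\le j\le d$ (with the $j=0$ term contributing only $\binom{n-1}{0}=1=\binom{n}{0}$), which by Pascal's rule equals $\binom{n}{j}$. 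Hence the total is $\sum_{j=0}^{d}\binom{n}{j}m^{j}(-1)^{j+d} = \p_{m,n}(d)$, as claimed. One should note that Theorem \ref{thm:pinn-formula} is valid for the arguments appearing here: $\p_{m,n-1}(d)$ needs $d\le\floor{(n-2)/2}$ and $\p_{m,n-1}(d-1)$ needs $d-1\le\floor{(n-2)/2}$; when $d=\floor{(n-1)/2}$ the first may fail, so I would handle that boundary case separately, observing that $\p_{m,n-1}(d)=0$ there (there are no admissible sets of size $d>\floor{(n-2)/2}$ in $\Z_m\wr S_{n-1}$ by Lemma \ref{lem:maxLength}) and that the identity $\binom{n-1}{d}+\binom{n-1}{d-1}=\binom{n}{d}$ still gives the formula with the convention $\binom{n-1}{d}=0$ when $2d=n-1$ forces $d > (n-2)/2$; alternatively just invoke that the algebraic identity among binomials holds with the standard convention regardless.

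\emph{Finally}, for the OEIS claim, I would specialize to $m=2$ and check that $\p_{2,n}(d)$ satisfies the recurrence $a(n,d)=2a(n-1,d-1)+a(n-1,d)$ with the appropriate initial conditions $\p_{2,n}(0)=1$ and $\p_{2,1}(0)=1$, which is exactly the defining recurrence of the triangle A119258; summing over $d$ then gives the sequence $\#\APS(2,n)=\sum_d \p_{2,n}(d)$ matching the values $5,7,31,49,\dots$ already exhibited in the table. This verifies \cite[Conjecture 5.1]{PinSignedPermutations}.

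\emph{Main obstacle.} There is essentially no deep obstacle here once Theorem \ref{thm:pinn-formula} is in hand — the only care needed is bookkeeping of the summation ranges and signs under the index shift, and the boundary value $d=\floor{(n-1)/2}$ where the smaller group's formula has to be interpreted with the zero convention. If one instead wanted a bijective proof, the hard part would be matching the characterization in Theorem \ref{thm: pinCharacter} across the change $n \rightsquigarrow n-1$ with a factor of $m$ keeping track of which color class absorbs the element $n$; but the algebraic route via the closed formula avoids this entirely.
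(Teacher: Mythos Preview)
Your approach is essentially identical to the paper's: both derive the recursion algebraically from the closed formula of Theorem~\ref{thm:pinn-formula} via Pascal's rule, with no combinatorial bijection involved. The paper computes $\p_{m,n}(d)-\p_{m,n-1}(d)$ and shows it equals $m\,\p_{m,n-1}(d-1)$; you compute $m\,\p_{m,n-1}(d-1)+\p_{m,n-1}(d)$ and show it equals $\p_{m,n}(d)$. These are the same manipulation.

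You are in fact more careful than the paper in flagging the boundary case $n$ odd, $d=(n-1)/2$, where $\p_{m,n-1}(d)$ falls outside the stated range of Theorem~\ref{thm:pinn-formula}; the paper simply applies the formula there without comment. However, your proposed patch is wrong: $\p_{m,n-1}(d)$ is \emph{not} zero, because $\APS_d$ counts admissible pinnacle sets of cardinality $\le d$, not $=d$; by Lemma~\ref{lem:maxLength} one has $\p_{m,n-1}(d)=\p_{m,n-1}(d-1)$ there, which is generally positive. Indeed, taking $m=2$, $n=3$, $d=1$ gives $\p_{2,3}(1)=5$ while $2\,\p_{2,2}(0)+\p_{2,2}(1)=2\cdot 1+1=3$, so the recursion, read literally as an identity among the combinatorial counts, fails at this edge. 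It \emph{does} hold if $\p_{m,n-1}(d)$ is interpreted as the formula value $\sum_{i=0}^d\binom{n-1}{i}m^i(-1)^{i+d}$, which is what both you and the paper are really proving, and which is what is needed to identify the array with A119258.
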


\begin{proof}
    By Theorem \ref{thm:pinn-formula} we have: 
    \begin{align*}
       \p_{m, n}(d) - \p_{m, n-1}(d) 
        &=  \sum_{i=0}^d (-1)^{i+d}m^i\left\lbrack \binom{n}{i} - \binom{n-1}{i} \right\rbrack  
        = \sum_{i=1}^d (-1)^{i+d}m^i\binom{n-1}{i-1}  \\
       & =m\sum^{d-1}_{i=0}(-1)^{i+d-1}m^i\binom{n-1}{i} =m\;\p_{m, n-1}(d-1).
    \end{align*}
    The remaining claims follow by setting $m=2$ and $d=\floor{\frac{n-1}{2}}$.
\end{proof}

\section{Pinnacles for Complex Reflection Groups}\label{sec:ComplexReflGroup}
In order to extend the results from generalized symmetric groups to complex reflection groups we will realize $G(m,p,n)$ as certain subgroups inside $\Z_m \wr S_n$. To that effect, we define the following. 

\begin{definition}
    Let $w=w(1)\dots w(n)$ be any element in $\Z_m \wr S_n$ so that for each $i$ we can write $w(i)=\xi^{e_i}(x_i)$ for some $e_i \in \{1,\dots,m-1\}$ and $x_i \in [n]$. Then, for each $w$ define $\varepsilon_w$ as the sum of the $\xi$-powers, $\varepsilon_w:= \sum_{i=1}^ne_i$. 
\end{definition}

Suppose $m,n,p>0$ such that $p$ divides $m$. The \newword{complex reflection group $G(m,p,n)$} is the normal subgroup of $\Z_m \wr S_n$ comprised of the generalized permutations $w$ for which $\varepsilon_w\equiv 0 \;(mod \;p)$.

In particular, when $p=1$ we have that $G(m,1,n)=\Z_m \wr S_n$. 
More generally, whenever $q | p| m$ we have the chain of inclusions:
\begin{equation}\label{eq:nestedgroups}
G(m,q,n) \subseteq G(m,p,n) \subseteq G(m,1,n).
\end{equation}
These inclusions imply that all the previous definitions from \S\ref{Sec:GeneralizedSymGroup} for generalized symmetric groups descend to $G(m,p,n)$. In particular, a subset $P \subseteq \I_n^m$ is an \newword{admissible pinnacle set for $G(m,p,n)$} if there exists a witness permutation $w \in G(m,p,n)$ such that $\Pin(w) = P$. 

\begin{remark}Throughout this section we always assume $m,n,p>0$ with $p$ dividing $m$.\end{remark}

\begin{definition}\label{def:APS-ComplexRflGrp}
For any integer $d\geq 0$, let $\APS_d(m,p,n)$ denote the collection of admissible pinnacle sets $P 
\subseteq \I_n^m$ for $G(m,p,n)$ such that $\#P \leq d$, and set $\APS(m,p,n) := \bigcup_d \APS_d(m,p,n)$.
\end{definition}

Notice that in light of \eqref{eq:nestedgroups}, if $q|p|m$ we have that: 
\begin{equation}\label{eq:APSinclusion}\APS_d(m,q,n)\subseteq \APS_d(m,p,n)
\subseteq \APS_d(m,1,n)=\APS_d(m,n).
\end{equation}
Similarly, it follows from Lemma \ref{lem:maxLength} that for $d>\floor{\frac{n-1}{2}}$, $\APS_d(m,p,n)\setminus \APS_{\floor{\frac{n-1}{2}}}(m,p,n) = \emptyset$, thus $\APS(m,p,n)=\APS_{\floor{\frac{n-1}{2}}}(m,p,n)$.

\begin{lemma}\label{lem:epsilon-range}
    Let $P \in \APS(m,n)$. For any witness $w \in \Z_m \wr S_n$ of $P$ we have that $\varepsilon_w \leq \varepsilon_{\omega_P}$. Moreover, if $\varepsilon_w < \varepsilon_{\omega_P}$ for some witness $w$, then for all $\varepsilon_w< \ell <\varepsilon_{\omega_P}$ there exists another witness $w'$ of $P$ such that $\varepsilon_{w'}=\ell$.
\end{lemma}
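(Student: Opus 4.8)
The first inequality $\varepsilon_w \leq \varepsilon_{\omega_P}$ should follow by a direct optimization argument: among all witnesses of $P$, the canonical witness $\omega_P$ uses the power $m-1$ on every non-pinnacle value and leaves the pinnacle powers $a_1,\dots,a_d$ forced by $P$ itself. I would argue that for an arbitrary witness $w$ of $P$, the multiset of $\xi$-powers appearing on the $d$ pinnacle slots is exactly $\{a_1,\dots,a_d\}$ (these are determined by $P$), so $\varepsilon_w$ and $\varepsilon_{\omega_P}$ differ only in the contribution of the $n-d$ non-pinnacle values; since each such value $\xi^{e}(x)$ has $e \leq m-1$, the sum of non-pinnacle powers in $w$ is at most $(n-d)(m-1)$, which is precisely what $\omega_P$ achieves. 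Hence $\varepsilon_w \leq \varepsilon_{\omega_P}$.

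For the interpolation claim, the plan is to show that one can decrease the excess $\varepsilon_{\omega_P} - \varepsilon_w$ by exactly $1$ at a time while staying a witness, or equivalently, starting from $w$ with $\varepsilon_w < \varepsilon_{\omega_P}$, produce $w''$ with $\varepsilon_{w''} = \varepsilon_w + 1$ still a witness of $P$. If $\varepsilon_w < \varepsilon_{\omega_P}$ then some non-pinnacle value of $w$, say $\xi^{e}(x)$ with $e < m-1$, can have its power raised: I would replace $w$'s assignment at that position by $\xi^{e+1}(x)$ and correspondingly relabel so the result is still a bijection of $\I_n^m$. The subtlety is that raising a power makes that value \emph{larger} in the order $\prec$ (since within a fixed norm, $\xi^{i}(x) \prec \xi^{i+1}(x)$), which could create a new pinnacle or destroy an existing one. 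To control this, I would not raise an arbitrary value but rather pass through the \emph{canonical} witness: by construction $\omega_P$ has all non-pinnacle values carrying power $m-1$, so the excess $\varepsilon_{\omega_P} - \varepsilon_w = \sum (\text{(}m-1\text{)} - e_i)$ over non-pinnacle slots, and I can peel off one unit by lowering the power on exactly one non-pinnacle value of $\omega_P$ from $m-1$ to $m-2$ while keeping the canonical skeleton. Since in $\omega_P$ every non-pinnacle value $v_i$ is compared only against pinnacle neighbors $\xi^{a_j}(p_j)$, and lowering $\xi^{m-1}(v_i)$ to $\xi^{m-2}(v_i)$ only makes it \emph{smaller}, the pinnacle inequalities $\xi^{m-2}(v_i) \prec \xi^{a_j}(p_j)$ are preserved and no new pinnacle is created; this yields a witness $w'$ with $\varepsilon_{w'} = \varepsilon_{\omega_P} - 1$. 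Iterating, one produces witnesses with every value of $\varepsilon$ between $\varepsilon_{\omega_P}$ and the minimum; combined with the existence of $w$ at the low end, every intermediate value $\ell$ with $\varepsilon_w < \ell < \varepsilon_{\omega_P}$ is realized.

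A cleaner way to package the second half, which I would likely use instead: show that the set of achievable values $\{\varepsilon_{w'} : w' \text{ a witness of } P\}$ contains, for each $0 \leq t \leq (n-d) - (\text{minimum non-pinnacle power sum forced by admissibility})$, a canonical-type witness obtained from $\omega_P$ by choosing a size-$t$ subset $T$ of non-pinnacle positions and lowering the power on those from $m-1$ to $m-2$. As long as $T$ is chosen among positions whose non-pinnacle value exceeds all pinnacle norms to its immediate left and right in the skeleton — which, by the tightness argument in the proof that $\omega_P$ is a witness, is automatic for sufficiently many positions — the resulting permutation is still a witness. This directly produces witnesses with $\varepsilon = \varepsilon_{\omega_P} - t$ for a contiguous range of $t$, giving the interpolation between any lower witness value and $\varepsilon_{\omega_P}$.

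\textbf{Main obstacle.} The delicate point is ensuring that when we perturb powers to adjust $\varepsilon$, we neither create a spurious new pinnacle nor lose a required one — i.e., that the perturbed permutation still has pinnacle set exactly $P$. Working with the canonical witness sidesteps most of this because there the comparisons are structurally simple (each non-pinnacle value sits between two pinnacles and lowering its power only decreases it), but one must still check the boundary/tail positions carefully and confirm that enough non-pinnacle positions are "safely lowerable" to cover the full range of $\varepsilon$ down to where $w$ lives. I expect verifying this range-covering step — that the minimum achievable $\varepsilon$ among witnesses equals the minimum achievable via these canonical perturbations — to be the part requiring the most care.
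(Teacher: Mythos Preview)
Your argument for the inequality $\varepsilon_w \leq \varepsilon_{\omega_P}$ is correct and matches the paper's: both observe that the pinnacle powers $a_1,\dots,a_d$ are forced by $P$ while each non-pinnacle power in $w$ is at most $m-1$, which is exactly what $\omega_P$ achieves.

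For the interpolation, however, you have the total order backwards. In the paper's convention the chain reads
\[
\xi^{m-1}(n)\prec\cdots\prec\xi^{m-1}(1)\prec\xi^{m-2}(n)\prec\cdots\prec\xi^{0}(1)=1,
\]
so a \emph{higher} exponent on $\xi$ gives a \emph{smaller} element. Thus your assertion that ``$\xi^{i}(x)\prec\xi^{i+1}(x)$'' is false, and consequently lowering $\xi^{m-1}(v_i)$ to $\xi^{m-2}(v_i)$ makes that non-pinnacle value \emph{larger}, not smaller. This is exactly the dangerous direction: if the adjacent pinnacle is some $\xi^{m-1}(p_j)$ (which happens whenever $\pi_{m-1}(P)\neq\emptyset$), the modified non-pinnacle may now exceed it and destroy the pinnacle. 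So the step ``lowering only makes it smaller, the pinnacle inequalities are preserved'' fails, and with it your descent-from-$\omega_P$ strategy. Your side remark that in $\omega_P$ every non-pinnacle is flanked only by pinnacles is also inaccurate for the tail $\xi^{m-1}(v_{d+1})\cdots\xi^{m-1}(v_{n-d})$, though you flag this later.

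The paper goes in the opposite direction: starting from the given witness $w$ with non-pinnacle entries $\xi^{b_j}(v_j)$, it \emph{increases} selected exponents $b_j\mapsto c_j$ with $b_j\le c_j\le m-1$. Each such replacement makes the non-pinnacle value smaller, which is the safe direction for keeping the existing pinnacles intact, and by choosing the $c_j$ appropriately one realizes every value of $\varepsilon$ between $\varepsilon_w$ and $\varepsilon_{\omega_P}$. Once you correct the orientation of $\prec$, your first instinct (perturb $w$ upward one unit at a time) is essentially the paper's argument; the detour through $\omega_P$ is both unnecessary and, as written, broken.
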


\begin{proof}
Suppose $P = \{\xi^{a_i}(p_i)\}_{i=1}^d$ for some $a_i \in \{0,\dots,m-1\}$ and let $\{v_j\}_{j=1}^{n-d}=[n]\setminus|P|$. Then, since by construction the non-pinnacle values of $\omega_P$ are of the form $\xi^{m-1}(v_j)$ and any other witness $w$ of $P$ must have non-pinnacle values $\xi^{b_j}(v_j)$ for some $0\leq b_j\leq m-1$, then evidently 
\[\varepsilon_w = \sum a_i + \sum b_j \leq \sum a_i + (n-d)(m-1) = \varepsilon_{\omega_P}.\] 

Now, suppose that $\varepsilon_w < \varepsilon_{\omega_P}$ for some witness $w$ of $P$ with non-pinnacle values $\xi^{b_j}(v_j)$ as above. Then, any nontrivial assignment of the form $\xi^{b_j}(v_j) \mapsto \xi^{c_j}(v_j)$ for any $b_j\leq c_j \leq m-1$ yields another permutation $w'$ with pinnacle set $P$ and $\varepsilon_w < \varepsilon_{w'}<\varepsilon_{\omega_P}$. In particular, the values of $c_j$ can be chosen so that every value between $\varepsilon_w$ and $\varepsilon_{\omega_P}$ is obtained. 
\end{proof}

This result immediately yields a characterization of pinnacle sets in the complement of $\APS(m,p,n)$.

\begin{corollary}
A set $P \in \APS(m,1,n)\setminus\APS(m,p,n)$ if and only if for all $w \in \Z_m \wr S_{n}$ for which $\Pin(w)=P$ there exists no integer $k$ such that $\varepsilon_w \leq pk \leq \varepsilon_{\omega_P}$.
\end{corollary}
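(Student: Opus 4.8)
The plan is to reduce everything to Lemma~\ref{lem:epsilon-range}, which, together with the fact that $\omega_P$ is itself a witness, completely determines the set of attainable $\varepsilon$-values. Since $\omega_P$ appears in the statement, we take $P \in \APS(m,1,n) = \APS(m,n)$ as given (it is part of the left-hand side in any case), so that the content of the corollary is the equivalence $P \notin \APS(m,p,n) \iff \text{(RHS)}$. Set $W := \{\varepsilon_w \mid w \in \Z_m\wr S_n,\ \Pin(w)=P\}$, a nonempty subset of $\Z_{\geq 0}$, and let $\mu := \min W$ (which exists since $\varepsilon_w \geq 0$ always). The first step is to show $W = [\mu, \varepsilon_{\omega_P}]\cap\Z$: by Lemma~\ref{lem:epsilon-range} every element of $W$ is $\leq \varepsilon_{\omega_P}$, while $\varepsilon_{\omega_P}\in W$ because $\omega_P$ is a witness of $P$; if $\mu < \varepsilon_{\omega_P}$, applying the second assertion of Lemma~\ref{lem:epsilon-range} to a witness attaining $\mu$ shows that every integer strictly between $\mu$ and $\varepsilon_{\omega_P}$ also lies in $W$, and if $\mu = \varepsilon_{\omega_P}$ the description is immediate.

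The second step translates membership in $\APS(m,p,n)$ into a statement about $W$. By the definition of $G(m,p,n)$ as the subgroup of $w$ with $\varepsilon_w \equiv 0 \pmod p$, the set $P$ is admissible for $G(m,p,n)$ if and only if some witness $w$ of $P$ has $\varepsilon_w \equiv 0 \pmod p$, i.e.\ if and only if $W$ contains a multiple of $p$. Combined with the interval description of $W$ from the first step, this says $P \in \APS(m,p,n)$ if and only if there is an integer $k$ with $\mu \leq pk \leq \varepsilon_{\omega_P}$; equivalently, $P \notin \APS(m,p,n)$ if and only if no integer $k$ satisfies $\mu \leq pk \leq \varepsilon_{\omega_P}$.

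The third step reconciles this with the stated right-hand side, which ranges over all witnesses $w$ rather than over the minimizer. For the direction $P \notin \APS(m,p,n) \Rightarrow \text{RHS}$: given any witness $w$ we have $\varepsilon_w \geq \mu$, hence $\{pk : \varepsilon_w \leq pk \leq \varepsilon_{\omega_P}\} \subseteq \{pk : \mu \leq pk \leq \varepsilon_{\omega_P}\} = \emptyset$, so no such $k$ exists for $w$ either. For the converse $\text{RHS} \Rightarrow P \notin \APS(m,p,n)$, one simply applies the RHS to a witness attaining $\mu$, obtaining that no $k$ satisfies $\mu \leq pk \leq \varepsilon_{\omega_P}$. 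Combining the two steps yields the claimed equivalence. I anticipate no genuine obstacle: the only point that needs care is precisely this bookkeeping—that the universally quantified clause ``for all witnesses $w$, no $k$ with $\varepsilon_w \leq pk \leq \varepsilon_{\omega_P}$'' is insensitive to replacing $\varepsilon_w$ by its minimum $\mu$—which is exactly what the interval description of $W$ furnished by Lemma~\ref{lem:epsilon-range} provides.
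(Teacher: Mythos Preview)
Your proposal is correct and follows essentially the same line as the paper: both reduce to Lemma~\ref{lem:epsilon-range} to show that the set of $\varepsilon$-values attained by witnesses of $P$ is an integer interval with top endpoint $\varepsilon_{\omega_P}$, and then read off the equivalence with $P\notin\APS(m,p,n)$ from the definition of $G(m,p,n)$. Your version is somewhat more explicit---introducing $W$ and its minimum $\mu$ and verifying the interval description carefully---whereas the paper argues the two implications directly without naming $\mu$, but the underlying idea is identical.
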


\begin{proof}
If $P \in \APS(m,1,n)\setminus\APS(m,p,n)$ then there exists no witness $w$ of $P$ such that $\varepsilon_w =pk$ for any $k>0$. Since by Lemma \ref{lem:epsilon-range} every value between any two witnesses must be achieved, then there cannot exist a witness $w$ satisfying $\varepsilon_w \leq pk \leq \varepsilon_{\omega_P}$ for any $k$. 

Conversely, if for every witness $w$ of $P \in \APS(m,1,n)$ there exists no $k$ such that $\varepsilon_w=pk$ then $P \notin \APS(m,p,n)$.
\end{proof}

As it turns out, the set $\APS(m,1,n)\setminus\APS(m,p,n)$ is actually empty most of the time. 

\begin{theorem} \label{thm:APS-complex-equal}
    For any $0 \leq d  < \ceil{\frac{n-1}{2}}$ we have $\APS_d(m, p, n) = \APS_d(m, 1, n)$.
\end{theorem}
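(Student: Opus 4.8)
By the inclusion \eqref{eq:APSinclusion} it suffices to prove $\APS_d(m,1,n) \subseteq \APS_d(m,p,n)$, that is, every admissible pinnacle set $P$ of cardinality $d < \ceil{(n-1)/2}$ for $\Z_m \wr S_n$ admits a witness $w$ lying in $G(m,p,n)$, i.e. with $\varepsilon_w \equiv 0 \pmod p$. The plan is to use Lemma \ref{lem:epsilon-range}: the set of achievable values $\varepsilon_w$ over all witnesses $w$ of $P$ forms a (nonempty) interval of integers with right endpoint $\varepsilon_{\omega_P}$, so the claim reduces to showing that this interval has length at least $p-1$, i.e. that there exists \emph{some} witness $w$ with $\varepsilon_{\omega_P} - \varepsilon_w \geq p-1$; then one of the $p$ consecutive values $\varepsilon_w, \varepsilon_w+1, \dots, \varepsilon_{\omega_P}$ is divisible by $p$, and the corresponding witness lies in $G(m,p,n)$.

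First I would write $P = \{\xi^{a_1}(p_1) \prec \dots \prec \xi^{a_d}(p_d)\}$ and $[n]\setminus|P| = \{v_1 \prec \dots \prec v_{n-d}\}$, so $\varepsilon_{\omega_P} = \sum a_i + (n-d)(m-1)$ since all $n-d$ non-pinnacle values of $\omega_P$ carry exponent $m-1$. To build a witness with small $\varepsilon$, I would modify the canonical witness by \emph{lowering the exponents on as many non-pinnacle values as possible} down to $0$, exploiting the freedom in the construction of $\omega_P$. Concretely, $\omega_P$ places the non-pinnacle values $\xi^{m-1}(v_{d+1}), \dots, \xi^{m-1}(v_{n-d})$ as the increasing "tail" after the last pinnacle; since there are $n-2d \geq 2$ of them (here is where the hypothesis $d < \ceil{(n-1)/2}$ enters: it gives $2d \leq n-2$, so $n-2d \geq 2$), I can instead fill the tail with, say, $\xi^{0}(v_{n-d})\,\xi^{0}(v_{n-d-1})\cdots$ in \emph{decreasing} order of base value (so that it is still a strictly decreasing suffix in the order $\prec$, creating no new pinnacle), and likewise I would check whether the non-pinnacle values $v_i$ sitting between consecutive pinnacles can have their exponents reduced. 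The key point is that even reducing the exponents of just the $n-2d \geq 2$ tail non-pinnacle values from $m-1$ to $0$ decreases $\varepsilon$ by $(n-2d)(m-1) \geq 2(m-1) \geq m \geq p$ (for $m \geq 2$; the case $m = p = 1$ is trivial since $G(1,1,n) = S_n$). Hence $\varepsilon_{\omega_P} - \varepsilon_w \geq p$, so by Lemma \ref{lem:epsilon-range} every integer in $[\varepsilon_w, \varepsilon_{\omega_P}]$ is realized, in particular a multiple of $p$.

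The one technical point to verify carefully is that lowering the exponent on a tail non-pinnacle value genuinely produces a valid witness — i.e. that it does not accidentally create a new pinnacle or destroy an existing one. Since the tail is a suffix read left-to-right that must be strictly decreasing in $\prec$ (so that none of its entries except possibly the first becomes a pinnacle, and the preceding pinnacle is preserved), I would choose the reassigned exponents and base values so the suffix remains $\prec$-decreasing; the cleanest choice is to keep base values in the order $v_{d+1} > v_{d+2} > \dots$ reading left to right but with exponent $0$, so that $\xi^0(v_{d+1}) \succ \xi^0(v_{d+2}) \succ \cdots$, which is automatically decreasing, and check the junction with $\xi^{m-1}(v_d)$ or the last pinnacle — but since $\xi^{m-1}(\cdot) \prec \xi^0(\cdot)$ always, the junction $\xi^{m-1}(v_d) \prec \xi^0(v_{d+1})$ direction may need the neighboring pinnacle to still dominate, which it does as pinnacles are at the top of the order among the chosen values. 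I expect this bookkeeping — confirming the modified permutation has exactly pinnacle set $P$ — to be the main obstacle, though it is routine; an alternative that sidesteps it is to invoke Lemma \ref{lem:epsilon-range} more cleverly by producing the low-$\varepsilon$ witness as $\Psi_0$ of a witness over a smaller group, or by a direct explicit formula mirroring the "tight bound" permutation in the proof of Lemma \ref{lem:maxLength}.
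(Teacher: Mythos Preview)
Your overall strategy is the same as the paper's: both use $d < \ceil{(n-1)/2}$ to guarantee $n-2d \geq 2$, and then exploit the freedom in the tail of the canonical witness to hit a value of $\varepsilon$ divisible by $p$. However, the specific construction you propose has a genuine gap.

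You want to set all tail exponents to $0$, but this destroys the last pinnacle whenever $a_d > 0$. Concretely, the entry immediately to the right of $\xi^{a_d}(p_d)$ becomes some $\xi^0(v_j)$, and since in the order $\prec$ a lower exponent means a \emph{larger} element, we get $\xi^{a_d}(p_d) \prec \xi^0(v_j)$, so $\xi^{a_d}(p_d)$ is no longer a local maximum. Your justification that ``pinnacles are at the top of the order among the chosen values'' is exactly what fails here: a pinnacle carrying a positive exponent is $\prec$-smaller than \emph{every} value with exponent $0$. (Incidentally, the tail of $\omega_P$ is $\prec$-\emph{increasing} left to right, not decreasing as you assume, so your phrase ``still a strictly decreasing suffix'' is based on a misreading of Definition~\ref{def:witness}.)

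The fix is simpler than what you attempt, and is precisely what the paper does: modify \emph{only} the rightmost entry $\omega_P(1)=\xi^{m-1}(v_{n-d})$, replacing it by $\xi^{\ell}(v_{n-d})$ for the unique $\ell\in\{0,\dots,m-1\}$ making $\varepsilon$ a multiple of $p$. Because $n-2d\geq 2$, the neighbor $\omega_P(2)=\xi^{m-1}(v_{n-d-1})$ is also a tail (non-pinnacle) value, and for any $\ell$ one has $\xi^{\ell}(v_{n-d})\succeq\xi^{m-1}(v_{n-d})\succ\xi^{m-1}(v_{n-d-1})$, so no pinnacle is created or destroyed. This single modification already lets $\varepsilon$ range over an interval of length $m-1\geq p-1$; the detour through Lemma~\ref{lem:epsilon-range} and through lowering several exponents at once is unnecessary.
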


\begin{proof}
    Evidently, by \eqref{eq:APSinclusion} it suffices to prove that $\APS_d(m, 1, n) \subseteq \APS_d(m, p, n)$.

 So suppose $P \in \APS_d(m,1,n)$. If $\varepsilon_{\omega_P}=pk$ we're done. So suppose not and let $k\in \Z_{>0}$ be maximal such that $\varepsilon_{\omega_P} - pk> 0$. Since $d  < \ceil{\frac{n-1}{2}}$, it follows $n-2d\geq 2$. Thus, the canonical witness $\omega_P$ of $P$, with form given as in Definition \ref{def:witness}, satisfies the property that its tail $\xi^{m-1}(v_{d+1}) \dots \xi^{m-1}(v_{n-d})$ has length at least two. So let $w'$ be the permutation obtained from $\omega_P$ by replacing the final term $\xi^{m-1}(v_{n-d})$ with $\xi^{\ell}(v_{n-d})$ where $\ell:=(m-1)+pk -\varepsilon_{\omega_P}$. Since $0 < \varepsilon_{\omega_P}-pk < p \leq m$ then $ \varepsilon_{\omega_P} > pk >\varepsilon_{\omega_P}- (m-1)$ and so $m-1>\ell>0$.
Therefore, $w'$ is a well-defined witness for $P$ with 
$ \varepsilon_{w'} = \varepsilon_{\omega_P} - (m-1)+\ell= pk
$
and thus $P \in \APS_d(m,p,n)$.
\end{proof}

The bound $d  < \ceil{\frac{n-1}{2}}$ in Theorem \ref{thm:APS-complex-equal} highlights an important discrepancy that depends on the parity of $n$. Indeed, when $n$ is even since $\floor{\frac{n-1}{2}}=\ceil{\frac{n-1}{2}}-1$ then the maximal possible value for $d$ is achieved. Thus, we have the following corollary. 

\begin{corollary}\label{cor:APS-even}
    The equality 
   $\APS(m,p,2r)=\APS(m,1,2r)$ holds for all integers $m,p,r>0$.
\end{corollary}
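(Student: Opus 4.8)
The plan is to derive the corollary directly from Theorem~\ref{thm:APS-complex-equal} together with Lemma~\ref{lem:maxLength}, using the parity observation made in the paragraph preceding the statement. Concretely, when $n = 2r$ is even, one has $\floor{\frac{n-1}{2}} = \floor{\frac{2r-1}{2}} = r-1$, whereas $\ceil{\frac{n-1}{2}} = \ceil{\frac{2r-1}{2}} = r$. Hence every admissible $d$ satisfies $0 \le d \le r-1 < r = \ceil{\frac{n-1}{2}}$, so the hypothesis $d < \ceil{\frac{n-1}{2}}$ of Theorem~\ref{thm:APS-complex-equal} is automatically met for all relevant $d$.

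First I would record the maximal cardinality: by Lemma~\ref{lem:maxLength} applied with $n = 2r$, any witness permutation has at most $\floor{\frac{2r-1}{2}} = r-1$ pinnacles, so $\APS(m,p,2r) = \APS_{r-1}(m,p,2r)$ and likewise $\APS(m,1,2r) = \APS_{r-1}(m,1,2r)$; this uses that Lemma~\ref{lem:maxLength} descends to the subgroup $G(m,p,n)$ via the inclusions in \eqref{eq:APSinclusion}, as already noted after Definition~\ref{def:APS-ComplexRflGrp}. Then I would invoke Theorem~\ref{thm:APS-complex-equal} with $d = r-1$, which applies since $r-1 < r = \ceil{\frac{(2r)-1}{2}}$, to conclude $\APS_{r-1}(m,p,2r) = \APS_{r-1}(m,1,2r)$. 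Chaining these three equalities gives $\APS(m,p,2r) = \APS_{r-1}(m,p,2r) = \APS_{r-1}(m,1,2r) = \APS(m,1,2r)$, which is the claim.

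There is essentially no obstacle here — the corollary is a bookkeeping consequence of the theorem once the parity identity $\floor{\frac{2r-1}{2}} = \ceil{\frac{2r-1}{2}} - 1 = r-1$ is made explicit. The only point that warrants a sentence of care is confirming that the truncation $\APS(m,p,n) = \APS_{\floor{(n-1)/2}}(m,p,n)$ genuinely holds for the subgroup $G(m,p,n)$ and not merely for $\Z_m \wr S_n$; but this is exactly the remark following Definition~\ref{def:APS-ComplexRflGrp}, so it can simply be cited. I would also note in passing that the statement is vacuous-but-true when $r = 1$ (so $n = 2$): then both sides equal $\APS_0(m,p,2) = \{\emptyset\}$ by Example~\ref{ex:trivial}, consistent with the formula.

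Thus the write-up is short: state the parity computation, reduce both sides to their $d = r-1$ truncations via Lemma~\ref{lem:maxLength} and \eqref{eq:APSinclusion}, and apply Theorem~\ref{thm:APS-complex-equal}.
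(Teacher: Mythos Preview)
Your proposal is correct and follows essentially the same approach as the paper: the corollary is treated there as an immediate consequence of the parity observation $\floor{\frac{2r-1}{2}}=\ceil{\frac{2r-1}{2}}-1$ together with Theorem~\ref{thm:APS-complex-equal}, exactly as you outline. The only difference is that you spell out the chain of equalities and the use of Lemma~\ref{lem:maxLength} a bit more explicitly than the paper does.
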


On the other hand when $n$ is odd, since $ \ceil{\frac{n-1}{2}} = \floor{\frac{n-1}{2}}$, then the maximal value $d= \floor{\frac{n-1}{2}}$ is not within the range of Theorem \ref{thm:APS-complex-equal}. This case is much more complex than the previous situation so providing a closed enumeration has not been possible so far. Nonetheless, below we provide a substantial reduction of this problem to the much simpler situation when $m=p$. To explain this simplification we first need some additional properties that occur when $n$ is odd.

\begin{lemma} \label{lem:APScomplement}
    Suppose $P \in \APS(m,1,2r+1)$ and $m>p$. If $P \cap \I^{m-p}_{2r+1} \neq \emptyset$, then $P \in \APS(m,p,2r+1)$. 
\end{lemma}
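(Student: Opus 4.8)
The plan is to manufacture, from the canonical witness $\omega_P$, a second witness $w'$ of $P$ whose $\varepsilon$-value is smaller than $\varepsilon_{\omega_P}$ by at least $p-1$; then the $\ge p$ consecutive integers $\varepsilon_{w'},\varepsilon_{w'}+1,\dots,\varepsilon_{\omega_P}$, all realized as $\varepsilon$-values of witnesses of $P$ by Lemma \ref{lem:epsilon-range}, must contain a multiple $pk$ of $p$, and a witness with $\varepsilon$-value $pk$ lies in $G(m,p,2r+1)$, so $P\in\APS(m,p,2r+1)$.

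First I would dispose of the non-maximal case: if $\#P<r=\ceil{\frac{(2r+1)-1}{2}}$, then Theorem \ref{thm:APS-complex-equal} already gives $P\in\APS_{\#P}(m,p,2r+1)$; and when $r=0$ the hypothesis $P\cap\I^{m-p}_{2r+1}\neq\emptyset$ is vacuous, so we may assume $r\ge 1$. It remains to treat $d:=\#P=r$, where by Definition \ref{def:witness} the canonical witness $\omega_P$ has a tail $\xi^{m-1}(v_{d+1})\cdots\xi^{m-1}(v_{n-d})$ of length $n-2d=1$, i.e.\ a single terminal letter $\omega_P(1)=\xi^{m-1}(v_{n-d})$, and $\omega_P(2)=\xi^{a_d}(p_d)$ is a pinnacle.

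The hypothesis enters through the exponents: writing $P=\{\xi^{a_1}(p_1)\prec\cdots\prec\xi^{a_d}(p_d)\}$, the ordering $\prec$ forces $a_1\ge a_2\ge\cdots\ge a_d$, so $a_d$ is the smallest exponent; since some $\xi^{a_j}(p_j)\in P$ lies in $\I^{m-p}_{2r+1}$ we get $a_d\le a_j\le m-p-1$, and in particular $a_d\le m-2<m-1$. Now let $w'$ be obtained from $\omega_P$ by replacing its terminal letter with $\xi^{a_d+1}(v_{n-d})$ (a legitimate letter, since $0\le a_d+1\le m-p\le m-1$), leaving every other letter unchanged. I would check $w'$ is still a witness: position $1$ is never a peak, and the only peak condition involving the terminal letter is at position $2$, with pinnacle value $\xi^{a_d}(p_d)$; since $a_d+1>a_d$ we have $\xi^{a_d+1}(v_{n-d})\prec\xi^{a_d}(p_d)$, so position $2$ remains a peak and no other peak is affected, whence $\Pin(w')=P$. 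Then $\varepsilon_{\omega_P}-\varepsilon_{w'}=(m-1)-(a_d+1)=m-a_d-2\ge m-(m-p-1)-2=p-1$, and the argument concludes as described above via Lemma \ref{lem:epsilon-range}.

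The only delicate point is the bookkeeping around that single terminal letter in the maximal case—confirming that lowering its exponent to $a_d+1$ neither violates its own peak constraint (which needs $a_d\ne m-1$, guaranteed by $a_d\le m-p-1$ and $p\ge 1$) nor creates or destroys a peak elsewhere. The rest is a one-line exponent count plus a direct appeal to Lemma \ref{lem:epsilon-range}.
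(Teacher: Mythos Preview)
Your argument is correct and follows the same strategy as the paper: lower the $\xi$-exponent of a single non-pinnacle letter in the canonical witness to produce a second witness whose $\varepsilon$-value is at least $p-1$ below $\varepsilon_{\omega_P}$, then invoke Lemma~\ref{lem:epsilon-range} to find a witness with $\varepsilon\equiv 0\pmod p$.

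The only differences are cosmetic. You split off the case $\#P<r$ via Theorem~\ref{thm:APS-complex-equal}, which is harmless but not needed: the exponent-lowering trick works uniformly once one chooses the correct non-pinnacle. You also modify the \emph{terminal} non-pinnacle $v_{n-d}$ and correctly use that $a_d$ is the \emph{smallest} exponent among the pinnacles (so $a_d\le m-p-1$), whereas the paper modifies the \emph{initial} non-pinnacle $v_1$ and refers to $a_1$; your choice and bookkeeping are cleaner here, since it is indeed $a_d$ (adjacent to $v_{n-d}$) that is forced below $m-p$ by the hypothesis. Your drop $\varepsilon_{\omega_P}-\varepsilon_{w'}=m-a_d-2\ge p-1$ matches the paper's intended gap $(m-1)-(m-p)=p-1$, and in either version the resulting interval of $p$ consecutive $\varepsilon$-values contains a multiple of $p$.
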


\begin{proof}
    Suppose the canonical witness $\omega_P$ is as in Definition \ref{def:witness} so that $P = \{\xi^{a_i}(p_i)\}$. If $\xi^{a_i}(p_i) \in P \cap \I^{m-p}_{2r+1} $ for some $i$ then, since by construction $a_1<a_j$ for all $j$, we must have $a_1<m-p$. In particular, this implies that we may construct a new witness $w'$ for $P$ from $\omega_P$ by sending the non-pinnacle value $\xi^{m-1}(v_1) \mapsto \xi^{m-p}(v_1)$ and keeping all other entries the same. Since $\varepsilon_{\omega_P} - \varepsilon_w = p$ then by Lemma \ref{lem:epsilon-range} there must exists a witness $w'$ of $P$ such that 
    $\varepsilon_w\leq \varepsilon_{w'}\leq \varepsilon_{\omega_P}$ with $\varepsilon_{w'}=pk$ for some $k>0$. Thus, $P \in \APS(m,p,2r+1)$ as claimed.
\end{proof}

With this characterization in hand, we prove that the map $\Psi_{m-p}$ maps the complement of 
$\APS(p,p,2r+1)$ is surjective onto the complement of $\APS(m,p,2r+1)$ .

\begin{proposition}\label{prop:odd-reduction1} 
    For all $P \in \APS(m,1,2r+1) \setminus \APS(m,p,2r+1)$ there exists $P'  \in \APS(p,1,2r+1) \setminus \APS(p,p,2r+1)$ such that $P = \Psi_{m-p}(P')$.
\end{proposition}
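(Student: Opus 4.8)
The plan is to start from $P \in \APS(m,1,2r+1)\setminus\APS(m,p,2r+1)$ and produce the desired $P'$ by "pulling back" through the coloring shift $\Psi_{m-p}$. The first step is to observe that $P$ must be \emph{disjoint} from $\I^{m-p}_{2r+1}$. Indeed, if $P\cap \I^{m-p}_{2r+1}\neq\emptyset$, then Lemma \ref{lem:APScomplement} would force $P\in\APS(m,p,2r+1)$, contradicting our assumption. Hence $P\subseteq \bigcup_{i=m-p}^{m-1}\xi^{i}[2r+1] = \I^{m}_{2r+1}\setminus\I^{m-p}_{2r+1}$; in the notation of the paper, $\pi_i(P)=\emptyset$ for all $0\leq i<m-p$. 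By Corollary \ref{cor: APSimage}, since $P\in\APS_d(m,n)$ lies in $\mathcal P(\I^{m}_n\setminus\I^{m-p}_n)$ and equals $\Psi_{m-p}$ of something, there is a unique $P'\in\APS_d(p,1,2r+1)$ with $\Psi_{m-p}(P')=P$; explicitly, if $P=\{\zeta^{c_1+(m-p)}(x_1)\prec\dots\}$ with $\zeta=e^{2\pi i/m}$, then $P'=\{\xi^{c_1}(x_1)\prec\dots\}$ with $\xi=e^{2\pi i/p}$. Injectivity of $\Psi_{m-p}$ (Lemma \ref{lem:injective}) makes $P'$ well-defined, and Lemma \ref{lem: imageSize} preserves cardinality, so $P'\in\APS_d(p,1,2r+1)$.

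The second step is to show $P'\notin\APS(p,p,2r+1)$. I will argue by contrapositive: suppose $P'$ had a witness $u\in G(p,p,2r+1)\subseteq\Z_p\wr S_{2r+1}$, i.e. $\Pin(u)=P'$ and $\varepsilon_u\equiv 0\pmod p$. Apply $\psi_{m-p}$: by Theorem \ref{thm: witnessBiject}, $\psi_{m-p}(u)$ is a witness for $\Psi_{m-p}(P')=P$ in $\Z_m\wr S_{2r+1}$. I need to track how $\varepsilon$ changes under $\psi_{m-p}$: since $\psi_{m-p}$ raises every one of the $n=2r+1$ exponents by $m-p$, we get $\varepsilon_{\psi_{m-p}(u)}=\varepsilon_u+(2r+1)(m-p)$. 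Because $p\mid m$ and $p\mid\varepsilon_u$, we have $p\mid (2r+1)(m-p)$ as well, hence $p\mid \varepsilon_{\psi_{m-p}(u)}$, so $\psi_{m-p}(u)\in G(m,p,2r+1)$ is a witness for $P$, contradicting $P\notin\APS(m,p,2r+1)$. Therefore $P'\notin\APS(p,p,2r+1)$, and since $P'\in\APS(p,1,2r+1)$ (established above), we conclude $P'\in\APS(p,1,2r+1)\setminus\APS(p,p,2r+1)$, completing the proof.

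The main obstacle I anticipate is the very first step — arguing that $P$ avoids $\I^{m-p}_{2r+1}$ entirely — and making sure this is exactly what Lemma \ref{lem:APScomplement} delivers: that lemma is stated for a single $P\in\APS(m,1,2r+1)$ with $m>p$, so I should check $m>p$ is forced here (if $m=p$ the complement statement is vacuous since $\Psi_0$ is the identity and the claim is trivially $P\mapsto P$), and that the hypothesis "$P\cap\I^{m-p}_{2r+1}\neq\emptyset$" in Lemma \ref{lem:APScomplement} is genuinely the negation of what I need. A secondary care-point is the bookkeeping with two different primitive roots of unity ($\zeta$ of order $m$ versus $\xi$ of order $p$) when writing down $P'$ concretely, but this is precisely the content of Definition \ref{def:psi-map} and Lemma \ref{lem:injective}, so it should go through routinely. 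The divisibility computation $p\mid(2r+1)(m-p)$ is immediate from $p\mid m$, so the only real content is correctly invoking Lemma \ref{lem:APScomplement} and Theorem \ref{thm: witnessBiject}.
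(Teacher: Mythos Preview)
Your proposal is correct and follows essentially the same route as the paper's proof: both invoke Lemma~\ref{lem:APScomplement} (in contrapositive) to force $P\subseteq\I^m_{2r+1}\setminus\I^{m-p}_{2r+1}$, use Corollary~\ref{cor: APSimage} to produce the preimage $P'\in\APS(p,1,2r+1)$, and then argue by contradiction via Theorem~\ref{thm: witnessBiject} and the divisibility $p\mid(m-p)(2r+1)$ that $P'\notin\APS(p,p,2r+1)$. The paper also handles the trivial $m=p$ case separately, exactly as you do.
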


\begin{proof}
If $m=p$ then $\Psi_{m-p}$ is the identity map and the statement is trivial. So suppose $m>p$.
    By Lemma \ref{lem:APScomplement} if $P \in \APS(m,1,2r+1) \setminus \APS(m,p,2r+1)$ then $P = \{ \xi^{a_i}(p_i)\}_{i=1}^d$ for some $a_i\geq m-p$ for all $i$ where $\xi$ is a primitive $m^{th}$ root of unity. Letting $\zeta$ be any primitive $p^{th}$ root of unity, it follows from Corollary \ref{cor: APSimage} that the set $P' = \{ \zeta^{a_i-(m-p)}(p_i)\}_{i=1}^d$ is an admissible pinnacle set in $\APS(p,1,2r+1)$ for which $\Psi_{m-p}(P')=P$.
 So suppose $P' \in \APS(p,p,2r+1)$ with witness $w \in G(p,p,2r+1)$ so that $\varepsilon_w = pk$ for some $k>0$. Then, since $m=p\ell$ for some $\ell>1$ and by Theorem \ref{thm: witnessBiject} the permutation $\psi_{m-p}(w)$ is a witness for $P$, it follows that
 \[
\varepsilon_{\psi_{m-p}(w)} = \varepsilon_w +(m-p)(2r+1) = pk+p(\ell-1)(2r+1) \equiv 0 (mod \; p).
 \]
 Consequently, $P \in \APS(m,p,2r+1)$ which is a contradiction. Thus, $P'  \notin \APS(p,p,2r+1)$.
\end{proof}

The only thing needed to complete our enumeration is to prove that in fact the map on the complements is a bijection, which we now show.

\begin{theorem}\label{thm:odd-reduction}
For any positive integers $m,p,r$, we have that 
\[\Psi_{m-p}(\APS(p,1,2r+1) \setminus \APS(p,p,2r+1)) = \APS(m,1,2r+1) \setminus \APS(m,p,2r+1).\]
\end{theorem}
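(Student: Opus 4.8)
The plan is to establish the theorem by proving the two inclusions, with Proposition \ref{prop:odd-reduction1} already giving one of them. Indeed, Proposition \ref{prop:odd-reduction1} says precisely that every $P \in \APS(m,1,2r+1)\setminus\APS(m,p,2r+1)$ is of the form $\Psi_{m-p}(P')$ for some $P' \in \APS(p,1,2r+1)\setminus\APS(p,p,2r+1)$, which is exactly the inclusion
\[
\APS(m,1,2r+1)\setminus\APS(m,p,2r+1) \subseteq \Psi_{m-p}\big(\APS(p,1,2r+1)\setminus\APS(p,p,2r+1)\big).
\]
So the remaining task is the reverse inclusion: given $P' \in \APS(p,1,2r+1)\setminus\APS(p,p,2r+1)$, we must show $P := \Psi_{m-p}(P') \in \APS(m,1,2r+1)\setminus\APS(m,p,2r+1)$.

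First I would check that $P \in \APS(m,1,2r+1)$. This is immediate from Theorem \ref{thm: witnessBiject} (or Corollary \ref{cor: APSimage}): since $P'$ is admissible in $\Z_p \wr S_{2r+1}$ with canonical witness $\omega_{P'}$, the permutation $\psi_{m-p}(\omega_{P'})$ is the canonical witness for $\Psi_{m-p}(P')=P$ in $\Z_m \wr S_{2r+1}$, so $P$ is admissible there. The heart of the argument is showing $P \notin \APS(m,p,2r+1)$, i.e. that no witness of $P$ in $\Z_m \wr S_{2r+1}$ has $\xi$-power sum divisible by $p$. The key structural fact is that $P' \notin \APS(p,p,2r+1)$ means, via the corollary following Lemma \ref{lem:epsilon-range}, that the whole interval $[\varepsilon_w, \varepsilon_{\omega_{P'}}]$ of achievable power sums (over all witnesses $w$ of $P'$) contains no multiple of $p$; since by Lemma \ref{lem:epsilon-range} the set of achievable values is exactly this integer interval, and since $n=2r+1$ is odd so the minimal witness is forced, this interval has length less than $p$ and lies strictly between two consecutive multiples of $p$. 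I would then argue that applying $\Psi_{m-p}$ shifts every witness power sum by exactly $(m-p)(2r+1)$: for any witness $v$ of $P$ in $\Z_m \wr S_{2r+1}$, the ``un-shifted'' permutation obtained by lowering every entry's $\xi$-power by $m-p$ (which is legitimate precisely because $P \subseteq \I_{2r+1}^m \setminus \I_{2r+1}^{m-p}$ forces all pinnacle powers to be at least $m-p$, and one can choose non-pinnacle powers large as well — this needs care) is a witness of $P'$, and $\varepsilon_v = \varepsilon_{(\text{un-shifted})} + (m-p)(2r+1)$. Hence the achievable power sums for $P$ form the interval for $P'$ translated by $(m-p)(2r+1) = p(\ell-1)(2r+1)$, a multiple of $p$; translating an interval containing no multiple of $p$ by a multiple of $p$ yields another interval containing no multiple of $p$. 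Therefore $P \notin \APS(m,p,2r+1)$.

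The step I expect to be the main obstacle is the claim that every witness $v$ of $P$ in $\Z_m \wr S_{2r+1}$ descends to a witness of $P'$ after subtracting $m-p$ from each $\xi$-power — equivalently, that the achievable-power-sum interval for $P$ is genuinely a $(m-p)(2r+1)$-translate of that for $P'$ and not something larger. The subtlety is that a witness of $P$ in $\Z_m \wr S_{2r+1}$ could a priori have non-pinnacle values with $\xi$-powers smaller than $m-p$, which would not survive the shift down. To handle this I would invoke Lemma \ref{lem:APScomplement}: since $P \notin \APS(m,p,2r+1)$ we have $P \cap \I_{2r+1}^{m-p} = \emptyset$, so all pinnacle powers $a_i$ satisfy $a_i \geq m-p$; combined with Lemma \ref{lem:epsilon-range} — which says that from any witness one can freely raise non-pinnacle powers up to $m-1$ and every intermediate power sum is realized — it suffices to work with the canonical witness $\omega_P$ and the extremal witnesses, all of whose non-pinnacle powers can be taken $\geq m-p$, and then the shift-down is well-defined. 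Carefully, one shows the minimal achievable $\varepsilon$ for $P$ equals the minimal achievable $\varepsilon$ for $P'$ plus $(m-p)(2r+1)$ and likewise $\varepsilon_{\omega_P} = \varepsilon_{\omega_{P'}} + (m-p)(2r+1)$, which together with Lemma \ref{lem:epsilon-range}'s interval description pins down the full range and completes the proof.
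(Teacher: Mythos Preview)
Your overall strategy matches the paper's: Proposition~\ref{prop:odd-reduction1} gives one inclusion, and for the other you take $P'\in\APS(p,1,2r+1)\setminus\APS(p,p,2r+1)$, set $P=\Psi_{m-p}(P')$, observe $P\in\APS(m,1,2r+1)$ via Theorem~\ref{thm: witnessBiject}, and then try to show $P\notin\APS(m,p,2r+1)$ by arguing that any witness of $P$ shifts down under $\psi_{m-p}^{-1}$ to a witness of $P'$ whose $\varepsilon$ is also divisible by $p$. The problem is in how you justify the shift-down. Your appeal to Lemma~\ref{lem:APScomplement} is circular: you write ``since $P\notin\APS(m,p,2r+1)$ we have $P\cap\I_{2r+1}^{m-p}=\emptyset$,'' but $P\notin\APS(m,p,2r+1)$ is precisely the statement you are proving. (That $P\cap\I_{2r+1}^{m-p}=\emptyset$ is immediate from $P=\Psi_{m-p}(P')$ anyway, so the lemma is unnecessary here.) More seriously, knowing the \emph{pinnacle} powers are at least $m-p$ does not by itself bound the \emph{non-pinnacle} powers of an arbitrary witness, and your interval argument hinges on this: if some witness of $P$ had a non-pinnacle entry with $\xi$-power below $m-p$, then $\varepsilon_{\min}(P)$ could be strictly less than $\varepsilon_{\min}(P')+(m-p)(2r+1)$ and the intervals would not be translates of one another.

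The missing observation, which the paper uses, is that $P'\notin\APS(p,p,2r+1)$ forces $\#P'=r$: by Theorem~\ref{thm:APS-complex-equal} any $P'$ with $\#P'<r=\ceil{\frac{(2r+1)-1}{2}}$ already lies in $\APS(p,p,2r+1)$. Hence $\#P=r$ as well, so in any witness $w$ of $P$ the $r$ pinnacles and $r+1$ non-pinnacles must strictly alternate in the word of length $2r+1$, and every non-pinnacle $\xi^{b}(y)$ is adjacent to some pinnacle $\xi^{a}(x)$. The inequality $\xi^{b}(y)\prec\xi^{a}(x)$ then forces $b\ge a\ge m-p$. With this in hand $\psi_{m-p}^{-1}(w)$ is a well-defined witness of $P'$ and the contradiction is immediate from $\varepsilon_{\psi_{m-p}^{-1}(w)}=\varepsilon_w-(m-p)(2r+1)\equiv 0\pmod p$; no interval bookkeeping is needed.
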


\begin{proof}
Once again, if $m=p$ the statement is trivial, so assume $m>p$.
    By Proposition \ref{prop:odd-reduction1} it suffices to prove the inclusion 
    \[\Psi_{m-p}(\APS(p,1,2r+1) \setminus \APS(p,p,2r+1)) \subseteq \APS(m,1,2r+1) \setminus \APS(m,p,2r+1).\]
So assume $P' \in \APS(p,1,2r+1) \setminus \APS(p,p,2r+1)$. If $P = \Psi_{m-p}(P') \in \APS(m,p,2r+1)$ then there exists a witness $w$ for $P$ with $\varepsilon_w \equiv 0 \;(mod \;p)$. As in Proposition 
\ref{prop:odd-reduction1}, let $\xi$ and $\zeta$ be primitive $m^{th}$ and $p^{th}$ roots of unity, respectively. 
Suppose $w$ has pinnacles $P = \{\xi^{a_i}(x_i)\}$ and non-pinnacles $\{ \xi^{b_i}(y_i)\}$, so that $P' = \{ \zeta^{a_i-(m-p)}(x_i)\}$ and thus $b_i \geq a_i \geq m-p$ for all $i$.

In particular, this implies that the preimage $w'  = \psi_{m-p}^{-1}(w)\in \Z_p \wr S_{2r+1}$ of $w$ under $\psi_{m-p}$, is well defined and is a witness for $P'$. However, this means $w'$ satisfies the relation 
\[
\varepsilon_{w'}=\varepsilon_w - (m-p)(2r+1) \equiv 0 \; (mod \; p),
\]
 which contradicts the fact that $P' \notin \APS(p,p,2r+1)$.  
\end{proof}

From Theorem \ref{thm:odd-reduction} the following is now immediate. 

\begin{corollary}\label{cor:reduction}
For any $m,p,r>0$ with $m=pk$ for some $k>0$ we have that 
\[\#\APS(m,p,2r+1) = \#\APS(p,p,2r+1) + \sum_{i=0}^r\binom{2r+1}{i}p^i(k^i-1)(-1)^{i+r}.\]
\end{corollary}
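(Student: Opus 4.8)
The plan is to combine Theorem~\ref{thm:odd-reduction} with the closed formula of Theorem~\ref{thm:pinn-formula}. First I would use Theorem~\ref{thm:odd-reduction}, which asserts that $\Psi_{m-p}$ restricts to a bijection between $\APS(p,1,2r+1) \setminus \APS(p,p,2r+1)$ and $\APS(m,1,2r+1) \setminus \APS(m,p,2r+1)$. Since $\Psi_{m-p}$ is injective (Lemma~\ref{lem:injective}), taking cardinalities of both sides gives
\[
\#\APS(m,1,2r+1) - \#\APS(m,p,2r+1) = \#\APS(p,1,2r+1) - \#\APS(p,p,2r+1).
\]
Rearranging, this yields
\[
\#\APS(m,p,2r+1) = \#\APS(p,p,2r+1) + \Big( \#\APS(m,1,2r+1) - \#\APS(p,1,2r+1)\Big).
\]

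The second step is to evaluate the difference $\#\APS(m,1,2r+1) - \#\APS(p,1,2r+1)$ using the closed formula. Recall $\#\APS(m,1,n) = \#\APS(m,n) = \p_{m,n}(\floor{\frac{n-1}{2}})$ by the filtration \eqref{eq:APSfiltration2}, and for $n = 2r+1$ we have $\floor{\frac{n-1}{2}} = r$. Applying Theorem~\ref{thm:pinn-formula} with $n = 2r+1$ and $d = r$ to each term,
\[
\#\APS(m,1,2r+1) - \#\APS(p,1,2r+1) = \sum_{i=0}^r \binom{2r+1}{i} m^i (-1)^{i+r} - \sum_{i=0}^r \binom{2r+1}{i} p^i (-1)^{i+r} = \sum_{i=0}^r \binom{2r+1}{i}(m^i - p^i)(-1)^{i+r}.
\]
Finally, substituting $m = pk$ gives $m^i - p^i = p^i k^i - p^i = p^i(k^i - 1)$, so the difference becomes $\sum_{i=0}^r \binom{2r+1}{i} p^i(k^i-1)(-1)^{i+r}$, which is exactly the claimed correction term. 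Combining the two displays completes the proof.

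There is essentially no obstacle here: the statement is a formal consequence of the two cited results, and the only work is the elementary algebraic manipulation $m^i - p^i = p^i(k^i-1)$ together with bookkeeping of the indices $n = 2r+1$, $d = r$. The one point requiring a sentence of care is confirming that $\#\APS(m,1,2r+1)$ genuinely equals $\p_{m,2r+1}(r)$, i.e. that the maximal cardinality $r$ is attained so that the union in Definition~\ref{def:APS} stabilizes at $d = r$; this is precisely the content of the refined filtration \eqref{eq:APSfiltration2} together with the remark that $\APS_d(m,n)$ is nonempty for all $d \le \floor{\frac{n-1}{2}}$.
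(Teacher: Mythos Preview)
Your proof is correct and follows essentially the same route as the paper: both combine the injectivity of $\Psi_{m-p}$ (Lemma~\ref{lem:injective}) with Theorem~\ref{thm:odd-reduction} to obtain $\#\APS(m,p,2r+1) = \#\APS(p,p,2r+1) + \#\APS(m,1,2r+1) - \#\APS(p,1,2r+1)$, and then invoke Theorem~\ref{thm:pinn-formula} to evaluate the difference. Your write-up is in fact slightly more explicit, spelling out the substitution $m^i - p^i = p^i(k^i-1)$ and the identification $\#\APS(m,1,2r+1) = \p_{m,2r+1}(r)$; the only superfluous remark is the final one about nonemptiness, since $\APS_d(m,n)$ is by definition the collection of admissible sets of cardinality at most $d$, so $\#\APS(m,n) = \p_{m,n}(\lfloor(n-1)/2\rfloor)$ follows immediately from \eqref{eq:APSfiltration2} without any appeal to nonemptiness.
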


\begin{proof}
Evidently, if $m=p$ the statement is trivial, so suppose $m>p$.
    Since by Lemma \ref{lem:injective} we know that $\Psi_{m-p}$ is injective, then from Theorem \ref{thm:odd-reduction} it follows that 
    \[
    \# \APS(m,p,2r+1) =\#\APS(p,p,2r+1)+\#\APS(m,1,2r+1)-\#\APS(p,1,2r+1),\]
    which by Theorem \ref{thm:pinn-formula} yields the result.
\end{proof}

In \cite[Theorem 4.9]{PinSignedPermutations} it is shown that 
\begin{equation}\label{eq:D-enumeration}
\# \APS(2,2,2r+1) = \#\APS(2,1,2r+1)
+\frac{1}{2}\#\APS(1,1,2r+1)
\end{equation}
and from this derive a closed formula for the number of admissible pinnacle sets for the type $D_n$ signed symmetric group $G(2,2,2r+1)$. Their result relies on a parity condition that holds only when $m=p=2$. While we do not pursue it here, our hope is that by appealing to some higher symmetries in $G(p,p,2r+1)$ the set $\APS(p,p,2r+1)$ also satisfies a relation of the form in Equation \eqref{eq:D-enumeration} and may thus be enumerated using the results herein. 

\bibliography{references}{}

\newcommand{\etalchar}[1]{$^{#1}$}
\begin{thebibliography}{CVDLO{\etalchar{+}}17}

\bibitem[BBS13]{BBS}
Sara Billey, Krzysztof Burdzy, and Bruce~E. Sagan.
\newblock Permutations with given peak set.
\newblock {\em J. Integer Seq.}, 6(16), 2013.

\bibitem[CVDLO{\etalchar{+}}17]{CDOPZ}
F.~Castro-Velez, A.~Diaz-Lopez, R.~Orellana, J.~Pastrana, and R.~Zevallos.
\newblock Number of permutations with same peak set for signed permutations.
\newblock {\em J. of Comb.}, 8(4):631–652, 2017.

\bibitem[DLEH{\etalchar{+}}19]{DEHIMO}
Alexander Diaz-Lopez, Lucas Everham, Pamela~E. Harris, Erik Insko, Vincent
  Marcantonio, and Mohamed Omar.
\newblock Counting peaks on graphs.
\newblock {\em Australas. J Comb.}, 75:174--189, 2019.

\bibitem[DLHH{\etalchar{+}}21]{DHHIN}
Alexander Diaz-Lopez, Pamela~E. Harris, Isabella Huang, Erik Insko, and Lars
  Nilsen.
\newblock A formula for enumerating permutations with a fixed pinnacle set.
\newblock {\em Discrete Math.}, 344:112375, 2021.

\bibitem[DLHI{\etalchar{+}}19]{DHIOS}
Alexander Diaz-Lopez, Pamela~E. Harris, Erik Insko, Mohamed Omar, and Bruce~E.
  Sagan.
\newblock Descent polynomials.
\newblock {\em Discrete Math.}, 342(6):1674--1686, 2019.

\bibitem[DLHIPL17]{DHIP}
Alexander Diaz-Lopez, Pamela~E. Harris, Erik Insko, and Darleen Perez-Lavin.
\newblock Peak sets of classical coxeter groups.
\newblock {\em Involve}, 10(2):263–290, 2017.

\bibitem[DLM{\etalchar{+}}22]{DLMSSS}
Rachel Domagalski, Jinting Liang, Quinn Minnich, Bruce~E. Sagan, Jamie Schmidt,
  and Alexander Sietsema.
\newblock Pinnacle set properties.
\newblock {\em Discrete Mathematics}, 345(7):112882, 2022.

\bibitem[DNKPT18]{PinnaclesTypeA}
Robert Davis, Sarah~A. Nelson, T.~Kyle~Petersen, and Bridget~E. Tenner.
\newblock The pinnacle set of a permutation.
\newblock {\em Discrete Math.}, 341(11):3249--3270, 2018.

\bibitem[Fan22]{fang}
Wenjie Fang.
\newblock Efficient recurrence for the enumeration of permutations with fixed
  pinnacle set.
\newblock {\em Disc.~Math.~Theoret.~Comp.~Sci.}, 24(8), 2022.

\bibitem[FNT21]{FNT}
Justine Falque, Jean-Christophe Novelli, and Jean-Yves Thibon.
\newblock Pinnacle sets revisited.
\newblock {\em Preprint arXiv:2106.05248}, 2021.

\bibitem[GHR{\etalchar{+}}23]{PinSignedPermutations}
Nicolle González, Pamela~E. Harris, Gordon {Rojas Kirby}, Mariana {Smit Vega
  Garcia}, and Bridget~Eileen Tenner.
\newblock Pinnacle sets of signed permutations.
\newblock {\em Discrete Mathematics}, 346(7):113439, 2023.

\bibitem[GHRK{\etalchar{+}}25]{MesasStirling}
Nicolle González, Pamela Harris, Gordon Rojas~Kirby, Mariana Smit Vega~Garcia,
  and Bridget Tenner.
\newblock Mesas of stirling permutations.
\newblock {\em J. of Combinatorics}, pages 387--403, Vol. 16, Issue 3 2025.

\bibitem[GKP94]{ConcreteMathematics}
Ronald~L. Graham, Donald~Ervin Knuth, and Oren Patashnik.
\newblock {\em Concrete Mathematics: A Foundation for Computer Science}.
\newblock Addison-Wesley, Reading, MA, second edition, 1994.

\bibitem[Min23]{Minnich}
Quinn Minnich.
\newblock Further results on pinnacle sets.
\newblock {\em Discrete Math.}, 346(4), 2023.

\bibitem[{OEI}25]{OEIS}
{OEIS Foundation Inc.}
\newblock The {O}n-{L}ine {E}ncyclopedia of {I}nteger {S}equences, 2025.
\newblock Published electronically at \url{http://oeis.org}.

\bibitem[RT21]{rusu-tenner}
Irena Rusu and Bridget~Eileen Tenner.
\newblock Admissible pinnacle orderings.
\newblock {\em Graphs and Comb.}, 37:1205--1214, 2021.

\bibitem[Rus20]{rusu}
Irena Rusu.
\newblock Sorting permutations with fixed pinnacle set.
\newblock {\em Electron.~J.~Comb.}, 27, 2020.

\bibitem[She53]{Shep53}
G.~C. Shephard.
\newblock Unitary groups generated by reflections.
\newblock {\em Canadian Journal of Mathematics}, 5:364–383, 1953.

\bibitem[ST54]{Shephard_Todd_1954}
G.~C. Shephard and J.~A. Todd.
\newblock Finite unitary reflection groups.
\newblock {\em Canadian Journal of Mathematics}, 6:274–304, 1954.

\end{thebibliography}
\bibliographystyle{alpha}
\end{document}